\documentclass[letterpaper,11pt,oneside,reqno]{amsart}
\usepackage{comment}
\usepackage[english]{babel}%
\usepackage{amsmath,amssymb,amsthm,amsfonts}%
\usepackage{hyperref}%
\usepackage{enumerate}%
\usepackage{array}%
\usepackage{graphicx}
\usepackage[mathscr]{euscript}
\synctex=1
\usepackage{color,tikz}
\usetikzlibrary{calc}
\usetikzlibrary{patterns}
\usetikzlibrary{arrows}
\usetikzlibrary{decorations.pathreplacing}
\usetikzlibrary{decorations.pathmorphing}
\usepackage[DIV15]{typearea}
\usepackage[width=.9\textwidth]{caption}
\allowdisplaybreaks%
\numberwithin{equation}{section}%

\DeclareGraphicsRule{.tif}{png}{.png}{`convert #1 `basename
#1.tif`.png}

\begin{document}

\newcommand{\VI}{\mathbb{V}}
\newcommand{\VJ}{\mathbb{H}}
\newcommand{\al}{\alpha}
\newcommand{\be}{\beta}
\newcommand{\de}{\delta}

\bibliographystyle{alpha}
\newcommand{\fredg}{\ensuremath{\mathsf{g}}}
\newcommand{\e}{\epsilon}
\newcommand{\GG}{\ensuremath{\mathsf{G}}}
\newcommand{\GGhat}{\ensuremath{\mathsf{\widehat{G}}}}
\newcommand{\HH}{\ensuremath{\mathsf{H}}}
\newcommand{\EE}{\ensuremath{\mathbb{E}}}
\newcommand{\PP}{\ensuremath{\mathbb{P}}}
\newcommand{\R}{\ensuremath{\mathbb{R}}}
\newcommand{\Rplus}{\ensuremath{\mathbb{R}_{+}}}
\newcommand{\C}{\ensuremath{\mathbb{C}}}
\newcommand{\Z}{\ensuremath{\mathbb{Z}}}
\newcommand{\N}{\ensuremath{\mathbb{N}}}
\newcommand{\Q}{\ensuremath{\mathbb{Q}}}
\newcommand{\MM}{\ensuremath{\mathbb{M}}}
\newcommand{\Real}{\ensuremath{\mathrm{Re}}}
\newcommand{\Imag}{\ensuremath{\mathrm{Im}}}
\newcommand{\re}{\ensuremath{\mathrm{Re}}}
\newcommand{\la}{\ensuremath{\lambda}}
\newcommand{\bernw}[4]{\ensuremath{L^{(#1)}_{#4}}}
\newcommand{\bernwtilde}[4]{\ensuremath{\tilde{L}^{(#1)}_{#4}}}
\newcommand{\bernwbeta}[4]{\ensuremath{L^{#1}_{#4}}}
\newcommand{\bernwtensor}[5]{\ensuremath{\big(L^{(#1)}_{#4}\big)^{\otimes_{#2}#5}}}
\newcommand{\Borodinw}[4]{\ensuremath{w^{(#1)}_{#4}}}
\newcommand{\barBorodinw}[4]{\ensuremath{\bar{w}^{(#1)}_{#4}}}
\newcommand{\phiup}[4]{\ensuremath{p^{(#1)}_{#4}}}
\newcommand{\qHahnHR}[3]{\ensuremath{\mathcal{H}^{#3}}}
\newcommand{\qHahnH}[3]{\ensuremath{\widetilde{\mathcal{H}}^{#3}}}
\newcommand{\qHahnA}[3]{\ensuremath{\widetilde{\mathcal{H}}^{#3}}}
\newcommand{\highspinTASEP}[2]{\ensuremath{\mathcal{T}^{#1,#2}}}
\newcommand{\highspinBoson}[2]{\ensuremath{\mathcal{B}^{#1,#2}}}
\newcommand{\revhighspinBoson}[2]{\ensuremath{\mathcal{\tilde{B}}^{#1,#2}}}
\newcommand{\Xinf}{\ensuremath{\mathbb{X}}}
\newcommand{\Xspace}[1]{\ensuremath{\mathbb{X}}^{#1}}
\newcommand{\Wc}{\mathcal{W}} 
\newcommand{\tWc}{\widetilde{\mathcal{W}}} 
\newcommand{\Cc}{\mathcal{C}} 

\newcommand{\Nup}{\ensuremath{\mathsf{N}^{\uparrow}}}
\newcommand{\Ndown}{\ensuremath{\mathsf{N}^{\downarrow}}}
\newcommand{\Ginf}{\mathbb{G}}
\newcommand{\Yinf}{\mathbb{Y}}
\newcommand{\Gspace}[1]{\mathbb{G}^{#1}}
\newcommand{\Yspace}[1]{\mathbb{Y}^{#1}}
\newcommand{\Gmspace}[1]{\mathbb{WG}^{#1}}
\newcommand{\Ynspace}[1]{\mathbb{WY}^{#1}}
\newcommand{\Psibwd}{\Psi^\mathrm{bwd}}
\newcommand{\Psicfwd}{\Psi^\mathrm{cfwd}}
\newcommand{\Psifwd}{\Psi^\mathrm{fwd}}
\newcommand{\Psir}{\Psi^{r}}
\newcommand{\Psird}{\Psi^{r;\dilp}}
\newcommand{\Psil}{\Psi^{\ell}}
\newcommand{\Psild}{\Psi^{\ell;\dilp}}

\newcommand{\Lmat}{\mathrm{L}}

\newcommand{\Phir}{\Phi^{r}}
\newcommand{\Phird}{\Phi^{r;\dilp}}
\newcommand{\Phil}{\Phi^{\ell}}
\newcommand{\Phild}{\Phi^{\ell;\dilp}}

\newcommand{\st}{\mathfrak{m}_{q,\nu}} 

\newcommand{\Pld}{\mathcal{F}^{q,\nu}} 
\newcommand{\Pli}{\mathcal{J}^{q,\nu}} 
\newcommand{\Plspatial}{\mathcal{K}^{q,\nu}} 
\newcommand{\Plspectral}{\mathcal{M}^{q,\nu}} 

\newcommand{\Plde}{\mathcal{F}} 
\newcommand{\Plie}{\mathcal{J}} 
\newcommand{\Plspatiale}{\mathcal{K}} 
\newcommand{\Plspectrale}{\mathcal{M}} 
\newcommand{\Plm}{\mathsf{m}} 
\newcommand{\ga}{\boldsymbol\gamma}
\newcommand{\Vand}{\mathbf{V}}

\newcommand{\alqr}{\mathsf{a}}
\newcommand{\beqr}{\mathsf{b}}
\newcommand{\gammaqr}{\mathsf{c}}
\newcommand{\deqr}{\mathsf{d}}

\def \Ai {{\rm Ai}}
\def \sgn {{\rm sgn}}
\newcommand{\var}{{\rm var}}
\newcommand{\I}{{\rm i}}
\renewcommand{\i}{\mathbf i}
\newtheorem{theorem}{Theorem}[section]
\newtheorem{partialtheorem}{Partial Theorem}[section]
\newtheorem{conj}[theorem]{Conjecture}
\newtheorem{lemma}[theorem]{Lemma}
\newtheorem{proposition}[theorem]{Proposition}
\newtheorem{corollary}[theorem]{Corollary}
\newtheorem{claim}[theorem]{Claim}
\newtheorem{experiment}[theorem]{Experimental Result}

\newcommand{\Zsd}{\ensuremath{\mathbf{Z}}}
\newcommand{\Fsd}{\ensuremath{\mathbf{F}}}

\def\todo#1{\marginpar{\raggedright\footnotesize #1}}
\def\change#1{{\color{green}\todo{change}#1}}
\def\note#1{\textup{\textsf{\color{blue}(#1)}}}

\theoremstyle{definition}
\newtheorem{rem}[theorem]{Remark}

\theoremstyle{definition}
\newtheorem{com}[theorem]{Comment}

\theoremstyle{definition}
\newtheorem{definition}[theorem]{Definition}

\theoremstyle{definition}
\newtheorem{definitions}[theorem]{Definitions}
\pagestyle{plain}

\title[Stochastic higher spin vertex models on the line]{Stochastic higher spin vertex models on the line}

\author[I. Corwin]{Ivan Corwin}
\address{I. Corwin, Columbia University,
Department of Mathematics,
2990 Broadway,
New York, NY 10027, USA,
and Clay Mathematics Institute, 10 Memorial Blvd. Suite 902, Providence, RI 02903, USA,
and
Institut Henri Poincar\'e,
11 Rue Pierre et Marie Curie, 75005 Paris, France}
\email{ivan.corwin@gmail.com}

\author[L. Petrov]{Leonid Petrov}
\address{L. Petrov, University of Virginia, Department of Mathematics,
141 Cabell Drive, Kerchof Hall,
P.O. Box 400137,
Charlottesville, VA 22904-4137, USA,
and Institute for Information Transmission Problems, Bolshoy Karetny per. 19, Moscow, 127994, Russia}
\email{lenia.petrov@gmail.com}

\date{\today}
\maketitle

\begin{abstract}
We introduce a four-parameter family of interacting particle systems on the line which can be diagonalized explicitly via a complete set of Bethe ansatz eigenfunctions, and which enjoy certain Markov dualities. Using this, for the systems started in step initial data we write down nested contour integral formulas for moments and Fredholm determinant formulas for Laplace-type transforms. Taking various choices or limits of parameters, this family degenerates to many of the known exactly solvable models in the Kardar-Parisi-Zhang universality class, as well as leads to many new examples of such models. In particular, ASEP, the stochastic six-vertex model, $q$-TASEP and various directed polymer models all arise in this manner. Our systems are constructed from stochastic versions of the $R$-matrix related to the six-vertex model. One of the key tools used here is the fusion of $R$-matrices and we provide a probabilistic proof of this procedure.
\end{abstract}

\setcounter{tocdepth}{1}
\tableofcontents
\setcounter{tocdepth}{2}

\section{Introduction}

Integrable probability is an active area of research at the interface of probability/mathematical physics/statistical mechanics on the one hand, and representation theory/integrable systems on the other. Integrable probabilistic systems are broadly characterized by two properties:
\begin{enumerate}
\item It is possible to write down concise and exact formulas for expectations of a variety of interesting observables of the system.
\item Asymptotics of the system, observable and associated formulas provide access to exact descriptions of new phenomena as well as large universality classes.
\end{enumerate}

In light of these properties, there are two main goals in this area:
\begin{enumerate}
\item Build bridges between algebraic structures and probabilistic systems and in so doing, discover new integrable probabilistic systems and new tools by which to analyze them.
\item Study scaling limits of these integrable probabilistic systems and in so doing, expand and refine the scope of their associated universality classes and discover new asymptotic phenomena displayed by these systems.
\end{enumerate}

In this paper we work to advance the first goal. We develop a four-parameter family of stochastic interacting particle systems which are built off of higher spin representations of the six-vertex model $R$-matrix. These systems benefit from properties inherited from the $R$-matrix. In particular, they are diagonalizable explicitly in terms of a complete set of Bethe ansatz eigenfunctions, and they also enjoy certain Markov dualities. We use these two facts to compute moment and then Laplace-type transform formulas for these processes. Asymptotics of these systems and the associated exact formulas are in line with the second goal defined above. We do not pursue this here, but note that such asymptotics have previously been performed on various degenerations of this family of systems \cite{BorodinCorwin2011Macdonald,BorodinCorwinFerrari2012,BorodinCorwinRemenik,BorodinCorwinFerrariVeto2014,CorwinSeppalainenShen2014,OConnellOrthmann2014,Barraquand_qTASEP_2014,Veto2014qhahn,BCG6V}. In all of those cases, the resulting phenomena were that of the Kardar-Parisi-Zhang (KPZ) universality class. It would be quite interesting to see whether other phenomena can be accessed beyond that of the KPZ class.

Vertex models and more generally quantum integrable systems have long been objects of intense research within mathematics and physics (see, for example, the reviews \cite{Fadeev1996,Reshetikhin2008}). Generally, $R$-matrices are not stochastic, and nor are their associated transfer matrices. Here we work with a variant of the $R$-matrix which is stochastic and arises from conjugating the associated transfer matrix. We call this variant our $\Lmat$-matrix (see Remark \ref{rem:BorR} for the relation to the usual $R$-matrix). This enables us to define our vertex models on the entire line such as in \cite{BCG6V}. The associated transfer matrix is Bethe ansatz diagaonalizable, as follows either from taking a limit of the finite lattice algebraic Bethe ansatz or from the recent work \cite{Borodin2014vertex}. Moreover, as opposed to on a finite lattice, there are relatively simple direct and inverse transforms with respect to these Bethe ansatz eigenfunctions \cite{BCPS2014} (see also Appendix \ref{sec:Bethe}). Our proofs of duality are largely based on earlier methods from \cite{BorodinCorwin2013discrete,Corwin2014qmunu}. In both cases (diagonalization and duality) we first prove our results for the horizontal spin $1/2$ ($J=1$) and arbitrary vertical spin $\Lmat$-matrix. Then we construct higher horizontal spin $\Lmat$-matrices ($J\in \Z_{\geq 1}$) via Markov functions theory, and the diagonalization and duality easily extends. This construction provides a probabilistic proof of the fusion procedure \cite{KirillovReshetikhin1987Fusion} on the line. Though initially the form of the $\Lmat$-matrix we find is not very explicit, we are able to demonstrate a recursion relation it satisfies, and then explicitly solve that in terms of $q$-Racah polynomials \cite{Mangazeev2014}.

We also have a more concrete motivation behind this work, which we now explain. The totally asymmetric simple exclusion process (TASEP) is arguably the paradigm for integrable stochastic interacting particle systems. The asymmetric simple exclusion process\footnote{Particles jump left with rate (i.e., according to independent exponentially distributed waiting times of rate) $p$ and right at rate $q$, assuming the destination is not already occupied.} (ASEP) and $q$-deformed totally asymmetric simple exclusion process\footnote{Particles jump one to the right with rate given by $1-q^{{\rm gap}}$, where ${\rm gap}$ represents the number of holes before the next particle.} ($q$-TASEP) are both one-parameter generalizations of TASEP\footnote{TASEP arises from ASEP by setting either $p$ or $q$ to zero, and from $q$-TASEP by setting $q$ to zero.}. In \cite{BorodinCorwinSasamoto2012} it was recognized that both systems enjoy Markov dualities and have moment formulas which can be written in terms of nested contour integrals. This prompted the question of whether both systems and their associated results can be united as special cases of a more general integrable probabilistic system. In \cite{BCPS2014} this question was addressed at a spectral level -- it was shown that the eigenfunctions which diagonalize both systems are special cases of a more general set of eigenfunctions (recalled in Appendix~\ref{sec:Bethe}). In this present work we provide a complete answer to this question in the affirmative. The four-parameter family of systems we introduce here have degenerations\footnote{Saying that one system degenerates to another will mean that the second is accessed from the first either through a special choice of parameters, or through a limit transition.} to both ASEP and $q$-TASEP, at the level of their transition operators. In other words, not only do the eigenfunctions degenerate, but so do the eigenvalues.

In recent work, \cite{CarinciGiardinaRedigSasamoto2014} discovered a family of higher spin particle systems which interpolate from ASEP (spin $1/2$) to $q$-TASEP (spin infinity) and proved dualities for these systems using $U_q(\mathfrak{sl}_2)$ symmetry. This two-parameter family of systems\footnote{Spin and $q$ being the two parameters.} does not appear to be diagonalizable via Bethe ansatz except for the cases of ASEP and $q$-TASEP. Hence it is unclear whether many of the nice properties of ASEP and $q$-TASEP hold for the systems in between them. Besides the cases of ASEP and $q$-TASEP, the systems and dualities proved in \cite{CarinciGiardinaRedigSasamoto2014} appear to be different than any degenerations of our four-parameter family of systems.

\begin{figure}
\begin{center}
\includegraphics[scale=.85]{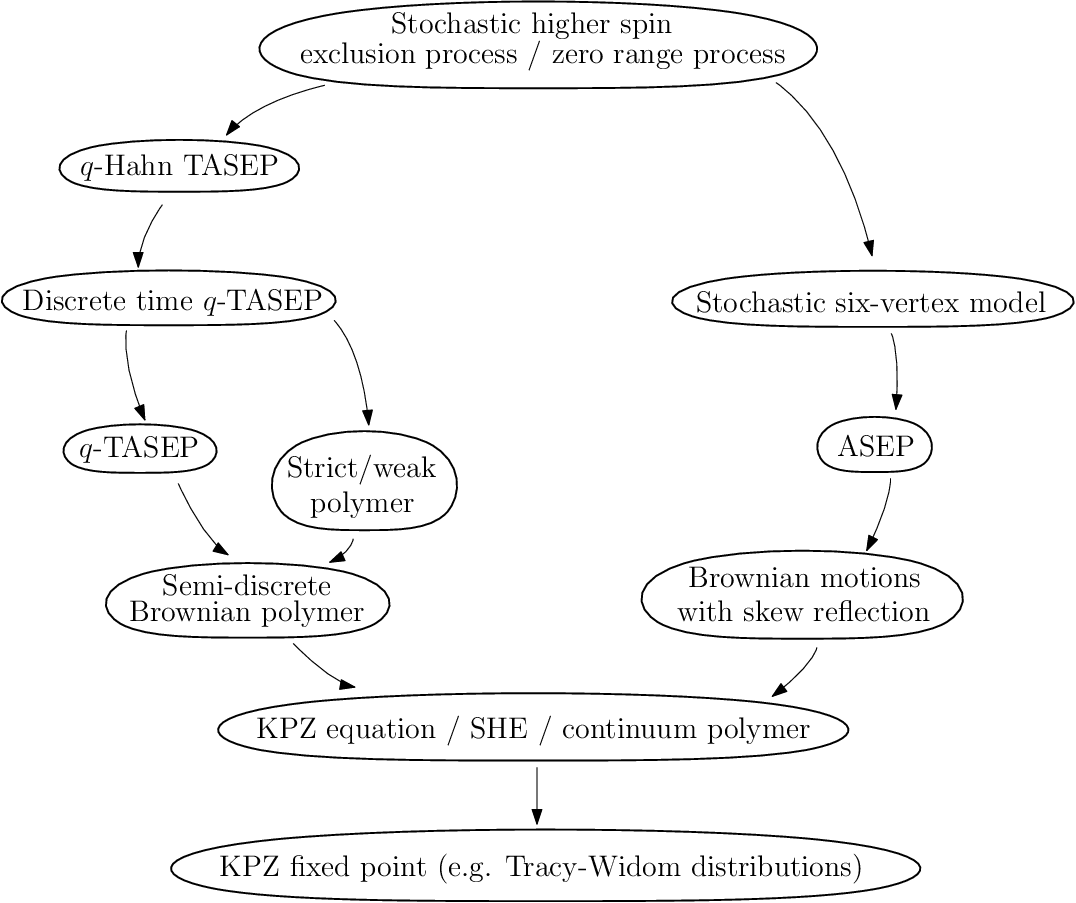}
\end{center}
\caption{Hierarchy of various degenerations of the four-parameter family of higher spin particle systems. Each arrow represents a particular degeneration of models (with the exception of the arrow to the KPZ fixed point which is at the level of one-point limiting distributions). The arrows should be transitive, though not every additional arrow has been proved.}
\label{fig:Diagramofprocesses}
\end{figure}

Besides ASEP and $q$-TASEP, our family of particle systems has a number of other interesting degenerations. Some of this hierarchy of degenerations is illustrated in Figure \ref{fig:Diagramofprocesses} wherein we show the relations to various known systems in the literature. There are, of course, many other degenerations to study. The exact nature of the degenerations are indicated below or in the cited literature. The left-hand side of the figure involves $q$-Hahn TASEP \cite{Povolotsky2013,Corwin2014qmunu} (see also Section \ref{sec:qHahn}), discrete time $q$-TASEP \cite{BorodinCorwin2013discrete}, $q$-TASEP \cite{BorodinCorwin2011Macdonald,BorodinCorwinSasamoto2012}, the strict/weak polymer \cite{CorwinSeppalainenShen2014,OConnellOrthmann2014}, and the semi-discrete Brownian polymer \cite{OConnellYor2001,Oconnell2009_Toda}. The right-hand side of the figure involves the stochastic six-vertex model \cite{BCG6V} (see Section \ref{sec:ssvm}), ASEP \cite{TW_ASEP1,TW_ASEP2,BorodinCorwinSasamoto2012}, and Brownian motions with skew reflection \cite{SasamotoSpohn_2014}.
Both sides have limits to the KPZ equation / stochastic heat equation / continuum polymer \cite{bertini1997,AmirCorwinQuastel2011} and yet further to the KPZ fixed point (e.g. with cube-root scaling and limiting GUE Tracy-Widom one-point fluctuations) \cite{TW_ASEP2,AmirCorwinQuastel2011,SasamotoSpohn2010,BorodinCorwin2011Macdonald,BorodinCorwinFerrari2012,BorodinCorwinRemenik,BorodinCorwinFerrariVeto2014,CorwinSeppalainenShen2014,OConnellOrthmann2014,Barraquand_qTASEP_2014,Veto2014qhahn,BCG6V}.
Besides these examples, there are also `determinantal' particle systems (such as TASEP) which fall into the hierarchy of degenerations and are not depicted.

Let us note that the log-gamma polymer \cite{seppalainen2012,COSZ2011,LEDoussalandStudent}, $q$-pushASEP \cite{BorodinPetrov2013NN,CorwinPetrov2013}, and two-sided $q$-Hahn ASEP \cite{BarrCorwinqHahn} are not included in Figure \ref{fig:Diagramofprocesses}. These systems are all diagonalized in the same basis (or degenerations thereof) as our four-parameter family of systems, and they have explicit and elementary eigenvalues. While we expect that these systems can also be incorporated in some form into our hierarchy, we do not pursue this direction here and leave it for future work. 

\subsection{Outline}
Section \ref{sec:jones} is devoted to the study of the $J=1$ (or horizontal spin $1/2$) vertex models. In particular, Section \ref{sec:def} provides definitions of the $J=1$ $\Lmat$-matrix, describes certain conditions on parameters under which the matrix is stochastic, and constructs three-parameter discrete time zero range and exclusion processes from the $\Lmat$-matrix. Section \ref{sec:diagonal} provides the Bethe ansatz diagonalization of these processes. Section \ref{sec:implicitduality} contains the statements and proofs of dualities enjoyed by these systems.

Section \ref{sec:fus} implements the fusion procedure through which we go from $J=1$ to arbitrary $J\in \Z_{\geq 1}$ -- thus yielding the fourth parameter\footnote{The four parameters we work with are $q,\alpha,I,J$ or in a different parametrization, $q,\alpha,\nu,\beta$.}. In particular, Section \ref{sec:fusionMFT} explains the fusion procedure by which the horizontal spin is taken from $1/2$ to $J/2$ for arbitrary $J\in \Z_{\geq 1}$. The higher spin $\Lmat$-matrix is constructed through use of the theory of Markov functions which draws on certain special properties we check for the $J=1$ case. The diagonalizability and dualities for the higher spin systems follow immediately from the $J=1$ cases. The form of the $\Lmat$-matrix (and hence higher spin zero range and exclusion processes) is initially rather inexplicit. Section \ref{sec:RR} deduces a recursion relation in $J$ satisfied by the $\Lmat$-matrix. Section \ref{sec:qRacah} provides an explicit solution to that recursion relation in terms of $q$-Racah polynomials (or terminating basic hypergeometric functions).

Section \ref{sec:mom} utilizes the duality between the $J$ higher spin zero range and exclusion processes to compute nested contour integral moment formulas for the exclusion process with step initial data. These lead to a Fredholm determinant formula for an $e_{q}$-Laplace transform which characterizes the exclusion process's one-point marginal distribution. We do not pursue asymptotics, though this type of Fredholm determinant has been used before for such purposes (see references earlier in the introduction).

Proposition \ref{rem:stochweights} identifies four different cases of parameters under which the $J=1$ $\Lmat$-matrix is stochastic. Case (1) of that remark is assumed in the earlier sections of this paper. However, in Section~\ref{sec:spec} we explain how to extend our results to the other three cases. Section \ref{sec:case2} addresses case~(2). Section~\ref{sec:reflectioninversion} identifies  reflection and inversion symmetries of the higher spin $\Lmat$-matrix. Section~\ref{sec:case3} addresses case~(3) and Section \ref{sec:case4} addresses case (4). Section \ref{sec:ssvm} demonstrates how the stochastic six-vertex model \cite{BCG6V} arises from our systems. Section \ref{sec:qHahn} describes another degeneration to the $q$-Hahn processes \cite{Povolotsky2013,Corwin2014qmunu}. Section \ref{sec:inhomo} briefly notes which results extend to spatially or temporally inhomogeneous parameters.

Appendix \ref{sec:Bethe} recalls key facts (namely, the Plancherel theory) about the Bethe ansatz eigenfunctions which diagonalize the systems considered herein.
 Appendix \ref{sec:J123} contains explicit formulas for $\Lmat$-matrix elements for $J=1,2,3$. Appendix \ref{sec:YBE} states the Yang-Baxter type equation satisfied by the $\Lmat$-matrix.

\subsection{Acknowledgements}
We thank Alexei Borodin for providing us with an early draft of \cite{Borodin2014vertex}, and for early discussions related to this work as well as comments regarding an early version of the fusion proof described in Section \ref{sec:fusionMFT}. IC also appreciates discussions with Nicolai Reshetikhin regarding the general challenge of finding stochastic quantum integrable systems. IC was partially supported by the NSF through DMS-1208998, the Clay Mathematics Institute through a Clay Research Fellowship, the Institute Henri Poincare through the Poincare Chair, and the Packard Foundation through a Packard Fellowship for Science and Engineering.
LP was partially supported by the University of Virginia
through the EDF Fellowship.

\section{$J=1$ higher spin stochastic six-vertex model}\label{sec:jones}


\subsection{Definitions and construction of processes}\label{sec:def}

We will consider $q,\nu$ as fixed throughout and thus only include other variables explicitly in our notation. 
We will use the notation $\Z_{\geq i}= \{n\in \Z: n\geq i\}$, $\Z_{\leq i}=\{n\in \Z:n\leq i\}$, and $\mathbf{1}_{E}$ is the indicator function of an event $E$. The symbol $\EE$ will denote expectation with respect to the process or random variable that follows.

\begin{figure}
\begin{center}
\includegraphics[scale=.8]{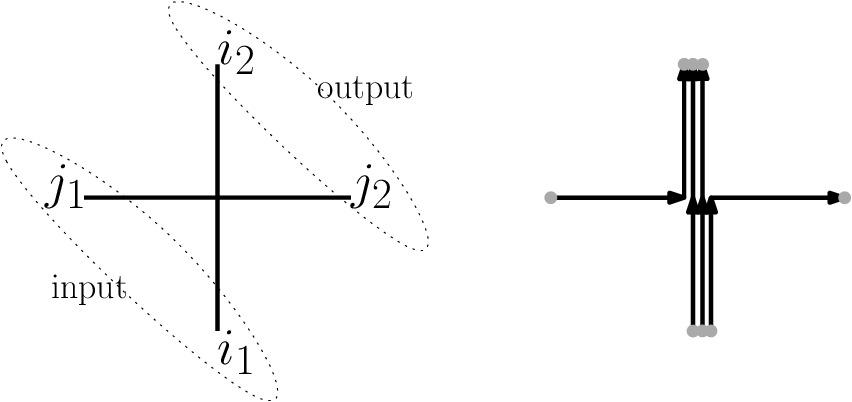}
\end{center}
\caption{Left: The weight associated to a vertex with (clockwise, starting on the bottom) labels $i_1, j_1, i_2, j_2$ in $\Z_{\geq 0}$ is given by $\bernw{1}{q}{\nu}{\alpha}(i_1,j_1;i_2,j_2)$. The $\Lmat$-matrix takes as input $(i_1,j_1)$ and produces output $(i_2,j_2)$. Right: The representation of a vertex associated with $(i_1,j_1,i_2,j_2)=(3,1,3,1)$ in terms of arrows and particles.}
\label{fig:crossing}
\end{figure}

We proceed now to define the $\Lmat$-matrix which will play a central role in all the follows.
\begin{definition}[The $J=1$ $\Lmat$-matrix]\label{defweightsJ1}
For three generic complex parameters $q,\nu,\alpha$ and any four-tuple $(i_1,j_1,i_2,j_2)\in \big(\Z_{\geq 0}\big)^4$ define a corresponding vertex weight as follows: For any $m\geq 0$,
\begin{align}\label{eq:bernweights}
\bernw{1}{q}{\nu}{\alpha}(m,0;m,0) & = \frac{1+\alpha q^m}{1+\alpha}, & \bernw{1}{q}{\nu}{\alpha}(m,0;m-1,1) & = \frac{\alpha(1- q^m)}{1+\alpha},\\
\nonumber \bernw{1}{q}{\nu}{\alpha}(m,1;m+1,0) & = \frac{1-\nu q^m}{1+\alpha}, &\bernw{1}{q}{\nu}{\alpha}(m,1;m,1) & = \frac{\alpha+\nu q^m}{1+\alpha},
\end{align}
and $\bernw{1}{q}{\nu}{\alpha}(i_1,j_1;i_2,j_2)=0$ for all other values of $(i_1,j_1,i_2,j_2)\in \big(\Z_{\geq 0}\big)^4$. Notice that all non-zero weights correspond to four-tuples such that $i_1+j_1=i_2+j_2$, a property we consider as being particle conservation.  Weights are associated graphically with crosses labeled by  $(i_1,j_1,i_2,j_2)$ in the manner of Figure \ref{fig:crossing} (see also Appendix \ref{sec:J123}). It will be convenient, at different points, to think about the $i_1,j_1,i_2,j_2$ as recording the number of arrows (either up or right pointing) along the edges incident to a given vertex, or as recording the number of particles in a given location (the $i$'s) and the number of particles to cross a given edge (the $j$'s).

We will treat these weights as matrix elements. Define a vector space $\VI^{I}$ with basis elements $\{0,1,\ldots,I\}$ if $I\in \Z_{\geq 1}$, and $\{0,1,\ldots\}$ otherwise. Likewise define a vector space $\VJ^{J}$ with basis elements $\{0,1,\ldots,J\}$ if $J\in \Z_{\geq 1}$, and $\{0,1,\ldots\}$ otherwise. As a matter of convention, when describing a linear operator acting between these spaces (or their tensor products) we will only describe matrix elements in the above basis. Then for $I\in \C$ such that $\nu=q^{-I}$ and $J=1$, $\bernw{J}{q}{\nu}{\alpha}:\VI^{I}\otimes \VJ^{J}\to \VI^{I}\otimes \VJ^{J}$ is defined by its matrix elements $\bernw{J}{q}{\nu}{\alpha}(i_1,j_1;i_1,j_2)$. We will associate $\Lmat$-matrices to vertices $(x,y)\in \Z^2$ and denote vector spaces $[\VI^I]_{x}$ and $[\VJ^J]_{y}$ as associated with column $x$ and row $y$. We write $[\bernw{J}{q}{\nu}{\alpha}]_{x,y}$ to mean the matrix which acts as $\bernw{J}{q}{\nu}{\alpha}$ on $[\VI^I]_{x}\otimes [\VJ^J]_{y}$ and the identity on all other $[\VI^I]_{x'}$ and $[\VJ^J]_{y'}$. This convention is illustrated in Figure \ref{fig:Raxes}.
\end{definition}

\begin{figure}
\begin{center}
\includegraphics[scale=.65]{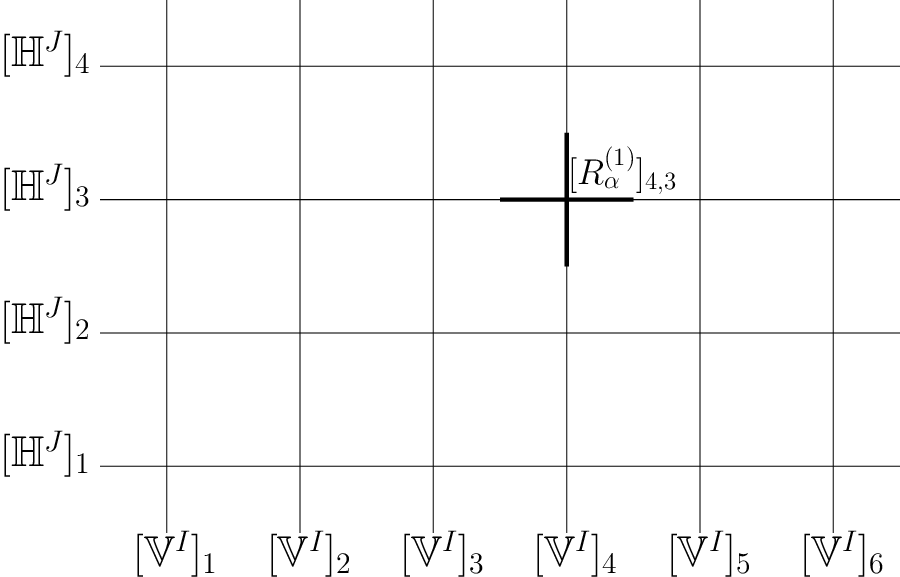}
\end{center}
\caption{The association of $\Lmat$-matrices to each vertex in $\Z^2$ as in Definition \ref{defweightsJ1}.}
\label{fig:Raxes}
\end{figure}

\begin{rem}\label{rem:BorR}
These weights are closely related to the matrix entries of the horizontal spin\footnote{In general, for $I,J\in \Z_{\geq 1}$ one says that our $\Lmat$-matrix is vertical spin $I/2$ and horizontal spin $J/2$.} $1/2$ (i.e. $J=1$) six-vertex model $R$-matrix. For instance, changing variables to $s,u$ via $\alpha = -su$ and $\nu=s^2$, we can match the above defined weights with those of \cite[Definition 2.1]{Borodin2014vertex}: $$\bernw{1}{q}{\nu}{\alpha}(i_1,j_1;i_2,j_2) = w_u(i_2,j_2;i_1,j_1)(-s)^{j_1}(-su)^{j_2-j_1}.$$
The factor $(-su)^{j_2-j_1}$ is a conjugation which does not affect the overall transfer matrix (cf. Remark \ref{rem:star}) whereas the factor $(-s)^{j_1}$ corresponds to conjugation of the transfer matrix (cf. Remark \ref{rem:trans}). \cite[Proposition 2.4]{Borodin2014vertex} further matches $w_u(i_1,j_1;i_2,j_2)$ to the matrix entries of a particular normalization of the $R$-matrix considered in \cite{Mangazeev2014} when $J=1$.
\end{rem}

We now identify various ranges of parameters under which our $\Lmat$-matrix is stochastic.  Notice that our weights have been normalized so that for $i_1,j_1$ fixed,
$$\sum_{i_2,j_2} \bernw{1}{q}{\nu}{\alpha}(i_1,j_1;i_2,j_2)=1.$$
If $q,\nu,\alpha$ are chosen so that all weights are non-negative, then $\bernw{1}{q}{\nu}{\alpha}$ is a stochastic matrix and provides the transition probabilities for going from the pair $(i_1,j_1)$ to $(i_2,j_2)$ (see Figure \ref{fig:crossing} for a graphical representation of this transition from inputs to outputs).

\begin{proposition}\label{rem:stochweights}
The following choices of parameters ensure non-negativity (and hence its stochasticity) of $\bernw{1}{q}{\nu}{\alpha}$:
\begin{enumerate}
\item $q,\nu\in [0,1)$, and $\alpha\geq0$,
\item $q\in (-1,0]$, $\alpha\in (0,1/|q|)$, and $\nu\in \big(-1/|q|, \min(1,\alpha/|q|)\big)$,
\item $q\in [0,1)$, $\nu=q^{-I}$ for $I\in \Z_{\geq 1}$, and $\alpha<-q^{-I}$,
\item $q\in (1,+\infty)$, $\nu=q^{-I}$ for $I\in \Z_{\geq 1}$, and $-q^{-I}<\alpha<0$.
\end{enumerate}
\end{proposition}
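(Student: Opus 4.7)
The plan is a straightforward case-by-case verification: in each of the four regimes, check that each of the four numerators in \eqref{eq:bernweights} has the appropriate sign relative to the common denominator $1+\alpha$. Since the four weights at a fixed input $(i_1,j_1)$ already sum to one by direct inspection, non-negativity of each weight is equivalent to stochasticity.

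First I would record the sign of $1+\alpha$: positive in cases (1), (2), (4) (in case (4) because $\alpha>-q^{-I}>-1$ as $q>1$), and negative in case (3) (because $\alpha<-q^{-I}\leq -1$ as $q\in[0,1)$ and $I\geq 1$). It then remains to check that each of $1+\alpha q^m$, $\alpha(1-q^m)$, $1-\nu q^m$, and $\alpha+\nu q^m$ has the matching sign over the relevant range of $m$---all $m\in\Z_{\geq 0}$ in cases (1), (2), and $m\in\{0,\ldots,I\}$ in cases (3), (4). The extremal weights corresponding to the out-of-range states $(-1,1)$ and $(I+1,0)$ vanish automatically from the formulas (at $m=0$ for the second weight and at $m=I$ for the third, using $\nu q^{I}=1$), so these boundary issues need no separate attention.

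Case (1) is immediate: every factor is a non-negative combination of non-negative reals. For case (2), with $q\in(-1,0]$, I would exploit the fact that the sequence $(q^m)_{m\geq 0}$ attains its maximum $1$ at $m=0$ and its minimum at $m=1$ (equal to $q$), so that each of the four numerators, being affine in $q^m$, is non-negative on the full range $m\geq 0$ as soon as it is non-negative at $m=0$ and $m=1$. Evaluating at these two values shows that the bound $\alpha\in(0,1/|q|)$ controls $1+\alpha q^m$ and that the two-sided bound $\nu\in(-1/|q|,\min(1,\alpha/|q|))$ controls $1-\nu q^m$ and $\alpha+\nu q^m$ simultaneously. For cases (3) and (4), I would use the substitution $\nu q^m=q^{m-I}$: on $\{0,\ldots,I\}$ this is decreasing in $m$ when $q\in[0,1)$ and increasing when $q>1$, so each numerator achieves its extreme signs at $m=0$ or $m=I$, and the hypotheses $\alpha<-q^{-I}$ (respectively $\alpha>-q^{-I}$) force the numerators $1+\alpha q^m$ and $\alpha+\nu q^m$ to be of the correct sign to match the sign of $1+\alpha$.

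No step is genuinely difficult; the only potential obstacle is bookkeeping, and the subtlest case is (2), since both the alternating sign of $q^m$ and the possibly negative value of $\nu$ must be tracked at once when checking that $\alpha+\nu q^m\geq 0$ at both extremes.
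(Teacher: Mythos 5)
Your overall strategy---reduce to checking the sign of each of the four numerators in \eqref{eq:bernweights} against the sign of the common denominator $1+\alpha$, using that the weights at fixed input already sum to one---is exactly what the paper does (its proof reads, in full, ``Each case follows from straightforward inspection''), and your treatment of cases (1), (3) and (4) is correct, including the observation that the out-of-range transitions are killed automatically by the factors $1-q^m$ at $m=0$ and $1-\nu q^m=1-q^{m-I}$ at $m=I$.

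However, the verification of case (2) does not close as written. You assert that evaluating at the extremes $m=0$ and $m=1$ shows the stated bound $\nu\in(-1/|q|,\min(1,\alpha/|q|))$ controls $\alpha+\nu q^m$; but at $m=0$ the fourth weight is $\bernw{1}{q}{\nu}{\alpha}(0,1;0,1)=\tfrac{\alpha+\nu}{1+\alpha}$, and its non-negativity requires $\nu\geq-\alpha$. Since case (2) assumes $\alpha<1/|q|$, one has $-\alpha>-1/|q|$, so $\nu\geq-\alpha$ is strictly stronger than the stated lower bound $\nu>-1/|q|$ and is not implied by it. Concretely, $q=-1/2$, $\alpha=1/10$, $\nu=-1$ satisfies all the hypotheses of case (2) yet gives $\bernw{1}{q}{\nu}{\alpha}(0,1;0,1)=-9/11<0$. (The bound $\nu>-1/|q|$ is the one forced by the third weight at $m=1$, namely $1-\nu q\geq0$; the binding constraint from the fourth weight comes from $m=0$ and is the separate inequality $\nu\geq-\alpha$.) So your write-up must either add the hypothesis $\nu\geq-\alpha$---equivalently, replace the lower endpoint $-1/|q|$ of the $\nu$-interval by $-\alpha$---or flag that case (2) of the proposition is misstated; as it stands, the sentence claiming that the given two-sided bound ``controls $1-\nu q^m$ and $\alpha+\nu q^m$ simultaneously'' is the step that fails.
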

\begin{proof}
Each case follows from straightforward inspection.
\end{proof}
\begin{rem}
In the third and fourth cases, the weights are not always non-negative, however restricted to $i\in \{0,\ldots, I\}$ (i.e. $\VI^{I}$) they are. The only way to transition out of this range for $i$ is to utilize the weight $\bernw{1}{q}{\nu}{\alpha}(I,1;I+1,0)$, but due to our choice of $\nu$, this is zero. The first choice relates to models generalizing the $q$-Boson stochastic particle system \cite{SasamotoWadati1998,BorodinCorwinSasamoto2012} whereas the third and the fourth examples relate to models generalizing ASEP and the stochastic six-vertex model (coming from letting $I=1$) \cite{BCG6V}.
\end{rem}

We will assume that $q, \nu, \alpha$ satisfy case (1) above, namely that
\begin{equation}\label{eq:star7}
q,\nu\in [0,1), \quad \textrm{and}\qquad \alpha\geq0,
\end{equation}
and prove all of our results under those conditions on parameters. In Section \ref{sec:spec} we describe how our various results extend to the other choices of parameters from Proposition \ref{rem:stochweights}.

\begin{figure}
\begin{center}
\includegraphics[scale=.5]{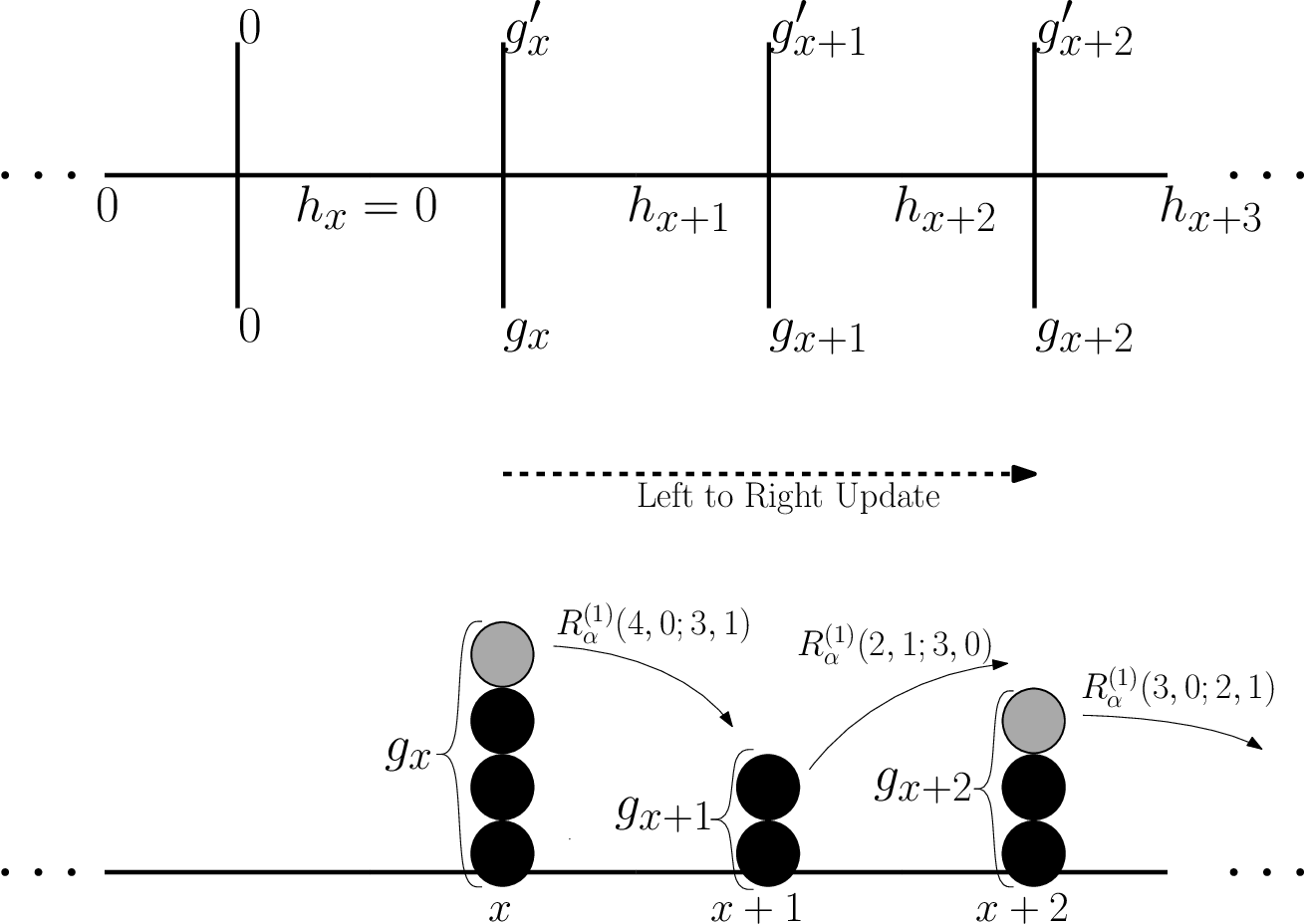}
\end{center}
\caption{A schematic showing the discrete time zero range process sequential update. The grey particles on the bottom of the figure are the ones which will move to the right by one.}
\label{fig:crossesjoined}
\end{figure}

\medskip
We turn now to define certain sequential update, discrete time zero range processes and exclusion processes based off of our stochastic $\Lmat$-matrix\footnote{Strictly speaking, these might not be the most accurate names for these processes, but for lack of a better name they will suffice.}. This is a general construction whose only input is the stochastic $\Lmat$-matrix. We will make use of this construction later when $\bernw{1}{q}{\nu}{\alpha}$ is replaced by $\bernw{J}{q}{\nu}{\alpha}$.

\begin{definition}[State spaces]\label{deftransprobJ1}
Define the space of left-finite particle configurations on the line as
$$
\Ginf= \Big\{\vec{g}= (\cdots, g_{-1},g_0,g_1,\ldots): \textrm{all } g_i\in \Z_{\geq 0}\cup\{+\infty\}, \textrm{ and there exists } x\in \Z \textrm{ such that } g_i=0 \textrm{ for all }i<x\Big\},
$$
and likewise the space of right-finite particle configurations on the line as
$$
\Yinf= \Big\{\vec{y}= (\cdots, y_{-1},y_0,y_1,\ldots): \textrm{all } y_i\in \Z_{\geq 0}\cup\{+\infty\}, \textrm{ and there exists } x\in \Z \textrm{ such that } y_i=0 \textrm{ for all }i>x\Big\}.
$$
For $k\in \Z_{\geq 1}$, define the space of $k$-particle configurations on the line as
$$
\Gspace{k} = \Big\{\vec{g}=(\cdots, g_{-1},g_0,g_1,\ldots):\sum_i g_i = k\Big\}, \qquad \textrm{and}\qquad \Yspace{k} = \Big\{\vec{y}=(\cdots, y_{-1},y_0,y_1,\ldots):\sum_i y_i = k\Big\}.
$$
Though $\Gspace{k}$ and $\Yspace{k}$ are the same, we differentiate to keep track of the separate Markov processes for which they will be the state spaces.
The spaces $\Gspace{k}$ and $\Yspace{k}$ are (respectively) in bijections with ($\mathbb{W}$ for Weyl chamber)
$$
\Gmspace{k} = \Big\{\vec{m} = (m_1\leq \cdots\leq m_k): \textrm{all } m_i\in \Z\Big\},\qquad \textrm{and}\qquad \Ynspace{k} = \Big\{\vec{n} = (n_1\geq \cdots\geq n_k): \textrm{all } n_i\in \Z\Big\}.
$$
The bijection is given by associating to a state $\vec{g}$ or $\vec{y}$, the ordered list of the $k$ particle locations. As a convention, for $\vec{g}$ we associate $\vec{m}$ with weakly increasing particle location order, and for $\vec{y}$ we associated $\vec{n}$ with weakly decreasing particle location order. Given an operator $B$ acting on functions from $\Gspace{k}$ to $\C$, we will overload notation and let $B$ also denote the operator on functions $\tilde{f}$ from $\Gmspace{k}$ to $\C$ defined via $(B\tilde{f})(\vec{m}) = (B f)(\vec{g})$ where $\vec{g}$ and $\vec{m}$ are associated via the bijection, and $f(\vec{g}) = \tilde{f}(\vec{m})$. Finally, define (for later use) $\Ginf_{I}, \Yinf_{I}, \Gspace{k}_I$, and $\Yspace{k}_I$ to be restrictions of the respective spaces so that each $g_i$ or $y_i$ lies in $\{0,\ldots, I\}$. Define $\Gmspace{k}_I$ and $\Ynspace{k}_I$ as the  respective images of $\Gspace{k}$ and $\Yspace{k}$. In other words, having no clusters of equal $m_i$'s or $n_i$'s of length large than $I$.

Define the space of right-finite exclusion particle configurations
$$
\Xinf = \Big\{\vec{x}=(x_1>x_2>\cdots \big): \textrm{all } x_i\in \Z\Big\},
$$
and for $N\geq 1$ define the space of $N$-particle configurations
$$
\Xspace{N} = \Big\{\vec{x}=(x_1>\cdots>x_N\big): \textrm{all } x_i\in \Z\Big\}.
$$
By convention, we define `virtual particles' $x_i\equiv +\infty$ for all $i\in \Z_{\leq 0}$.
\end{definition}

\begin{definition}[$J=1$ high spin zero range process]\label{deftransprobJ1ZRP}

For $k\geq 1$, define the $k$-particle discrete time {\it $J=1$ higher spin zero range process} $\vec{g}(t)$ with state space $\Gspace{k}$ according to the following update rule (see Figure \ref{fig:crossesjoined}). Given state $\vec{g}$ we update to state $\vec{g}'$ sequentially.. Start at the left-most site $x\in \Z$ such that $g_x>0$. Let $h_x=0$ and randomly choose $g'_x$ and $h_{x+1}$ according to the probability distribution $\bernw{1}{q}{\nu}{\alpha}(g_x,h_x;g'_x,h_{x+1})$. Now proceed sequentially so that given $g_{x+1}$ and the choice of $h_{x+1}$ randomly choose $g'_{x+1}$ and $h_{x+2}$ according to the probability distribution $\bernw{1}{q}{\nu}{\alpha}(g_{x+1},h_{x+1};g'_{x+1},h_{x+2})$. Continue in this manner, augmenting $x$. Since there are finitely many particles and since all probabilities of the form $\bernw{1}{q}{\nu}{\alpha}(0,j_1;i_2,j_2)$ are strictly less than 1, it follows that eventually the output of the sequential update step will be all zeros. The update can be stopped and all subsequent $g'$ values are set to zero. We will denote the transition probability (given by the product of $\Lmat$-matrix weights) from state $\vec{g}$ to $\vec{g}'$ as $\highspinBoson{\alpha}{q\alpha}(\vec{g},\vec{g}')$ and the transition operator\footnote{The operator acts on functions $f:\Gspace{k}\to \C$ as $\big(\highspinBoson{\alpha}{q\alpha} f\big)(\vec{g}) = \sum_{\vec{g}'\in \Gspace{k}} \highspinBoson{\alpha}{q\alpha}(\vec{g},\vec{g}') f(\vec{g}')$} with matrix entries as $\highspinBoson{\alpha}{q\alpha}$. This operator has a well-defined action on functions $f$ which are bounded as $\vec{g}$ goes to infinity\footnote{By $\vec{g}$ going to infinity, we mean that the right-most particle in $\vec{g}$ goes to infinity.}. The odd superscript notation $\highspinBoson{\alpha}{q\alpha}$ is due to our eventual extension to $\highspinBoson{\alpha}{\beta}$ for arbitrary $\beta$ (cf. Remark \ref{sec:analcont}). Note that the dynamics of this process preserves the total number of particles due to the particle conservation property of the $\Lmat$-matrix.

We also define the infinite-particle version of the process $\vec{g}(t)$ with state space $\Ginf$. Since the update is from left to right and we are dealing with left-finite initial data, we can show that the update is well-defined. Towards this end, for $M\in \Z$, define the restriction of a state $\vec{g}$ to $\Z_{\leq M}$ as $\vec{g}\big\vert_{M} =  (g_i \cdot \mathbf{1}_{i\leq M})_{i\in \Z}$ and define a sequence of finite-particle processes $\vec{g}\big\vert_M(t)$ such that $\vec{g}\big\vert_M(0) = \vec{g}\big\vert_M$. Each of these initial data has finitely many particles, and hence the evolution according to the above defined finite-particle version of the $J=1$ higher spin zero range process is well-defined. We define $\vec{g}(t)$ as the inverse limits (in law) of the restriction of $\vec{g}\big\vert_M(t)$ to $\Z_{\leq M}$ (Note: even though the initial data was restricted to $\Z_{\leq M}$, the evolution may have left that sector). Because of the left to right update and one-sided nature to particle movement, these restrictions form a consistent family and the desired inverse limit exists. Though we cannot write transition probabilities, we can still define a transition operator acting on a suitable domain of functions. The operator $\highspinBoson{\alpha}{q\alpha}$ will act on functions $f:\Ginf\to \C$ which are stable at infinity. By stable at infinity we mean that for each $\vec{g}\in \Ginf$, over all $\vec{g}' \geq \vec{g}$, $f(\vec{g}')$ is uniformly bounded and $f(\vec{g}'\big\vert_{M})$ converges uniformly to $f(\vec{g}')$. The inequality $\vec{g}'\geq \vec{g}$ means that for all $x\in \Z$, $\sum_{i\leq x} g_i \geq \sum_{i\leq x} g'_i$ (in other words, the state $\vec{g}'$ can be accessed from $\vec{g}$ via moving particles to the right). For such stable functions, $\lim_{M\to \infty}\big(\highspinBoson{\alpha}{q\alpha} f\big)(\vec{g}\big\vert_M)$ exists for all $\vec{g}$, and defines $\big(\highspinBoson{\alpha}{q\alpha} f\big)(\vec{g})$.

In the same manner as above, we define space reversed zero range processes $\vec{y}(t)$ in which particles are updated right to left with state space $\Yspace{k}$ (or $\Yinf$). This reversed process involves the $\Lmat$-matrix which is reflected in the $y$-axis. The space reversed transition operator $\revhighspinBoson{\alpha}{q\alpha} = P \highspinBoson{\alpha}{q\alpha} P^{-1}$ where $\big(P f\big)\big((y_i)_{i\in \Z}\big) = f\big((y_{-i})_{i\in \Z}\big)$ is the space reversal operator (note that $P^{-1}=P$). We may likewise extend from finite to infinite particle configurations.
\end{definition}

\begin{rem}\label{rem:trans}
The $J=1$ higher spin zero range process describe in Definition \ref{deftransprobJ1ZRP} should be thought of as a full line version of the transfer matrix built from the $\Lmat$-matrix with matrix elements given in Definition \ref{defweightsJ1}. The standard construction of a transfer matrix on $\Z/L\Z$ involves taking the product of $\Lmat$-matrices and then tracing out the horizontal space. In other words, one defines the transfer matrix as $\mathrm{tr}_{[\VJ^{J}]_1}\big([\bernw{1}{q}{\nu}{\alpha}]_{1,1}[\bernw{1}{q}{\nu}{\alpha}]_{2,1}\cdots [\bernw{1}{q}{\nu}{\alpha}]_{L,1}\big)$. The resulting matrix maps $[\VI^{I}]_1\otimes [\VI^{I}]_2\otimes \cdots\otimes [\VI^{I}]_L$ to itself. Under standard normalization, it is not clear how to directly construct transfer matrices on $\Z$. However, if we work with stochastic $\Lmat$-matrices, then all weights are strictly less than 1, except for the weight $\bernw{1}{q}{\nu}{\alpha}(0,0;0,0)=1$. This enables us to make sense of the infinite product of these (stochastic) $\Lmat$-matrices, at least when restricted to the sector in which there is a finite total number of particles. As in Definition \ref{deftransprobJ1ZRP}, with a little more work one can likewise construct infinite particle number versions of these resulting stochastic transfer matrices (see also \cite{BCG6V}).
\end{rem}

\begin{rem}
In light of Remark \ref{rem:BorR}, our ZRP transition probability $\revhighspinBoson{\alpha}{q\alpha}(\vec{n};\vec{n}')$ for $\vec{n},\vec{n}'\in \Ynspace{k}$ is related to the weight $G_{\vec{n}/\vec{n}'}$ up-to a simple conjugation by an eigenfunction of $G$. Thus, our stochastic transition operators are Doob $h$-transforms of the transfer matrices considered in \cite{Borodin2014vertex}. It would be interesting to investigate whether conjugation with respect to other eigenfunctions result in stochastic transfer matrices.
\end{rem}

\begin{rem}\label{rem:star}
The construction of the finite particle zero range process given in Definition \ref{deftransprobJ1ZRP} is invariant under conjugation of the $\Lmat$-matrix elements via multiplication by $f(j_1)/f(j_2)$ for any non-zero function $f:\VJ^J\to \C$. Such a conjugation may, however, destroy the stochasticity of the individual $\Lmat$-matrices (despite retaining that of the entire transfer matrix). 
\end{rem}

\begin{figure}
\begin{center}
	\includegraphics[scale=.8]{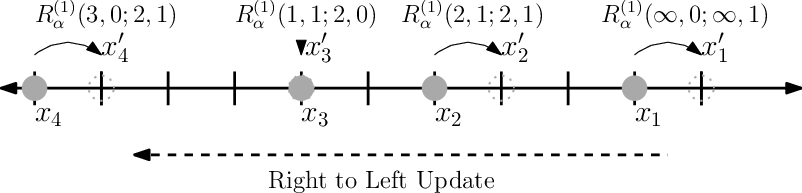}
\end{center}
\caption{The exclusion process constructed from the $\Lmat$-matrix. The first particle $x_1$ is updated and then, based on the length of the previous update, each subsequent particle is updated.}
\label{fig:TASEP}
\end{figure}

\begin{definition}[$J=1$ higher spin exclusion process]\label{deftransprobJ1AEP}
Via a gap/particle transform\footnote{We may consider our zero range process as describing the size of gaps between labeled particles. This equivalence is unique up to an overall shift in the labeled particle positions.}, we define an exclusion process. For $N\geq 1$, define the $N$-particle discrete time {\it $J=1$ higher spin exclusion process} $\vec{x}(t)$ with state space $\Xspace{N}$ according to the following update rule (see Figure \ref{fig:TASEP}) . Given state $\vec{x}$ we update to state $\vec{x}'$ sequentially. Start at $x_1$, randomly choose $x'_1\in \{x_1,x_1+1,\ldots\}$ according to the probability distribution\footnote{This corresponds to taking the well-defined limit $m\to \infty$ in Definition \ref{defweightsJ1}.} $\bernw{1}{q}{\nu}{\alpha}(\infty,0;\infty,x'_1-x_1)$. Proceed sequentially up to $j=N$ so that given $x'_j$ and $x_{j+1}$, randomly choose $x'_{j+1}\in \{x_{j+1},\ldots, x_j -1\}$ according to the probability distribution $\bernw{1}{q}{\nu}{\alpha}(x_j-x_{j+1}-1,x_j'-x_j;x'_j-x'_{j+1}-1,x'_{j+1}-x_{j+1})$. We will denote the transition probability from state $\vec{x}$ to $\vec{x}'$ as $\highspinTASEP{\alpha}{q\alpha}(\vec{x},\vec{x}')$, and the associated transition operator with these matrix entries as $\highspinTASEP{\alpha}{q\alpha}$. This transition operator has a well-defined action on functions which are bounded as $\vec{x}$ goes to infinity. In order to define the infinite version of this process with state space $\Xinf$ it suffices to observe that the first $N$ particles evolve according to the $N$-particle exclusion process (and hence the infinite-particle version can be defined as an inverse limit of consistent laws of these processes). We can also define the transition operator's action on functions $f:\Xinf\to \C$ which are stable at infinity. By stable at infinity we mean that for each $\vec{x}\in \Xinf$, over all $\vec{x}'\geq \vec{x}$, $f(\vec{x}')$ is uniformly bounded and $f(\vec{x}'\big\vert_{N})$ converges uniformly to $f(\vec{x}')$. The inequality $\vec{x}'\geq \vec{x}$ means that for all $i\in \Z_{\geq 1}$, $x_i'\geq x_i$, and $\vec{x}'\big\vert_{N} = (x_1,\ldots,x_N)$. For such stable functions  $\lim_{N\to \infty}\big(\highspinTASEP{\alpha}{q\alpha} f\big)(\vec{x}\big\vert_N)$ exists for all $\vec{x}$, and defines $\big(\highspinTASEP{\alpha}{q\alpha} f\big)(\vec{x})$.
\end{definition}

\begin{definition}[$q$-Hahn distribution and zero range process]\label{def:qhahn}
For generic complex $q,\mu,\nu$, and $y\in \Z_{\geq 0}$, define the (complex) probability distribution\footnote{One condition under which this is a bona-fide positive probability distribution is if $|q|<1$ and $0\leq \nu\leq\mu<1$. For discussion and references regarding this distribution, see \cite{Povolotsky2013,Corwin2014qmunu}.} on $s\in \{0,\ldots, y\}$ as
$$
\varphi_{\mu}(s|y)=\varphi_{q,\mu,\nu}(s|y) = \mu^s\frac{(\nu/\mu;q)_{s}(\mu;q)_{y-s}}{(\nu;q)_{y}} \frac{(q;q)_{y}}{(q;q)_{s}(q;q)_{y-s}}, \qquad\textrm{with}\qquad (a;q)_n :=\prod_{j=1}^{n} (1-aq^{j-1}).
$$
For $k\geq 1$ fixed and $i\in \Z$, define linear operators $\big[\qHahnA{q}{\nu}{\mu}\big]_i$ via their action on functions $f:\Yspace{k}\to \C$
$$
\Big(\big[\qHahnA{q}{\nu}{\alpha}\big]_i f\Big)(\vec{y}) = \sum_{s_i=0}^{y_i} \varphi_{-\alpha}(s_i|y_i) f\big(\vec{y}_{i,i-1}^{s_i}\big).
$$
Note the inclusion of the negative sign in $\varphi_{-\alpha}(s_i|y_i)$. Here, for $\vec{y}\in \Yspace{k}$, we have set $
\vec{y}_{i,i-1}^{s} = (\ldots, y_{i-1}+s,y_i -s,\ldots,)$.
Define the $k$-particle space reversed {\it $q$-Hahn zero range process} transition operator\footnote{If the parameters $q,\mu,\nu$ are such that $\varphi_{\mu}$ is always positive, then $[\qHahnA{q}{\nu}{\alpha}\big]_i$ corresponds with the Markov update by which $s_i$ particles are moved from position $i$ to $i-1$ according to the distribution $\varphi_{-\alpha}(s_i|y_i)$, and $\qHahnH{q}{\nu}{\alpha}$ is the Markov update by which one updates each site in parallel according to the $\qHahnA{q}{\nu}{\alpha}$ single-site update. We do not, however, rely upon such positivity in our use of this distribution and its associated operators.}
$$
\qHahnH{q}{\nu}{\alpha}= \cdots \big[\qHahnA{q}{\nu}{\alpha}\big]_{-1}\big[\qHahnA{q}{\nu}{\alpha}\big]_{0}\big[\qHahnA{q}{\nu}{\alpha}\big]_{1}\cdots.
$$
In the same way as for $\highspinBoson{\alpha}{q\alpha}$ (up to space reversal), we also define the transition operator's action on functions $f:\Yinf\to \C$ which are stable at negative infinity.
Likewise, define the non-space reversed operator $\qHahnHR{q}{\nu}{\alpha} = P\qHahnH{q}{\nu}{\alpha}P^{-1}$
where $P$ is the space reversal operator from Definition \ref{deftransprobJ1ZRP}.
\end{definition}

\begin{rem}\label{rem:followingprop}
	We will use the following properties of  $\varphi_{\mu}(s|y)$ which one can readily check (here $s\in\{1,\ldots,y\}$):
	\begin{align}
		\varphi_{\mu}(s|y)
		&=\varphi_{\mu}(s-1|y)
		\cdot\mu\, \frac{1-q^{y+1-s}}{1-q^{s}}
		\frac{1-\nu q^{s-1}/\mu}{1-\mu q^{y-s}},
		\label{varphi_property_1}
		\\
		\label{varphi_property_2}
		\varphi_{q\mu}(s|y)
		&=
		\frac{1}{1-\mu}
		\Big(
		(1-\mu q^{y-s})
		\varphi_{\mu}(s|y)
		-\mu(1-q^{y+1-s})
		\varphi_{\mu}(s-1|y)
		\Big).
	\end{align}
\end{rem}

\subsection{Bethe ansatz diagonalization}\label{sec:diagonal}

In Appendix \ref{sec:Bethe} we recall the Bethe ansatz eigenfunctions which, according to the result we now prove, diagonalize the higher spin zero range process with transition matrix $\revhighspinBoson{\alpha}{q\alpha}$. We will rely upon the Plancherel theory developed in \cite{BCPS2014} for these eigenfunctions (also reviewed in the appendix). In what follows we will, as described in Definition \ref{deftransprobJ1}, overload notation and let operators acting on functions of the $\vec{y}$ variables also act on functions of the $\vec{n}$ variables via their bijective association.

\begin{proposition}\label{prophighspindiagJ1}
Assuming $\big\vert \tfrac{1-z_i}{1-\nu z_i}\, \tfrac{\alpha+\nu}{1+\alpha}\big\vert<1$
for $1\leq i\leq k$, then\footnote{By applying the space reversal operator $P$ we can also produce eigenrelations for $\highspinBoson{\alpha}{q\alpha}$.}
$$
\big(\revhighspinBoson{\alpha}{q\alpha} \Psil_{\vec{z}}\big)(\vec{n}) = \prod_{i=1}^{k} \frac{1+q\alpha z_i}{1+ \alpha z_i} \, \Psil_{\vec{z}}(\vec{n})
$$
\end{proposition}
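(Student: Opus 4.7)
The plan is to establish the eigenrelation by the coordinate Bethe ansatz, reducing the problem to a single-particle computation together with a two-body consistency check. First I would verify the $k=1$ case directly. A lone particle at site $n$, swept by the left-to-right sequential update (here in space-reversed form, so effective motion is leftward), produces a jump of size $j\geq 0$ whose probability factors into a starting weight $\bernw{1}{q}{\nu}{\alpha}(1,0;0,1)=\alpha(1-q)/(1+\alpha)$, a string of $j-1$ empty-site pass-throughs $\bernw{1}{q}{\nu}{\alpha}(0,1;0,1)=(\alpha+\nu)/(1+\alpha)$, and a termination $\bernw{1}{q}{\nu}{\alpha}(0,1;1,0)=(1-\nu)/(1+\alpha)$, together with the stay-in-place probability $\bernw{1}{q}{\nu}{\alpha}(1,0;1,0)=(1+\alpha q)/(1+\alpha)$. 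Paired against the single-particle plane wave from Appendix~\ref{sec:Bethe} (whose per-step factor is $\tfrac{1-z}{1-\nu z}$), each pass-through contributes a factor $\tfrac{1-z}{1-\nu z}\cdot\tfrac{\alpha+\nu}{1+\alpha}$, and the convergence hypothesis is exactly what is needed to sum the resulting geometric series; a short algebraic simplification then collapses the sum to $\tfrac{1+q\alpha z}{1+\alpha z}$.

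For the $k$-particle case, I would write $\Psil_{\vec{z}}(\vec{n})=\sum_{\sigma\in S_k}A_\sigma(\vec{z})\prod_i\psi_{z_{\sigma(i)}}(n_i)$ as in the appendix, with $A_\sigma$ built from the explicit two-body $S$-matrix, and apply $\revhighspinBoson{\alpha}{q\alpha}$ term by term. In the Weyl chamber where all $n_i$ are strictly separated, the sequential update factorizes across particles (long strings of empty vertices propagate the horizontal arrow between widely separated clusters independently), so the free equations give the desired eigenvalue $\prod_i\tfrac{1+q\alpha z_i}{1+\alpha z_i}$ automatically. The remaining work is the boundary check when two or more $n_i$ coincide: at such sites the $R$-matrix weights with $m\geq 2$ contribute, and the resulting exchange relations between the $\psi_{z_{\sigma(i)}}$ and $\psi_{z_{\sigma(i+1)}}$ factors must be absorbed by the antisymmetrizing coefficients $A_\sigma$. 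This is a two-body problem by locality of the update, so it suffices to check the relation for $k=2$ with two particles at a common site, which by the Yang-Baxter property of the underlying six-vertex $R$-matrix (Appendix~\ref{sec:YBE}) reduces to the same scalar identity used in \cite{BCPS2014} to define the Bethe $S$-matrix.

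Finally, convergence of the various series arising in $\bigl(\revhighspinBoson{\alpha}{q\alpha}\Psil_{\vec{z}}\bigr)(\vec{n})$ must be justified. Each time a horizontal arrow traverses an empty site in the sequential update, the corresponding term in the expansion picks up the ratio $\tfrac{1-z_i}{1-\nu z_i}\cdot\tfrac{\alpha+\nu}{1+\alpha}$ relative to the base eigenfunction value, so absolute convergence of all such cascades is precisely the hypothesis of the proposition; this legitimizes the formal manipulations above and also legitimizes the action of $\revhighspinBoson{\alpha}{q\alpha}$ on $\Psil_{\vec{z}}$ as a stable-at-infinity function in the sense of Definition~\ref{deftransprobJ1ZRP}.

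The step I expect to be the main obstacle is the boundary identity for clustered particles. Because the update is sequential, an interaction at a cluster is not cleanly a nearest-neighbor exchange but rather a telescoping cascade of $R$-matrix applications, so the "two-body" reduction requires a careful rearrangement of multi-vertex sums using Yang-Baxter before the cancellation against the explicit $S$-matrix factors in $A_\sigma$ becomes visible.
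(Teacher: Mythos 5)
Your single-particle computation is correct: the geometric series over pass-through vertices with weight $\tfrac{\alpha+\nu}{1+\alpha}$ converges precisely under the stated hypothesis, and summing it does yield $\tfrac{1+q\alpha z}{1+\alpha z}$. But the multi-particle step contains a genuine gap, and it is not merely the technical rearrangement you flag at the end. Your reduction rests on the claim that for strictly separated $n_i$ the action of $\revhighspinBoson{\alpha}{q\alpha}$ factorizes over particles, so that only a boundary identity at coinciding coordinates remains to be checked. That claim is false for this operator: a single application of the sequential (transfer-matrix) update moves a particle an unbounded distance, and the horizontal arrow it emits propagates until it is absorbed by the \emph{next} particle, however far away. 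So even deep in the Weyl chamber the two-particle action is an infinite sum over meeting events coupling $n_1$ and $n_2$, not a product of one-particle operators, and the ``interaction'' is not localized at coinciding coordinates. The paper itself warns of exactly this (see the remark following Corollary \ref{cor:secrat}): these transfer-matrix operators are not amenable to the decomposition into free one-dimensional operators subject to two-body boundary conditions. Relatedly, the Yang--Baxter equation of Appendix \ref{sec:YBE} intertwines two horizontal lines with distinct spectral parameters $\alpha_1,\alpha_2$; here there is only one horizontal line per time step, so it is not the identity that governs the scattering of two Bethe momenta off a multiply occupied site. What a direct proof actually requires is a resummation of the infinite cascade of $(0,1;0,1)$-vertices between clusters together with a verification that the resulting two-particle ``reflection'' is compensated by the $S$-matrix factors in $\Psil_{\vec{z}}$ --- this is the entire content of the proof, and your proposal asserts it rather than carries it out.

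For comparison, the paper avoids this computation entirely: it quotes \cite[Corollary 4.5 (i)]{Borodin2014vertex}, a Cauchy-type identity for the symmetric rational functions $F_{\vec{n}}$, $G_{\vec{n}'/\vec{n}}$, and then matches weights and eigenfunctions via the change of variables of Remark \ref{rem:BorR}. A direct coordinate-Bethe-ansatz verification along your lines is feasible (it is how \cite{BCG6V} treats the $I=J=1$ case), but it would have to confront the long-range, non-factorizing structure of the update head-on rather than reduce it to a local two-body boundary condition.
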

\begin{proof}
We appeal to the known eigenfunction relations for the $J=1$ higher spin six-vertex model. This can be derived on the periodic lattice via algebraic Bethe ansatz (cf. \cite{Reshetikhin2008}). We use \cite[Corollary 4.5 (i)]{Borodin2014vertex} wherein it is shown (via a symmetric function theory approach) that if
$$
\Big|\frac{z_i-s}{1-s z_i} \, \frac{u-s}{1-su}\Big|<1
$$
for all $i\in \{1,\ldots, k\}$,
then
$$
\prod_{i=1}^{k}\frac{1-qu z_i}{1-u z_i} c(\vec{n}) F_{\vec{n}}(z_1,\ldots,z_k) = \sum_{\vec{n}'} G_{\vec{n}'/\vec{n}}(v) c(\vec{n}') F_{\vec{n}'}(z_1,\ldots, z_k).
$$
Here $\vec{n}',\vec{n}\in \Ynspace{k}$ and, after changing $z\mapsto sz$,
$$
c(\vec{n}) F_{\vec{n}}(sz_1,\ldots,sz_k)= \prod_{i=1}^{k} \frac{(-s)^{n_i}}{1-s^2 z_i}  \Psir_{\vec{z}}(\vec{n})
$$
where $\nu=s^2$ in $\Psir_{\vec{z}}(\vec{n})$. According to \cite[Definition 3.2]{Borodin2014vertex} the term $G_{\vec{n}'/\vec{n}}(v)$ is equal to the product of the weights from $\vec{n}'$ to $\vec{n}$ where the weights are given as in \cite[Definition 2.1]{Borodin2014vertex}. Remark \ref{rem:BorR} explains the relation of the weights in \cite{Borodin2014vertex} to those considered herein and setting $\alpha=-s v$ and $\nu=s^2$  yields
$$
\prod_{i=1}^{k}\frac{1+q\alpha z_i}{1+\alpha z_i} \, \Psir_{\vec{z}}(\vec{n})  =\Big(\big(\revhighspinBoson{\alpha}{q\alpha}\big)^{T} \Psir_{\vec{z}}\Big)(\vec{n}).
$$
%
%
This, likewise, implies the desired relationship for left eigenfunctions as well.
\end{proof}

\begin{proposition}\label{propqHahndiag}
For $\alpha\in \C$ and $1-\nu z_j\neq 0$ for $1\leq j\leq k$,
$$
\big(\qHahnH{q}{\nu}{\alpha} \Psil_{\vec{z}}\big)(\vec{n}) = \prod_{i=1}^{k} \frac{1+\alpha z_i}{1-\nu z_i}\, \Psil_{\vec{z}}(\vec{n})
$$
\end{proposition}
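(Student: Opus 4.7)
The plan is to exploit two structural features simultaneously: (a) the claimed eigenvalue $\prod_{i=1}^k (1+\alpha z_i)/(1-\nu z_i)$ factors across the Bethe variables, and (b) the operator $\qHahnH{q}{\nu}{\alpha}$ is itself a parallel product of site-local updates $[\qHahnA{q}{\nu}{\alpha}]_i$ acting on disjoint sites. This matches perfectly with the plane-wave-plus-scattering structure of $\Psil_{\vec{z}}$, so a coordinate Bethe ansatz verification is natural.

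First I would do the single-site computation. Writing each plane-wave summand of $\Psil_{\vec{z}}(\vec{n})$ as a product of powers $\xi(z_{\sigma(i)})^{n_i}$ in the appropriate base $\xi(z)$ (recalled from Appendix \ref{sec:Bethe}; for the left eigenfunctions in question the relevant base is $\xi(z)=(1-\nu z)/(1-z)$), the action of $[\qHahnA{q}{\nu}{\alpha}]_i$ at a site carrying $y$ particles produces the generating sum $\sum_{s=0}^{y}\varphi_{-\alpha}(s|y)\,\xi(z)^{-s}$. I would evaluate this in closed form via the $q$-Chu-Vandermonde summation (equivalently a terminating ${}_2\phi_1$), obtaining exactly the product $\bigl((1+\alpha z)/(1-\nu z)\bigr)^{y}$ up to a telescoping factor that cancels against the shift in the $\xi(z)^{n_i}$ factors. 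This is the key single-site identity.

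Next I would assemble the local identities into a global eigenrelation on the open Weyl chamber (where no two particles coincide). Here sites act independently, so each permutation summand $A_\sigma(\vec{z})\prod_i\xi(z_{\sigma(i)})^{n_i}$ transforms into $\prod_i (1+\alpha z_{\sigma(i)})/(1-\nu z_{\sigma(i)})$ times itself. Because the eigenvalue is symmetric in $\vec{z}$, summing over $\sigma$ yields the desired relation on the free region. The analogue of Proposition \ref{prophighspindiagJ1} for $\revhighspinBoson{\alpha}{q\alpha}$ is a useful template here, since the global-to-local reduction is the same.

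The main obstacle will be the coalescence analysis: at configurations with clusters $y_i>1$ the free factorization breaks, and one must check that the boundary/scattering corrections from the single-site $q$-Hahn kernel exactly reproduce the two-body scattering built into the coefficients $A_\sigma(\vec{z})$. This is precisely the role of the two identities \eqref{varphi_property_1}--\eqref{varphi_property_2} highlighted in Remark \ref{rem:followingprop}: they reduce a size-$y$ cluster update to a combination of size-$(y{-}1)$ updates with coefficients that match the rational $S$-matrix factors $(z_i-qz_j)/(z_i-z_j)$ appearing in $\Psil_{\vec{z}}$. Rather than carrying out this combinatorial verification from scratch, the cleanest route (and what I would do first) is to invoke the $q$-Hahn zero range eigenfunction diagonalization already established in \cite{Povolotsky2013, Corwin2014qmunu}, which establishes exactly this identity for $\qHahnHR{q}{\nu}{\alpha}$, and then transfer it to $\qHahnH{q}{\nu}{\alpha}$ using the space-reversal conjugation $\qHahnH{q}{\nu}{\alpha}=P\,\qHahnHR{q}{\nu}{\alpha}\,P^{-1}$ recorded in Definition \ref{def:qhahn}, combined with the symmetry of left and right Bethe ansatz eigenfunctions under $P$ documented in Appendix \ref{sec:Bethe}.
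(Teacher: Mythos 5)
Your preferred route --- invoking the known $q$-Hahn zero range diagonalization rather than redoing the coordinate Bethe ansatz --- is exactly the paper's proof, which cites \cite[Proposition 5.13]{BCPS2014} (recording the result of \cite{Povolotsky2013,Corwin2014qmunu} directly in the present notation, for $\qHahnH{q}{\nu}{\alpha}$ acting on $\Psil_{\vec{z}}$, so the space-reversal transfer you describe is not actually needed; note also that Appendix \ref{sec:Bethe} does not spell out a $P$-symmetry between $\Psil$ and $\Psir$, so that step would require its own justification if you went that way). Your fallback sketch of a direct single-site computation via $q$-Chu--Vandermonde plus the cluster reductions of Remark \ref{rem:followingprop} is also a reasonable, more self-contained alternative.

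There is, however, one substantive step missing, and it is the only nontrivial content of the paper's argument. The cited diagonalization is established only on a restricted real parameter range (in the paper's statement, $0\leq \nu\leq -\alpha<1$), whereas the proposition asserts the identity for \emph{arbitrary} $\alpha\in\C$ subject only to $1-\nu z_j\neq 0$. The paper bridges this by observing that, for fixed $\vec{n}$, $\vec{z}$, and $\vec{y}\in\Yspace{k}$, both sides are polynomials in $\alpha$: only finitely many of the site operators $\big[\qHahnA{q}{\nu}{\alpha}\big]_i$ act nontrivially, each weight $\varphi_{-\alpha}(s|y)$ is a polynomial in $\alpha$, and so is the eigenvalue $\prod_{i=1}^{k}(1+\alpha z_i)/(1-\nu z_i)$. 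Two polynomials that agree on an interval agree identically, which yields the claim for all $\alpha\in\C$. As written, your citation route proves only the restricted-range statement; you must either add this polynomiality/continuation observation or actually carry out your direct algebraic verification (which, being an identity of rational functions, would hold for generic parameters without further argument).
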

\begin{proof}
This follows from \cite{Povolotsky2013,Corwin2014qmunu,BCPS2014}. In particular, \cite[Proposition 5.13]{BCPS2014} records the desired result for $0\leq \nu\leq -\alpha <1$. The operator $\qHahnH{q}{\nu}{\alpha}$ depends polynomially on $\alpha$, as does the eigenvalue. Thus, since both sides above are polynomial in $\alpha$ and equal for an interval of values, they must match for all $\alpha\in \C$.
\end{proof}

In Section \ref{sec:qHahn} we develop the relationship between the operator $\qHahnH{q}{\nu}{\alpha}$ and higher spin versions of the zero range process transition operator.

In order to understand the following corollary, the reader is encouraged to recall from Appendix \ref{sec:Bethe} the direct and inverse transforms $\Pld, \Pli$ and the space $\Wc^{k}_{\max}$ on which $\Pli\Pld$ acts as the identity.

\begin{corollary}\label{cor:secrat}
On the space $\Wc^{k}_{\max}$, $\highspinBoson{\alpha}{q\alpha} = \big(\qHahnHR{q}{\nu}{\alpha}\big)^{-1}\qHahnHR{q}{\nu}{q\alpha}$, $\revhighspinBoson{\alpha}{q\alpha} = \big(\qHahnH{q}{\nu}{\alpha}\big)^{-1}\qHahnH{q}{\nu}{q\alpha}$, and $\qHahnH{q}{\nu}{\alpha}$ commutes with itself for different values of $\alpha$.
\end{corollary}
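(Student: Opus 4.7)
The plan is to reduce everything to spectral calculations using the Bethe ansatz eigenfunctions $\Psil_{\vec{z}}$ and the Plancherel isomorphism $\Pli\Pld = \mathrm{id}$ on $\Wc^k_{\max}$ recalled in Appendix \ref{sec:Bethe}. By Proposition \ref{prophighspindiagJ1}, $\revhighspinBoson{\alpha}{q\alpha}$ acts on $\Psil_{\vec{z}}$ by the scalar $\prod_{i=1}^k (1+q\alpha z_i)/(1+\alpha z_i)$, and by Proposition \ref{propqHahndiag}, $\qHahnH{q}{\nu}{\alpha}$ acts on $\Psil_{\vec{z}}$ by $\prod_{i=1}^k (1+\alpha z_i)/(1-\nu z_i)$. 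The key algebraic observation is the telescoping identity
$$
\prod_{i=1}^k\frac{1+q\alpha z_i}{1+\alpha z_i}
=
\prod_{i=1}^k\frac{1-\nu z_i}{1+\alpha z_i}\cdot
\prod_{i=1}^k\frac{1+q\alpha z_i}{1-\nu z_i},
$$
which says that, as spectral multipliers, the eigenvalue of $\revhighspinBoson{\alpha}{q\alpha}$ equals the product of the eigenvalue of $\qHahnH{q}{\nu}{q\alpha}$ with the reciprocal of the eigenvalue of $\qHahnH{q}{\nu}{\alpha}$.

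To convert this into an operator identity I first define $\big(\qHahnH{q}{\nu}{\alpha}\big)^{-1}$ on $\Wc^k_{\max}$ through the Plancherel pair: for $f = \Pli\Pld f$, set
$$
\big(\qHahnH{q}{\nu}{\alpha}\big)^{-1} f \;:=\; \Pli\!\left(\vec{z}\mapsto \prod_{i=1}^k\frac{1-\nu z_i}{1+\alpha z_i}\,(\Pld f)(\vec{z})\right),
$$
which makes sense on the appropriate subspace of $\Wc^k_{\max}$ where $1+\alpha z_i$ stays away from zero along the contours used in $\Pli$. Then both $\revhighspinBoson{\alpha}{q\alpha}$ and $\big(\qHahnH{q}{\nu}{\alpha}\big)^{-1}\qHahnH{q}{\nu}{q\alpha}$ act on $\Psil_{\vec{z}}$ by the same scalar, so by linearity of $\Pli$ they coincide on $\Wc^k_{\max}$. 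The parameter restriction in Proposition \ref{prophighspindiagJ1} must be verified on the support of $\Pld f$, but this is a standard contour-deformation check using the contours specified in the Plancherel theorem.

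For the first identity involving $\highspinBoson{\alpha}{q\alpha}$ and $\qHahnHR{q}{\nu}{\alpha}$, I apply the space reversal intertwiner $P$ from Definition \ref{deftransprobJ1ZRP}. Since $\revhighspinBoson{\alpha}{q\alpha} = P\, \highspinBoson{\alpha}{q\alpha}\,P^{-1}$ and $\qHahnHR{q}{\nu}{\alpha} = P\,\qHahnH{q}{\nu}{\alpha}\,P^{-1}$ by definition, conjugating the already-proved $\revhighspinBoson$ identity by $P^{-1}$ yields the $\highspinBoson$ identity (after noting $P^{-1}=P$). Finally, the self-commutation of $\qHahnH{q}{\nu}{\alpha}$ in the parameter $\alpha$ is immediate from the fact that both $\qHahnH{q}{\nu}{\alpha_1}$ and $\qHahnH{q}{\nu}{\alpha_2}$ are simultaneously diagonalized by $\{\Psil_{\vec{z}}\}$, so on $\Wc^k_{\max}$ they act as composition of scalar multipliers in $\vec{z}$, which commute.

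The main obstacle is not the algebra but the analytic bookkeeping: one must confirm that $\Pli\Pld = \mathrm{id}$ is applicable to the vectors produced after multiplying the spectral variables by $\prod_i(1-\nu z_i)/(1+\alpha z_i)$ or $\prod_i (1+q\alpha z_i)/(1-\nu z_i)$, i.e., that these multipliers preserve the class $\Wc^k_{\max}$ and that no poles are crossed by the contours in $\Pli$. Once the correct contour description from Appendix \ref{sec:Bethe} is invoked, the argument is a purely spectral manipulation, and there are no combinatorial computations to perform on the level of transition kernels.
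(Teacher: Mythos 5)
Your proposal is correct and follows essentially the same route as the paper: both arguments write each operator as $\Pli$ composed with multiplication by its eigenvalue composed with $\Pld$, observe that the multiplier of $\big(\qHahnH{q}{\nu}{\alpha}\big)^{-1}\qHahnH{q}{\nu}{q\alpha}$ telescopes to $\prod_i(1+q\alpha z_i)/(1+\alpha z_i)$, match it with the eigenvalue of $\revhighspinBoson{\alpha}{q\alpha}$ from Proposition \ref{prophighspindiagJ1}, and then conjugate by the space reversal $P$ and invoke Proposition \ref{propqHahndiag} for the remaining claims. The only difference is cosmetic: you phrase the identification pointwise on the eigenfunctions $\Psil_{\vec{z}}$ and flag the contour/pole bookkeeping explicitly, whereas the paper states the same spectral identity directly at the level of the transforms.
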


\begin{proof}
This follows by spectral considerations. Let's prove the first claim. The formula for $\Pli$ implies that\footnote{In the below formulas, the dot represents the variable integrated in the application of the inverse transform.}
\begin{align*}
\Big(\big(\qHahnH{q}{\nu}{\alpha}\big)^{-1} f\Big)(\vec{n}) &= \Pli\Big(\big(ev_{\alpha}(\cdot)\big)^{-1} \big(\Pld f\big)(\cdot)\Big)(\vec{n})\\
\Big(\qHahnH{q}{\nu}{q\alpha} f\Big)(\vec{n}) &= \Pli\Big(ev_{q\alpha}(\cdot)\big(\Pld f\big)(\cdot)\Big)(\vec{n}),
\end{align*}
where $ev_{\alpha}(\vec{z}) = \prod_{i=1}^{k} \frac{1+\alpha z_i}{1-\nu z_i}$.
Combining these yields
$$
\Big(\big(\qHahnH{q}{\nu}{\alpha}\big)^{-1} \qHahnH{q}{\nu}{q\alpha} f\Big)(\vec{n}) = \Pli\Big(ev_{\alpha,q\alpha}(\cdot) \big(\Pld f\big)(\cdot)\Big)(\vec{n})
$$
where $ev_{a,b}(\vec{z}) = \prod_{i=1}^{k} \frac{1+ b z_i}{1+ a z_i}$. On the other hand, it follows from Proposition \ref{prophighspindiagJ1}
that
$$
\big(\revhighspinBoson{\alpha}{q\alpha} f\big)(\vec{n}) = \Pli\Big(ev_{\alpha,q\alpha}(\cdot) \big(\Pld f\big)(\cdot)\Big)(\vec{n})
$$
as well. Conjugating everything by the space reversal operator $P$ produces the second claimed result. Similar considerations and Proposition \ref{propqHahndiag} imply the last commutation relation.
\end{proof}

\begin{rem}
One of the methods used in Bethe ansatz is to rewrite an operator as the direct sum of one-dimensional operators subject to two-body boundary conditions. Not every higher spin transition operator is amenable to this method -- for instance, the stochastic six-vertex model \cite{BCG6V}. This proposition shows that $\highspinBoson{\alpha}{q\alpha}$ (and eventually via fusion, the general $\highspinBoson{\alpha}{\beta}$ operator) can be written as the ratio of $q$-Hahn transition operators. It would be interesting to see if this fact goes through to the case of transfer matrices on the finite lattice $\Z/L\Z$.
\end{rem}

\subsection{Self duality}\label{sec:implicitduality}

%

\begin{definition}\label{def:dualityfunctionals}
We define a number of duality functionals. For $\vec{x}\in \Xinf$ and $\vec{y}\in \Yspace{k}$ for some $k\in \Z_{\geq 1}$, define\footnote{We employ the convention that the product is zero if $y_i>0$ for any $i\leq 0$. This is in accordance with the convention that $x_i(\cdot)\equiv +\infty$ for $i\leq 0$.}
$$
\HH(\vec{x},\vec{y}) = \prod_{i\in \Z} q^{(x_i+i)y_i}.
$$
For $\vec{g}\in \Ginf$ and $\vec{y}\in \Yinf$ define
$$
\GG(\vec{g},\vec{y}) = q^{\sum_{i>j} g_i y_j},\qquad \textrm{and}\qquad \GGhat(\vec{g},\vec{y}) = q^{-\sum_{i\leq j} g_i y_j}.
$$
Notice that unless $\vec{g}\in \Gspace{k}$ and $\vec{y}\in \Yspace{k'}$ for some $k,k'\in \Z_{\geq 1}$, $\GG(\vec{g},\vec{y})$ will equal zero. In the case that $\vec{g}\in \Gspace{k}$ and $\vec{y}\in \Yspace{k'}$ notice that
\begin{align}\label{eq:kkprime}
\GG(\vec{g},\vec{y}) = \GGhat(\vec{g},\vec{y}) \, \cdot \, q^{kk'}.
\end{align}
For $\vec{g}\in \Gspace{k}$ and $\vec{y}\in \Yspace{k'}$ we may overload these functionals as described in Definition \ref{deftransprobJ1} by replacing $\vec{g}$ by $\vec{m}\in \Gmspace{k}$ and $\vec{y}$ by $\vec{n}\in \Ynspace{k'}$.
For $m\in \Z$ (or $n\in \Z$) and $\vec{g}\in \Ginf$ (or $\vec{y}\in \Yinf$)
$$
\Ndown_m(\vec{g}) = \sum_{\ell\leq m} g_{\ell},\qquad \textrm{and} \qquad \Nup_m(\vec{g}) = \sum_{\ell\geq m}g_{\ell}.
$$
For $\vec{g}\in \Gspace{k}$ bijectively equivalent to $\vec{m}\in \Gmspace{k}$ and $\vec{y}\in \Yinf$ we have
$$
\GG(\vec{g},\vec{y}) = \GG(\vec{m},\vec{y}) = \prod_{i=1}^{k} q^{\Ndown_{m_i -1}(\vec{y})},\qquad \textrm{and}\qquad \GGhat(\vec{g},\vec{y}) = \GGhat(\vec{m},\vec{y}) = \prod_{i=1}^{k} q^{-\Nup_{m_i}(\vec{y})}.
$$
For $\vec{g}\in \Ginf$ and $\vec{y}\in \Yspace{k'}$ bijectively equivalent to $\vec{n}\in \Ynspace{k'}$ we have
$$
\GG(\vec{g},\vec{y}) = \GG(\vec{g},\vec{n}) = \prod_{i=1}^{k'} q^{\Nup_{n_i +1}(\vec{g})},\qquad \textrm{and}\qquad \GGhat(\vec{g},\vec{y}) = \GGhat(\vec{g},\vec{n}) = \prod_{i=1}^{k'} q^{-\Ndown_{n_i}(\vec{g})}.
$$
\end{definition}

\begin{definition}\label{def:welladapt}
Call $\vec{x}\in \Xinf$ {\it well-adapted} to $\HH$ if for $\vec{y}\in \Yspace{k}$ the function $\vec{y}\mapsto \HH(\vec{x},\vec{y})$ lies in the space $\Wc^{k}_{\max}$ (see Definition \ref{def:Wmax}).
Call $\vec{g}\in \Ginf$ {\it well-adapted} to $\GG$ if for $\vec{y}\in \Yspace{k}$ the function $\vec{y}\mapsto \GG(\vec{g},\vec{y})$ lies in the space $\Wc^{k}_{\max}$.
Call $\vec{y}\in \Yinf$ {\it well-adapted} to $\GG$ if for $\vec{g}\in \Gspace{k}$ the space reversal of the function $\vec{g}\mapsto \GG(\vec{g},\vec{y})$ lies in the space $\Wc^{k}_{\max}$.
\end{definition}

\begin{rem}\label{rem:welladapt}
There are a number of readily accessible examples of well-adapted $\vec{x},\vec{g}$, and $\vec{y}$.
Step initial data $x_i=-i$, $i\in \Z_{\geq 1}$ is well-adapted to $\HH$. To show this, note that for $c>0$ small enough and $C>0$ large enough the function $\vec{y}\mapsto \HH(\vec{x},\vec{y})$ lies in $\Wc^{k}_{\exp(c,C)}$ which, by Corollary \ref{cor:Wmax} is a subset of $\Wc^{k}_{\max}$. Similarly, one shows that spiked initial data $g_i = +\infty \mathbf{1}_{i=0}$ and likewise $y_i = +\infty \mathbf{1}_{i=0}$ are well-adapted to $\GG$.
\end{rem}

We now state and prove our first duality result, one between the exclusion process $\vec{x}(t)$ and zero range process $\vec{y}(t)$. The approach of the proof follows, for the most part, that of the proof of the discrete time Bernoulli $q$-TASEP duality in \cite{BorodinCorwin2013discrete}. Indeed, setting $\nu=0$ our exclusion process becomes the Bernoulli $q$-TASEP.

\begin{theorem}\label{thmimplicit}
The $J=1$ higher spin exclusion process with transition operator $\highspinTASEP{\alpha}{q\alpha}$ is dual to the space reversed $J=1$ higher spin zero range process with transition operator $\revhighspinBoson{\alpha}{q\alpha}$ with respect to $\HH(\vec{x},\vec{y})$. Precisely,
$$
\highspinTASEP{\alpha}{q\alpha} \HH = \HH \big(\revhighspinBoson{\alpha}{q\alpha}\big)^{T},
$$
where the equality holds for all matrix elements indexed by $\vec{x}\in \Xinf$ well-adapted to $\HH$ and all $\vec{y}\in \Yspace{k}$.
\end{theorem}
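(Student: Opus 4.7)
The plan is to follow the template of the Bernoulli $q$-TASEP duality argument of \cite{BorodinCorwin2013discrete}, to which our statement reduces upon setting $\nu=0$, and carry it through the more general four-weight $R$-matrix. First I unpack both sides probabilistically: the left-hand matrix element equals $\EE_{\vec{x}}[\HH(\vec{x}(1),\vec{y})]$ where $\vec{x}(1)$ is one exclusion step from $\vec{x}$, while the right-hand side equals $\EE_{\vec{y}}[\HH(\vec{x},\vec{y}(1))]$ with $\vec{y}(1)$ one step of the space-reversed zero range process (the transpose combined with the convention $(Af)(i)=\sum_j A(i,j)f(j)$ yields ordinary action from the $\vec{y}$ side). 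The well-adaptedness of $\vec{x}$ to $\HH$, together with the stable-at-infinity machinery of Definition \ref{deftransprobJ1ZRP}, ensures both expectations converge.

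Next I plan to rewrite each side as a single nested sum over vertex weights along, respectively, a column and a row of the same two-dimensional $R$-matrix lattice. Using Definition \ref{deftransprobJ1AEP} and the fact that for $J=1$ each jump $d_j=x_j(1)-x_j$ lies in $\{0,1\}$, the LHS becomes a telescoping product whose $j$-th factor is
\[
\sum_{d_j\in\{0,1\}}\bernw{1}{q}{\nu}{\alpha}\bigl(x_{j-1}+d_{j-1}-x_j-1,\,d_{j-1};\,x_{j-1}+d_{j-1}-x_j-d_j-1,\,d_j\bigr)\,q^{d_j y_j},
\]
with boundary vertex $\bernw{1}{q}{\nu}{\alpha}(\infty,0;\infty,d_1)$ at $j=1$. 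In parallel, Definition \ref{deftransprobJ1ZRP} and the identity $\HH(\vec{x},\vec{y}')/\HH(\vec{x},\vec{y})=\prod_i q^{(x_i+i)(y_i'-y_i)}$ cast the RHS as an analogous nested sum, but with the horizontal ($j$) and vertical ($i$) edge variables of the shared $R$-matrix swapped and with horizontal-arrow $q$-weights encoded by the exponents $x_i+i$ coming from $\HH$.

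The heart of the proof is then a one-vertex identity that trades an exclusion-chain step for a reversed-zero-range-chain step; in the $J=1$ case this reduces to a short direct verification over the four nonzero weights of \eqref{eq:bernweights}, and iterating it propagates equality across the full line by telescoping. For $\vec{y}\in\Yspace{k}$ only finitely many zero range sites are active so the zero range nested sum terminates, while well-adaptedness of $\vec{x}$, via Corollary \ref{cor:Wmax}, controls the potentially infinite exclusion chain.

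The main obstacle I anticipate is the exponent bookkeeping in the local vertex identity: the shift $x_i+i$ in $\HH$ interacts with the particle-conservation constraint $i_1+j_1=i_2+j_2$ at every vertex, and the multiplicative telescoping factor one picks up per vertex must be tracked so that its cumulative product reproduces the correct reversed-zero-range transition structure (equivalently, after Corollary \ref{cor:secrat}, the correct ratio of $q$-Hahn transition operators). The algebra at each individual vertex is modest; keeping the shifts and labels aligned all the way across the line, and handling the boundary vertex at the rightmost exclusion particle consistently with the leftmost active zero range site, is where the real care is needed.
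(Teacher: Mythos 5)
Your plan's central mechanism --- a one-vertex identity that directly trades a step of the exclusion chain for a step of the reversed zero range chain, verified over the four weights of \eqref{eq:bernweights} and then telescoped across the line --- is not available, and this is precisely why the paper (and the Bernoulli $q$-TASEP argument of \cite{BorodinCorwin2013discrete} that you cite as your template) does not argue this way. What the local computation actually yields is the \emph{implicit} relation
\begin{equation*}
\big(\qHahnH{q}{\nu}{\alpha} I_{t+1}\big)(\vec{y}) = \big(\qHahnH{q}{\nu}{q\alpha} I_{t}\big)(\vec{y}), \qquad I_t(\vec{y}) = \EE\big[\HH(\vec{x}(t),\vec{y})\big].
\end{equation*}
Concretely, one conditions on the history of the exclusion particles and shows that the $\varphi_{-\alpha}(s_i|y_i)$-weighted conditional expectation of $q^{(x_i(t+1)+i)(y_i-s_i)}q^{(x_{i-1}(t+1)+i-1)s_i}$ equals the corresponding $\varphi_{-q\alpha}(s_i|y_i)$-weighted unevolved quantity; the verification uses the recursions \eqref{varphi_property_1} and \eqref{varphi_property_2} for the $q$-Hahn weights, not merely the four $R$-matrix entries. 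Nothing in this computation identifies a reversed zero range step: the operators appearing on both sides of the implicit relation are the $q$-Hahn operators at parameters $\alpha$ and $q\alpha$, and there is no local identity in which $\revhighspinBoson{\alpha}{q\alpha}$ itself appears vertex by vertex.

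The passage from the implicit relation to the stated duality is not exponent bookkeeping but a genuine spectral step: one must invert $\qHahnH{q}{\nu}{\alpha}$ and invoke Corollary \ref{cor:secrat}, i.e.\ $\revhighspinBoson{\alpha}{q\alpha} = \big(\qHahnH{q}{\nu}{\alpha}\big)^{-1}\qHahnH{q}{\nu}{q\alpha}$ on $\Wc^{k}_{\max}$. This is exactly where the well-adaptedness hypothesis on $\vec{x}$ is consumed --- it guarantees that $\vec{y}\mapsto\HH(\vec{x},\vec{y})$ lies in $\Wc^{k}_{\max}$ so the Plancherel inversion applies --- and not, as you suggest, to control an infinite exclusion chain (for $\vec{y}\in\Yspace{k}$ only finitely many factors of $\HH$ are nontrivial, so the exclusion side is automatically finite). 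You mention the ratio of $q$-Hahn operators only parenthetically, as an equivalent way to track telescoping factors; in fact it is the load-bearing step, and without it your direct telescoping scheme has no identity to telescope.
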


\begin{proof}
We will demonstrate the following `implicit' version of the self-duality. Consider the exclusion process $\vec{x}(\cdot)$ from Definition \ref{deftransprobJ1AEP} with some (possibly random) initial data $\vec{x}(0)\in \Xinf$ and define
$$
I_t(\vec{y}) = \EE\Big[\HH\big(\vec{x}(t),\vec{y}\big)\Big]
$$
where the expectation is over the evolution of $\vec{x}(\cdot)$ (as well as the initial data, if it is random), and $\vec{y}\in \Yspace{k}$ for some $k\in \Z_{\geq 1}$. Then, assuming the initial data is such that $I_t(\vec{y})$ is everywhere finite, we claim that following equality
\begin{align}\label{eq:qhahnim}
\big(\qHahnH{q}{\nu}{\alpha} I_{t+1}\big)(\vec{y}) = \big(\qHahnH{q}{\nu}{q\alpha} I_t\big)(\vec{y}),
\end{align}
holds for all $t\in \Z_{\geq 0}$ and all $\vec{y}\in \Yspace{k}$.

Before proving \eqref{eq:qhahnim}, let us observe how it readily leads to the proof of the theorem. In matrix notation, \eqref{eq:qhahnim} implies that
$$
\highspinTASEP{\alpha}{q\alpha}\HH (\qHahnH{q}{\nu}{\alpha})^T =  \HH (\qHahnH{q}{\nu}{q\alpha})^T,
$$
where we note that $\HH$ is easily seen to be in the domain of these transition operators. Since in the hypotheses of the theorem we have assumed that $\vec{x}\in \Xinf$ well-adapted to $\HH$, we can appeal to Corollary \ref{cor:secrat} (which shows that $\revhighspinBoson{\alpha}{q\alpha} = \big(\qHahnH{q}{\nu}{\alpha}\big)^{-1}\qHahnH{q}{\nu}{q\alpha}$ when acting on function in $\Wc^{k}_{\max}$) to complete the proof of the theorem.

Thus, it remains to demonstrate \eqref{eq:qhahnim} and the remaining portion of this proof is devoted to that goal.

\medskip

Let $N$ be the smallest integer such that all $y_i=0$ for $i>N$.
Note that by the convention on $\HH$ being zero if $y_i>0$ for any $i\leq 0$, it suffices to restrict the product $i\leq N$ to $0\leq i\leq N$ and modify the definition of $\qHahnH{q}{\nu}{\alpha}$ to only include terms $\big[\qHahnA{q}{\nu}{\alpha}\big]_i$ for $1\leq i\leq N$. In other words, we can treat 0 as a sink since our functional becomes 0 for all $i\leq 0$. We may now argue similarly to \cite{BorodinCorwin2013discrete}.

For $j\ge1$ and $t\ge0$, let $\sigma^{j}_{t}$ denote the sigma-algebra generated by the
	random variables $x_{1}(t),\ldots,x_{j}(t)$.
	Conditioning on the history of the whole process up to time $t$,
	we will show that, as $\sigma^{N}_{t}$ measurable
	random variables,
	\begin{align}
		\qHahnH{q}{\nu}{\alpha}
		\EE\left[\prod_{i=0}^{N}q^{(x_i(t+1)+i)y_i}\,\big\vert\, \sigma^{N}_{t}\right]
		=
		\qHahnH{q}{\nu}{q\alpha}
		\prod_{i=0}^{N}q^{(x_i(t)+i)y_i}.
		\label{Bernoulli_duality_proof1}
	\end{align}
	
	Since operators $\qHahnH{q}{\nu}{\alpha}$
	and $\qHahnH{q}{\nu}{q\alpha}$
	in both sides have a sequential structure
	(corresponding to first moving $s_1$ particles from $y_1$ to $y_0$,
	then $s_2$ particles from $y_2$ to $y_1$, etc.),
	we can further condition on what happened to
	particles $x_1,\ldots,x_{i-1}$ during time step $t\to t+1$
	(for any $i=1,\ldots,N$).
	We will show that the relevant contributions
	to both sides of \eqref{Bernoulli_duality_proof1}
	behave as they should (i.e., the
	parameter $\alpha$ in the operator at time $t+1$ is replaced by $q \alpha$ at time $t$). That is, we will
	show that for $i\ge 2$,
	\begin{align}\label{Bernoulli_duality_proof2}
		&
		\sum_{s_i=0}^{y_i}
		\varphi_{-\alpha}(s_i|y_i)
		\EE\left[
		q^{(x_{i}(t+1)+i)(y_i-s_i)}
		q^{(x_{i-1}(t+1)+i-1)s_i}
		\,\big\vert\, \sigma_{t}^{N}, \sigma_{t+1}^{i-1}\right]
		\\&\hspace{140pt}\nonumber
		=\sum_{s_i=0}^{y_i}
		\varphi_{-q\alpha}(s_i|y_i)
		q^{(x_{i}(t)+i)(y_i-s_i)}
		q^{(x_{i-1}(t)+i-1)s_i},
	\end{align}
	and for $i=1$,
	\begin{align}\label{Bernoulli_duality_proof3}
		\varphi_{-\alpha}(0|y_1)
		\EE\left[
		q^{(x_{1}(t+1)+1)y_1}
		\,\big\vert\, \sigma_{t}^{N}\right]
		=\varphi_{-q\alpha}(0|y_1)
		q^{(x_{1}(t)+1)y_1}.
	\end{align}
	First, note that \eqref{Bernoulli_duality_proof3} is
	straightforward: conditioned on the knowledge of $x_1(t)$,
	the first particle jumps to the right by one with probability
	$\alpha/(1+\alpha)$ and stays put with probability $1/(1+\alpha)$.
	Therefore,
	\begin{align*}
		\varphi_{-\alpha}(0|y_1)
		\EE\left[
		q^{(x_{1}(t+1)+1)y_1}
		\,\big\vert\, \sigma_{t}^{N}\right]
		&=
		\varphi_{-\alpha}(0|y_1)
		\left(
		\frac{\alpha}{1+\alpha}q^{(x_{1}(t+1)+1)y_1}q^{y_1}
		+
		\frac{1}{1+\alpha}q^{(x_{1}(t+1)+1)y_1}
		\right)\\&=
		\varphi_{-q\alpha}(0|y_1)
		q^{(x_{1}(t)+1)y_1}.
	\end{align*}
	Here we have used $s=0$
	case of \eqref{varphi_property_2}
	with $\mu=-\alpha$,
	which reads
	\begin{align*}
		\varphi_{-q \alpha}(0|y)
		&=
		\frac{1}{1+\alpha}
		(1+\alpha q^{y-s})
		\varphi_{-\alpha}(0|y).
	\end{align*}

	To show \eqref{Bernoulli_duality_proof2},
	denote by $I$ the indicator of the event that
	$x_{i-1}(t+1)=x_{i-1}(t)+1$, i.e., that the particle $x_{i-1}$
	has jumped to the right by one during time step $t\to t+1$.
	This indicator is $\sigma^{i-1}_{t+1}$-measurable,
	and it will help us to compute the conditional expectation
	in the left-hand side of \eqref{Bernoulli_duality_proof2}.
	Also for any $r,s\ge0$ denote
	$Z_{r,s}:=q^{(x_i(t)+i)r}q^{(x_{i-1}(t)+i-1)s}$.
	Using the definition of the dynamics
	of $\vec{x}$, we can write
	\begin{align*}
		&\EE\Big[
		q^{(x_i(t+1)+i)r}q^{(x_{i-1}(t+1)+i-1)s}\,\big\vert\, \sigma^{N}_{t},\sigma^{i-1}_{t+1}\Big]
		\\&\hspace{20pt}=
		Iq^{s}Z_{r,s}
		\left(
		q^{r}
		\frac{\alpha+\nu q^{x_{i-1}(t)-x_{i}(t)-1}}{1+\alpha}+
		\frac{1-\nu q^{x_{i-1}(t)-x_{i}(t)-1}}{1+\alpha}\right)
		\\&\hspace{60pt}+
		(1-I)Z_{r,s}
		\left(
		q^{r}\frac{\alpha(1-q^{x_{i-1}(t)-x_{i}(t)-1})}{1+\alpha}
		+
		\frac{1+\alpha q^{x_{i-1}(t)-x_{i}(t)-1}}{1+\alpha}
		\right).
	\end{align*}
	Noting that $q^{x_{i-1}(t)-x_{i}(t)-1}Z_{r,s}=Z_{r-1,s+1}$,
	we can simplify the right-hand side above to
	\begin{align*}
		&=\frac{I}{1+\alpha}
		\Big(
		(\alpha q^{r+s}+q^{s}-\alpha q^{r}-1)Z_{r,s}
		+(\nu q^{r+s}-\nu q^{s}+\alpha q^{r}-\alpha)Z_{r-1,s+1}
		\Big)
		\\&\hspace{40pt}+\frac{1}{1+\alpha}
		\Big((\alpha q^{r}+1)Z_{r,s}+
		(-\alpha q^{r}+\alpha)Z_{r-1,s+1}\Big)
	\end{align*}
	(note that when $r=0$, the coefficient by $Z_{r-1,s+1}$
	is zero.)
	The left-hand side of \eqref{Bernoulli_duality_proof2}
	is then equal to the sum (over $s_i$) of
	$\varphi_{-\alpha}(s_i|y_i)$
	times
	the above expressions
	with $r=y_i-s_i$ and $s=s_i$.
	That is, left-hand side of \eqref{Bernoulli_duality_proof2}
	takes the form
	\begin{align*}
		&\frac{I}{1+\alpha}
		\sum_{s_i=0}^{y_i}Z_{y_i-s_i,s_i}
		\Big[
		\varphi_{-\alpha}(s_i|y_i)
		(\alpha q^{y_i}+q^{s_i}-\alpha q^{y_i-s_i}-1)
		\\&\hspace{120pt}+
		\varphi_{-\alpha}(s_i-1|y_i)
		(\nu q^{y_i}-\nu q^{s_i-1}+\alpha q^{y_i-s_i+1}-\alpha)
		\Big]
		\\&+
		\frac{1}{1+\alpha}
		\sum_{s_i=0}^{y_i}
		Z_{y_i-s_i,s_i}\Big[
		\varphi_{-\alpha}(s_i|y_i)
		(\alpha q^{y_i-s_i}+1)
		+\varphi_{-\alpha}(s_i-1|y_i)
		(-\alpha q^{y_i-s_i+1}+\alpha)
		\Big].
	\end{align*}
	By properties of $\varphi_{-\alpha}$
	\eqref{varphi_property_1}, \eqref{varphi_property_2},
	the expression in the square brackets in the first sum vanishes
	for any $s_i$; and
	the expression in the square brackets in the second sum
	is equal to $\varphi_{-q\alpha}(s_i|y_i)$.
	This yields \eqref{Bernoulli_duality_proof2}.
		
	\smallskip

	Having now established \eqref{Bernoulli_duality_proof2} and \eqref{Bernoulli_duality_proof3},
	we can now prove \eqref{Bernoulli_duality_proof1}. We
	have (assuming $y_0=0$, otherwise \eqref{Bernoulli_duality_proof1} is
	trivial)
	\begin{align*}
		&\big[\qHahnA{q}{\nu}{\alpha}\big]_1 \ldots \big[\qHahnA{q}{\nu}{\alpha}\big]_{N-1}
		\big[\qHahnA{q}{\nu}{\alpha}\big]_N
		\EE\bigg[
		q^{(x_1(t+1)+1)y_1}
		\EE\Big[q^{(x_2(t+1)+1)y_2}
		\ldots
		\\&\hspace{20pt}\ldots
		\EE\Big[
		q^{(x_{N-1}(t+1)+N-1)y_{N-1}}
		\EE\Big[
		q^{(x_{N}(t+1)+N)y_{N}}
		\,\big\vert\, \sigma^{N}_{t},\sigma^{N-1}_{t+1}
		\Big]
		\,\big\vert\, \sigma^{N}_{t},\sigma^{N-2}_{t+1}
		\Big]\ldots
		\,\big\vert\, \sigma^{N}_{t},\sigma^{1}_{t+1}
		\Big]
		\,\big\vert\, \sigma^{N}_{t}
		\bigg]
		\\&=
		\big[\qHahnA{q}{\nu}{\alpha}\big]_{1}\ldots
		\big[\qHahnA{q}{\nu}{\alpha}\big]_{N-1}
		\EE\bigg[
		q^{(x_1(t+1)+1)y_1}
		\EE\Big[q^{(x_2(t+1)+1)y_2}
		\ldots
		\EE\Big[
		q^{(x_{N-1}(t+1)+N-1)y_{N-1}}
		\\&\hspace{20pt}
		\sum_{s_N=0}^{y_N}
		\varphi_{-\alpha}(s_N|y_N)
		\EE\Big[
		q^{(x_{N}(t+1)+N)(y_{N}-s_N)}
		q^{(x_{N-1}(t+1)+N-1)s_{N}}
		\,\big\vert\, \sigma^{N}_{t},\sigma^{N-1}_{t+1}
		\Big]
		\,\big\vert\, \sigma^{N}_{t},\sigma^{N-2}_{t+1}
		\Big]\ldots
		\,\big\vert\, \sigma^{N}_{t},\sigma^{1}_{t+1}
		\Big]
		\,\big\vert\, \sigma^{N}_{t}
		\bigg]
		\\&=
		\big[\qHahnA{q}{\nu}{\alpha}\big]_{1}\ldots
		\big[\qHahnA{q}{\nu}{\alpha}\big]_{N-1}
		\EE\bigg[
		q^{(x_1(t+1)+1)y_1}
		\EE\Big[q^{(x_2(t+1)+1)y_2}
		\ldots\\&\hspace{20pt}\ldots
		\EE\Big[
		q^{(x_{N-1}(t+1)+N-1)y_{N-1}}
		\,\big\vert\, \sigma^{N}_{t},\sigma^{N-2}_{t+1}
		\Big]\ldots
		\,\big\vert\, \sigma^{N}_{t},\sigma^{1}_{t+1}
		\Big]
		\,\big\vert\, \sigma^{N}_{t}
		\bigg]
		\\&\hspace{20pt}\times\sum_{s_N=0}^{y_N}
		\varphi_{-q\alpha}(s_N|y_N)
		q^{(x_{N}(t)+N)(y_{N}-s_N)}
		q^{(x_{N-1}(t)+N-1)s_{N}}.
	\end{align*}
	The first equality is by definition, and the second equality is by an application of \eqref{Bernoulli_duality_proof2}
	corresponding to $i=N$,
	which leads to replacement of the operator
	$\big[\qHahnA{q}{\nu}{\alpha}\big]_N$
	(for time $t+1$) by the operator
	$\big[\qHahnA{q}{\nu}{q\alpha}\big]_N$
	(for time $t$).
	Continuing using
	\eqref{Bernoulli_duality_proof2} for $i=N-1,\ldots,2$
	and \eqref{Bernoulli_duality_proof3} for $i=1$,
	we arrive at the desired identity \eqref{Bernoulli_duality_proof1},
	and hence complete the proof of the theorem.
\end{proof}

We turn now to self-dualities of the zero range process. The proofs are considerably less involved and rely on an earlier discovered identity \cite[Proposition 1.2]{Corwin2014qmunu}.

\begin{theorem}\label{thm:Gduality}
The $J=1$ higher spin zero range process with transition operator $\highspinBoson{\alpha}{q\alpha}$ is dual to the space reversed process with generator $\revhighspinBoson{\alpha}{q\alpha}$ with respect to $\GG(\vec{g},\vec{y})$ as well as $\GGhat(\vec{g},\vec{y})$. Precisely,
$$
\highspinBoson{\alpha}{q\alpha} \GG = \GG \big(\revhighspinBoson{\alpha}{q\alpha}\big)^{T},
$$
where the equality holds for all matrix elements indexed by $\vec{g}\in \Gspace{k}$ and $\vec{y}\in \Yinf$, or by $\vec{g}\in \Ginf$ and $\vec{y}\in \Yspace{k}$.
Likewise,
$$
\highspinBoson{\alpha}{q\alpha} \GGhat = \GGhat \big(\revhighspinBoson{\alpha}{q\alpha}\big)^{T},
$$
where the equality holds for all matrix elements indexed by $\vec{g}\in \Ginf$ and $\vec{y}\in \Yinf$, provided both sides of the above equation are finite.
\end{theorem}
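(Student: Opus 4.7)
The plan is to deduce both identities from the self-duality of the $q$-Hahn zero range process \cite[Proposition 1.2]{Corwin2014qmunu} combined with the factorization of the higher-spin transition operators supplied by Corollary \ref{cor:secrat}. The $q$-Hahn self-duality provides, for any parameter $\beta$ and on an appropriate function class,
$$\qHahnHR{q}{\nu}{\beta}\,\GG = \GG\,(\qHahnH{q}{\nu}{\beta})^{T}.$$
Corollary \ref{cor:secrat} factors $\highspinBoson{\alpha}{q\alpha} = (\qHahnHR{q}{\nu}{\alpha})^{-1}\qHahnHR{q}{\nu}{q\alpha}$ and $\revhighspinBoson{\alpha}{q\alpha} = (\qHahnH{q}{\nu}{\alpha})^{-1}\qHahnH{q}{\nu}{q\alpha}$ on $\Wc^{k}_{\max}$, and furnishes pairwise commutativity of $\qHahnH{q}{\nu}{\beta}$ in $\beta$.

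Concretely, starting from $\highspinBoson{\alpha}{q\alpha}\,\GG$, I would use Corollary \ref{cor:secrat} to rewrite it as $(\qHahnHR{q}{\nu}{\alpha})^{-1}\qHahnHR{q}{\nu}{q\alpha}\,\GG$, then apply the $q$-Hahn duality first with parameter $q\alpha$ and then (in inverted form) with parameter $\alpha$, moving both $q$-Hahn factors across $\GG$. This produces $\GG\,((\qHahnH{q}{\nu}{\alpha})^{-1})^{T}(\qHahnH{q}{\nu}{q\alpha})^{T}$. Since the two $\qHahnH$ factors commute (hence so do their inverses and transposes), reversing the order and invoking $(AB)^{T}=B^{T}A^{T}$ collapses this to $\GG\,((\qHahnH{q}{\nu}{\alpha})^{-1}\qHahnH{q}{\nu}{q\alpha})^{T} = \GG\,(\revhighspinBoson{\alpha}{q\alpha})^{T}$, which is the desired identity. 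A preliminary check is that $\GG$ is well-adapted, i.e.\ for $\vec{g}\in\Gspace{k}$ the function $\GG(\vec{g},\cdot)$, and for $\vec{y}\in\Yspace{k}$ the space reversal of $\GG(\cdot,\vec{y})$, both lie in $\Wc^{k}_{\max}$; this parallels Remark \ref{rem:welladapt} and is immediate from the exponential form $q^{\sum_{i>j}g_i y_j}$.

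The $\GGhat$ identity follows from the $\GG$ identity via a particle-counting argument. On the sector $\vec{g}\in \Gspace{k}$, $\vec{y}\in \Yspace{k'}$, relation \eqref{eq:kkprime} gives $\GG = q^{kk'}\GGhat$, and since both $\highspinBoson{\alpha}{q\alpha}$ and $\revhighspinBoson{\alpha}{q\alpha}$ preserve particle numbers, the scalar $q^{kk'}$ is common to both sides of the already-established $\GG$ duality and cancels, leaving the $\GGhat$ duality on each such sector. To extend to the full case $\vec{g}\in\Ginf$, $\vec{y}\in\Yinf$ stated in the theorem, I would approximate by the truncations $\vec{g}\big\vert_{M}$ from Definition \ref{deftransprobJ1ZRP} and take $M\to\infty$, invoking the finiteness hypothesis of the theorem to justify passage to the limit on both sides.

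The main obstacle I anticipate is bookkeeping around transposes, inverses, and commutativity of the $q$-Hahn operators---specifically, verifying that the commutativity supplied by Corollary \ref{cor:secrat} survives transposition (which it does, since commuting operators always have commuting transposes) and that the inverses make sense in the right direction on $\Wc^{k}_{\max}$. A secondary issue is the inverse-limit step for $\GGhat$ on infinite configurations, which should fit within the stability-at-infinity framework of Definition \ref{deftransprobJ1ZRP}; the finiteness assumption built into the theorem is precisely what guarantees the limiting procedure is legitimate.
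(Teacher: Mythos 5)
Your overall strategy---combine the $q$-Hahn self-duality of \cite[Proposition 1.2]{Corwin2014qmunu} with the factorization and commutativity in Corollary \ref{cor:secrat}, then handle $\GGhat$ via \eqref{eq:kkprime}---is exactly the route the paper takes, so the algebraic core of your argument is sound. However, there is one genuine gap: your ``preliminary check'' that well-adaptedness is ``immediate from the exponential form $q^{\sum_{i>j}g_iy_j}$'' is false in precisely the cases the theorem is about. For $\vec{g}\in\Gspace{k'}$ with finitely many particles, the function $\vec{y}\mapsto \GG(\vec{g},\vec{y})=\prod_{i=1}^{k'}q^{\Nup_{n_i+1}(\vec{g})}$ does not decay as the particles of $\vec{y}$ recede to $-\infty$; it tends to the nonzero constant $q^{kk'}$. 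Consequently the direct transform $\Pld$ of this function does not even converge absolutely, and the function does not lie in $\Wc^{k}_{\max}$, so Corollary \ref{cor:secrat} cannot be applied to it directly. (Only configurations with $\sum_i g_i=\infty$ are well-adapted to $\GG$, because then $\Nup_{n_i+1}(\vec{g})\to\infty$ forces exponential decay.) The same problem occurs symmetrically for finite $\vec{y}$. The paper closes this gap with a separate approximation: it replaces $\vec{g}\in\Gspace{k'}$ by $\vec{g}^M=+\infty\,\mathbf{1}_M+\vec{g}$ (infinitely many particles added at a site $M$), for which the duality holds by the well-adapted case, and then sends $M\to-\infty$, using that the added particles can only influence $\GG$ by overtaking the left-most particle of $\vec{y}$, an event whose probability vanishes because each such step costs a $(0,1;0,1)$-vertex of weight strictly less than one. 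Your proposal needs some such argument; note that this is an \emph{augmentation} at $-\infty$, not the truncation $\vec{g}\big\vert_M$ you invoke later, which goes in the wrong direction for this purpose.

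Two smaller remarks. First, for the extension of the $\GGhat$ duality to general $\vec{g}\in\Ginf$, $\vec{y}\in\Yinf$ with both sides finite, no limiting procedure is actually needed: finiteness forces only finitely many particles of $\vec{g}$ to lie to the left of some particle of $\vec{y}$ (and vice versa), and discarding the irrelevant particles changes neither side, because the left-to-right update of $\vec{g}$ and right-to-left update of $\vec{y}$ cannot make the discarded particles contribute. This exact reduction is cleaner than the truncation-and-limit you sketch, which would require an additional interchange-of-limits justification. Second, your bookkeeping of transposes and inverses is fine as formal algebra, but each application of $(\qHahnHR{q}{\nu}{\alpha})^{-1}$ through $\GG$ is itself only licensed on $\Wc^{k}_{\max}$, so the well-adaptedness issue above is not a peripheral technicality---it is the hypothesis under which every step of your main computation is valid.
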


\begin{proof}
We prove this theorem in a few stages. Initially we deduce an `implicit' form of the $\GG$ duality in \eqref{eq:Gduality}. From this we deduce the $\GG$ duality claimed in the theorem. Finally, we use the $\GG$ duality to deduce the $\GGhat$ duality.

We begin by proving the following implicit form of the $\GG$ duality. For any $k\in \Z_{\geq 1}$,
	\begin{align}\label{eq:Gduality}
		\qHahnHR{q}{\nu}{q\alpha}
		\GG\big(\qHahnH{q}{\nu}{\alpha}\big)^{T}
		=
		\qHahnHR{q}{\nu}{\alpha}
		\GG\big(\qHahnH{q}{\nu}{q\alpha}\big)^{T},
	\end{align}
where the equality holds for all matrix elements with $\vec{g}\in \Gspace{k}$ and $\vec{y}\in \Yinf$ well-adapted to $\GG$, or with $\vec{g}\in \Ginf$ well-adapted to $\GG$ and $\vec{y}\in \Yspace{k}$. We will assume the second case below, though the first case follows similarly.
%

To show (\ref{eq:Gduality}), we demonstrate  first that 	
\begin{align}\label{qHahn_self_duality}
		\GG\big(\qHahnH{q}{\nu}{\mu}\big)^{T}
		=
		\qHahnHR{q}{\nu}{\mu}
		\GG
\end{align}
for all $\vec{g}\in \Ginf$ and $\vec{y}\in \Yinf$. This follows from the fact that for each $i\in \Z$,
	\begin{align*}
		\GG\big( [\qHahnH{q}{\nu}{\mu}]_{i}\big)^T (\vec{g},\vec{y}) &=
		\sum_{s_i=0}^{y_i}
		\varphi_{\mu}(s_i|y_i)
		\GG(\vec{g},\vec{y}^{s_i}_{i,i-1})
		=
		\sum_{s_i=0}^{y_i}
		\varphi_{\mu}(s_i|y_i)
		q^{s_ig_i}\GG(\vec{g},\vec{y})
		\\&=
		\sum_{r_i=0}^{g_i}
		\varphi_{\mu}(r_i|g_i)
		q^{r_iy_i}\GG(\vec{g},\vec{y})
		=
		\sum_{r_i=0}^{g_i}
		\varphi_{\mu}(r_i|g_i)
		\GG(\vec{g}^{r_i}_{i,i+1},\vec{y})
		\\&=
		[\qHahnHR{q}{\nu}{\mu}]_{i}
		\GG(\vec{g},\vec{y}).
	\end{align*}
In the first and last equation above we have written the composition of operators followed by $(\vec{g},\vec{y})$ to denote the corresponding matrix element. The equality between the end of the first and beginning of the second line above relies on an identity proved in \cite[Proposition 1.2]{Corwin2014qmunu}.

From \eqref{qHahn_self_duality}, we find that
\begin{align*}
		\qHahnHR{q}{\nu}{q\alpha}
		\GG\big(\qHahnH{q}{\nu}{\alpha}\big)^{T}
		=
        \GG\big (\qHahnH{q}{\nu}{q\alpha})^T(\qHahnH{q}{\nu}{\alpha}\big)^{T}
		=
        \GG\big (\qHahnH{q}{\nu}{\alpha})^T(\qHahnH{q}{\nu}{q\alpha}\big)^{T}
        =
		\qHahnHR{q}{\nu}{\alpha}
		\GG\big(\qHahnH{q}{\nu}{q\alpha}\big)^{T}.
\end{align*}
The only step in this deduction which requires justification is the commutation relation $(\qHahnH{q}{\nu}{q\alpha})^T(\qHahnH{q}{\nu}{\alpha}\big)^{T} = (\qHahnH{q}{\nu}{\alpha})^T(\qHahnH{q}{\nu}{q\alpha}\big)^{T}$. This, however, follows from Corollary \ref{cor:secrat} and the fact that we have assumed that $\vec{g}\in \Ginf$ is well-adapted to $\GG$ and $\vec{y}\in \Ynspace{k}$. Thus, we have established \eqref{eq:Gduality}.

Under the assumption that $\vec{g}\in \Ginf$ is well-adapted to $\GG$ and $\vec{y}\in \Yspace{k}$ we may apply the first two identities from Corollary \ref{cor:secrat} to equation \eqref{eq:Gduality} to deduce the $\GG$ duality statement in the theorem. The theorem asks for this duality to hold for all $\vec{g}\in \Ginf$ (without the well-adapted condition). Indeed, all $\vec{g}\in \Ginf$ with $\sum_{i\in \Z} g_i = \infty$ are well-adapted to $\GG$. Thus, it remains to show that we can extend the duality to $\vec{g}\in \Gspace{k'}$ for any $k'\in \Z_{\geq 1}$.
For $\vec{g}\in \Gspace{k'}$ fixed, and $M$ sufficiently negative (so as to be less than the location of the most negative particle in $\vec{g}$) let $\vec{g}^M = +\infty \mathbf{1}_{M} + \vec{g}$, where $\mathbf{1}_{M}$ is the vector of all zeros, except a one at $M$. In other words, $\vec{g}^M$ is equivalent to $\vec{g}$ except with an infinite number of particles added at site $M$. From the above argument we know that the $\GG$ duality in the theorem holds for matrix elements $\vec{g}^M\in \Ginf$ and any $\vec{y}\in\Yspace{k}$. It remains to show that both
\begin{align*}
\lim_{M\to -\infty} \highspinBoson{\alpha}{q\alpha} \GG(\vec{g}^M, \vec{y}) &=\highspinBoson{\alpha}{q\alpha} \GG(\vec{g}, \vec{y}), \\
\lim_{M\to -\infty} \GG \big(\revhighspinBoson{\alpha}{q\alpha}\big)^{T}(\vec{g}^M, \vec{y}) &= \GG \big(\revhighspinBoson{\alpha}{q\alpha}\big)^{T}(\vec{g}, \vec{y}).
\end{align*}
Let us justify the first limit, as the second follows similarly. Call $f(\vec{g}^M) = \GG(\vec{g}^M,\vec{y})$ and recall that $\big(\highspinBoson{\alpha}{q\alpha}f\big)(\vec{g}^M)$ gives the expectation of $f$ after one step of the zero range process started from initial data $\vec{g}^{M}$. The only way that the infinite number of particles at $M$ can affect the value of $f$ is if one of them makes its way past the left-most particle in $\vec{y}$. However, this requires a large number of $(0,1;0,1)$-vertices. Since the weight of these vertices is strictly less than one, this probability goes to zero as $M\to -\infty$. Since the above defined $f$ is bounded by one, the desired convergence result clearly holds. This establishes the $\GG$ duality in the theorem.

Turning to the $\GGhat$ duality, let us first prove it for $\vec{g}\in \Gspace{k}$ and $\vec{y}\in \Yspace{k'}$ for some $k,k'\in \Z_{\geq 1}$. Recalling (\ref{eq:kkprime}) we have that
$\GGhat(\vec{g},\vec{y}) = q^{-kk'} \GG(\vec{g},\vec{y})$. Since $q^{-kk'}$ is a constant, multiplying the $\GG$ duality by it yields the $\GGhat$ duality. We now extend to all $\vec{g}\in\Ginf$ and $\vec{y}\in \Yinf$ such that both sides of the $\GGhat$ duality are finite. Under these conditions there must be a finite number (say $k$) of particles in $\vec{g}$ which lie to the left of some particle in $\vec{y}$, and likewise a finite number (say $k'$) of particles in $\vec{y}$ which lie to the right of some particle in $\vec{g}$. It is easy to see\footnote{The update for $\vec{g}$ according to $ \highspinBoson{\alpha}{q\alpha}$ is from left to right, and opposite for $\vec{y}$ according to $\revhighspinBoson{\alpha}{q\alpha}$. The rightward movement of the particles in $\vec{g}$ besides the $k$ left-most does not change the value of $\GG$, and likewise for the leftward movement of the particles in $\vec{y}$ besides the $k'$ right-most.} that replacing $\vec{g}$ with its $k$ left-most particles and replacing $\vec{y}$ with its $k'$ right-most particles, the value of the left-hand and right-hand sides of the $\GGhat$ duality identity are unchanged. This completes the proof of the duality and hence the theorem.
\end{proof}

\begin{rem}
The duality involving $\HH$ and $\GGhat$ can be made to look rather similar, though, to our understanding, they are not equivalent. From a state $\vec{x}\in \Xinf$ define its gaps via $\tilde{g}_i = x_{i-1}-x_i-1$. Then $x_i+i = -g_i-\cdots - g_{2} +x_1 +1$. Thus, up to this last term $x_1+1$ the duality functional $\HH(\vec{x},\vec{y})$ can be written in a similar form as $\GGhat$. We are not aware of a way to derive, for instance, the $\HH$ duality from the $\GGhat$ duality (the proof of the latter is considerably simpler).
\end{rem}

\begin{rem}[2019 update]
	The previous published version of the paper contained as Theorem 2.23 
	a number of additional incorrect duality claims
	involving more complicated duality functionals. This theorem and related claims in 
	Sections \ref{sec:case3}, \ref{sec:case4}, and \ref{sec:ssvm} were removed. 
	See \cite{CP_erratum} for an erratum which contains a counterexample and summarizes the changes.
	The correct $I=J=1$ version of such duality (for the stochastic six vertex model)
	was established recently in \cite{Lin2019}.
\end{rem}

\section{Fusion and self duality for \texorpdfstring{$J\in \Z_{\geq 1}$}{J>=1}}\label{sec:fus}

Fusion of $R$-matrices is a representation theoretic mechanism introduced in \cite{KirillovReshetikhin1987Fusion} to construct $R$-matrices with higher horizontal spin (i.e. $J\in \Z_{\geq 1}$) from those with $J=1$ while maintaining the diagonalizability of the associated transfer matrices. The procedure simplifies on $\Z$ in our case of stochastic $\Lmat$-matrices. We provide (in Section \ref{sec:fusionMFT}) a rather simple probabilistic proof using Markov functions theory. Fusion also naturally provides a recursion relation in $J$ for the higher spin $\Lmat$-matrix $\bernw{J}{q}{\nu}{\alpha}$. We record the recursion relation in Section \ref{sec:RR} and solve it explicitly in terms of $q$-Racah polynomials (or regularized terminating basic hypergeometric series) in Section \ref{sec:qRacah}. The self-dualities proved earlier in Section \ref{sec:implicitduality} immediately generalize to all $J\in \Z_{\geq 1}$. In Remark \ref{sec:analcont} we observe how our $\Lmat$-matrix can be analytically continued  so that $ q^J\alpha$ is replaced by an arbitrary $\beta\in \C$. We comment briefly on the implications, and develop this further in Section \ref{sec:spec}.

\begin{definition}[General $J$ higher spin zero range and exclusion process transition operators]\label{def:highJBosonTASEP}
For $J\in \Z_{\geq 1}$ define\footnote{The order does not matter since each operator is diagonalized in the same basis.}
$$
\highspinBoson{\alpha}{q^J\alpha} = \highspinBoson{\alpha}{q\alpha} \highspinBoson{q\alpha}{q^2\alpha} \cdots \highspinBoson{q^{J-1}\alpha}{q^J\alpha},
$$
and likewise $\revhighspinBoson{\alpha}{q^J\alpha}$. Also define
$$
\highspinTASEP{\alpha}{q^J\alpha} = \highspinTASEP{\alpha}{q\alpha} \highspinTASEP{q\alpha}{q^2\alpha} \cdots \highspinTASEP{q^{J-1}\alpha}{q^J\alpha}.
$$
These correspond to taking $J$ steps of the processes with parameters $\alpha, q\alpha,\ldots, q^{J-1}\alpha$.
\end{definition}

The left-hand side of Figure \ref{fig:BRfix} illustrates the sequential composition of the $ \highspinBoson{q^{j-1}\alpha}{q^j\alpha}$ operators for $j=1,\ldots, J$. Since each transition operator is stochastic, their product is as well. Moreover, it follows from Proposition \ref{prophighspindiagJ1} that:

\begin{corollary}\label{cor:higherJeig}
If $\big\vert \tfrac{1-z_i}{1-\nu z_i}\, \tfrac{q^j\alpha+\nu}{1+q^j\alpha}\big\vert<1$
for $1\leq i\leq k$ and $1\leq j\leq J-1$, then
\begin{equation}\label{eqn:higherJeig}
\big(\revhighspinBoson{\alpha}{q^J\alpha} \Psil_{\vec{z}}\big)(\vec{n}) = \prod_{i=1}^{k} \frac{1+q^J\alpha z_i}{1+ \alpha z_i} \, \Psil_{\vec{z}}(\vec{n}).
\end{equation}
\end{corollary}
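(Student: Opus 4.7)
The plan is to use Definition \ref{def:highJBosonTASEP} to express $\revhighspinBoson{\alpha}{q^J\alpha}$ as the ordered composition of $J$ operators $\revhighspinBoson{q^{j-1}\alpha}{q^j\alpha}$, one for each $j=1,\ldots,J$, and then apply Proposition \ref{prophighspindiagJ1} iteratively to each factor acting on $\Psil_{\vec{z}}$.

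First I would observe that Proposition \ref{prophighspindiagJ1} with $\alpha$ replaced by $q^{j-1}\alpha$ requires exactly the bound $\big|\tfrac{1-z_i}{1-\nu z_i}\,\tfrac{q^{j-1}\alpha+\nu}{1+q^{j-1}\alpha}\big|<1$ for $1\le i\le k$. The convergence condition imposed in the statement of the corollary furnishes precisely this bound for each $j=1,\ldots,J$ that appears in the composition (together with the $j=0$ case needed to apply the proposition to the first factor $\revhighspinBoson{\alpha}{q\alpha}$). Invoking Proposition \ref{prophighspindiagJ1} with parameter $q^{j-1}\alpha$ shows that $\Psil_{\vec{z}}$ is an eigenfunction of $\revhighspinBoson{q^{j-1}\alpha}{q^j\alpha}$ with scalar eigenvalue $\prod_{i=1}^k \tfrac{1+q^j\alpha z_i}{1+q^{j-1}\alpha z_i}$; since the eigenvalue is a scalar, $\Psil_{\vec{z}}$ remains an eigenfunction after each factor is applied, and the composition eigenvalue is the product of the individual eigenvalues.

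Multiplying over $j=1,\ldots,J$ yields a telescoping product,
\[
\prod_{j=1}^{J} \prod_{i=1}^k \frac{1+q^j\alpha z_i}{1+q^{j-1}\alpha z_i} = \prod_{i=1}^k \frac{1+q^J\alpha z_i}{1+\alpha z_i},
\]
which is exactly the right-hand side of \eqref{eqn:higherJeig}.

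There is no serious obstacle: the corollary is pure bookkeeping on top of Proposition \ref{prophighspindiagJ1}. The only thing to be careful about is verifying that the uniform hypothesis on $\vec z$ indeed licenses the proposition at every link in the chain of parameters $\alpha,\,q\alpha,\,\ldots,\,q^{J-1}\alpha$, and that no analytic continuation is implicitly being performed when composing the $J$ factors (it is not, because Proposition \ref{prophighspindiagJ1} is being applied pointwise on $\Psil_{\vec{z}}$ and the composition is then a scalar rescaling of a single function).
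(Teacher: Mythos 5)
Your proof is correct and is exactly the argument the paper intends: decompose $\revhighspinBoson{\alpha}{q^J\alpha}$ via Definition \ref{def:highJBosonTASEP}, apply Proposition \ref{prophighspindiagJ1} with parameter $q^{j-1}\alpha$ to each factor, and telescope the eigenvalues. Your observation that the first factor also requires the $j=0$ bound $\bigl|\tfrac{1-z_i}{1-\nu z_i}\,\tfrac{\alpha+\nu}{1+\alpha}\bigr|<1$ (not literally covered by the stated range $1\le j\le J-1$) is a fair and correct reading of what the hypothesis must supply.
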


The of choice parameters $\alpha, q\alpha,\ldots, q^{J-1}\alpha$ implies the telescoping of the product of eigenvalues and hence the simple form of the result.

The following is an immediate corollary of the corresponding $J=1$ duality results contained in Theorems \ref{thmimplicit} and \ref{thm:Gduality}, along with the fact that the operators
$\highspinBoson{\alpha}{q\alpha}$ commute for different values of $\alpha$ (and the same fact for $\revhighspinBoson{\alpha}{q\alpha}$ and $\highspinTASEP{\alpha}{q\alpha}$). Recall the duality functionals from Definition \ref{def:dualityfunctionals}.

\begin{corollary}\label{cor:dualhigherJ}
For all $J\in \Z_{\geq 1}$ we have the following Markov dualities:
\begin{itemize}
\item For all matrix elements indexed by $\vec{x}\in \Xinf$ well-adapted to $\HH$ and all $\vec{y}\in \Yspace{k}$,
$$
\highspinTASEP{\alpha}{q^J\alpha} \HH = \HH \big(\revhighspinBoson{\alpha}{q^J\alpha}\big)^{T}.
$$
\item For all matrix elements indexed by $\vec{g}\in \Gspace{k}$ and $\vec{y}\in \Yinf$, or by $\vec{g}\in \Ginf$ and $\vec{y}\in \Yspace{k}$,
$$
\highspinBoson{\alpha}{q^J\alpha} \GG = \GG \big(\revhighspinBoson{\alpha}{q^J\alpha}\big)^{T}.
$$
\item For all matrix elements indexed by $\vec{g}\in \Ginf$ and $\vec{y}\in \Yinf$, provided both sides of the equation below are finite,
$$
\highspinBoson{\alpha}{q^J\alpha} \GGhat = \GGhat \big(\revhighspinBoson{\alpha}{q^J\alpha}\big)^{T}.
$$
\end{itemize}
\end{corollary}

\subsection{Fusion}\label{sec:fusionMFT}

\begin{figure}
\begin{center}
\includegraphics[scale=.6]{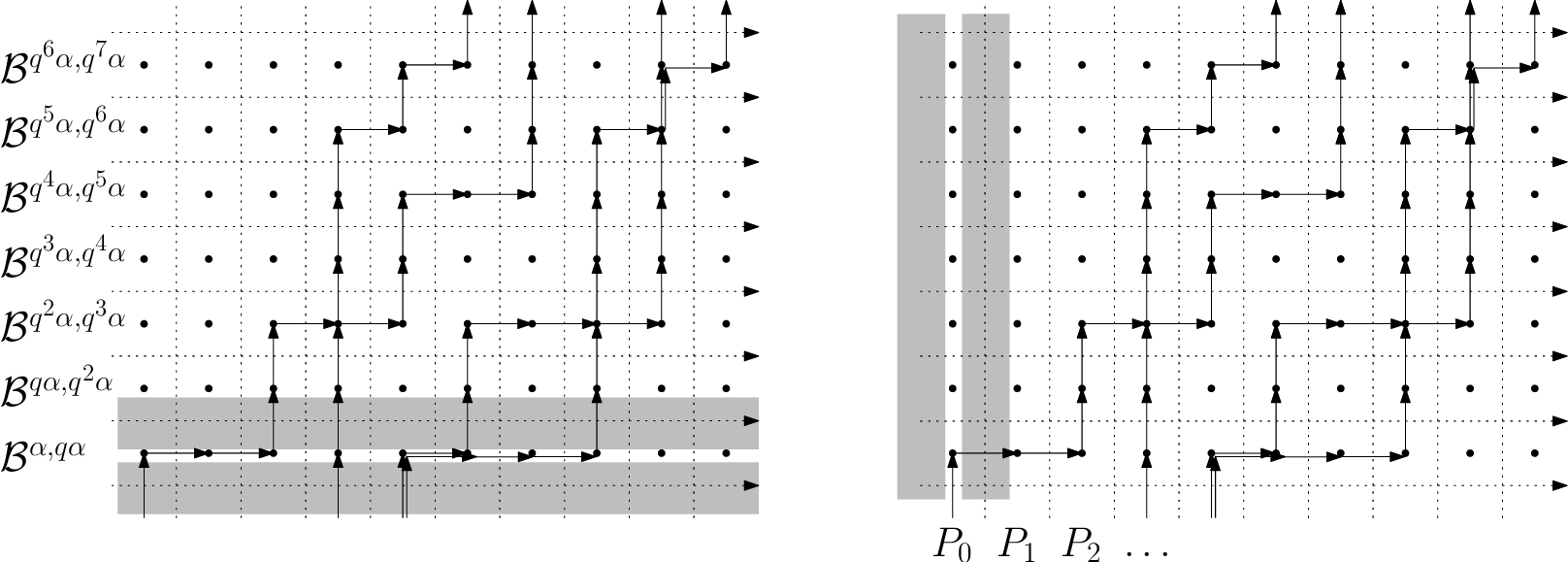}
\end{center}
\caption{Left: The trajectory (in terms of arrows) of the application of the Markov transition operator $\highspinBoson{\alpha}{q\alpha}$,  $\highspinBoson{q\alpha}{q^2\alpha}$, through $\highspinBoson{q^{J-1}\alpha}{q^J\alpha}$ (here $J=7$). The bottom grey row represents the input to $\highspinBoson{\alpha}{q\alpha}$ and the higher grey row the output. Right: Due to the sequential update rule for each $\highspinBoson{q^{j}\alpha}{q^{j+1}\alpha}$, instead of sampling row by row, one can also sample this trajectory sequentially by updating one column (such as indicated in grey) at a time from left to right.}
\label{fig:BRfix}
\end{figure}

%

We can describe the update procedure for the Markov chain with the transition operator $\highspinBoson{\alpha}{q^J\alpha}$ started in a state $\vec{g}\in \Gspace{k}$ in the following manner.
\begin{definition}
For $x\in \Z$, define the product of $\Lmat$-matrices
$$
\big[\bernwtensor{1}{q}{\nu}{\alpha}{J}\big]_{x,1} = [\bernw{1}{q}{\nu}{\alpha}]_{x,1}\cdots [\bernw{1}{q}{\nu}{q^{J-1}\alpha}]_{x,J}
$$
so that
$$
\big[\bernwtensor{1}{q}{\nu}{\alpha}{J}\big]_{x,1}: [\VI^I]_x \otimes [\VJ^1]_1\otimes\cdots \otimes [\VJ^1]_J  \to [\VI^I]_x \otimes [\VJ^1]_1\otimes\cdots \otimes [\VJ^1]_J
$$
has matrix elements
$$
\big[\bernwtensor{1}{q}{\nu}{\alpha}{J}\big]_{x,1}\big(g_x, h_{x,1}, \cdots, h_{x,J}; g_x', h_{x+1,1},\cdots, h_{x+1,J}\big)
$$
with $g_x,g_x'\in [\VI^I]_x$ and $h_{x,y}\in [\VJ^1]_y$ for $1\leq y\leq J$. These matrix elements represent the transition probabilities from inputs $g_x,h_{x,1},\ldots, h_{x,J}$ to outputs $g'_x,h_{x+1,1},\ldots, h_{x+1,J}$. In terms of the right-hand side of Figure \ref{fig:BRfix}, these provide the transition probabilities from the arrows coming into a column (such as the one in grey) from bottom and left, to those leaving to the top and right.
\end{definition}

As in Definition \ref{deftransprobJ1ZRP}, we use $\bernwtensor{1}{q}{\nu}{\alpha}{J}$ to update sequentially in the following manner. Find the first $x\in \Z$ such that $g_x>0$. In the first sequential update step, let $h_{x,y}\equiv 0$ for $1\leq y\leq J$ and randomly choose $g'_x$ and $h_{x+1,y}$ for $1\leq y\leq J$ according to the stochastic matrix $\bernwtensor{1}{q}{\nu}{\alpha}{J}$. The randomly chosen $h_{x+1,y}$ become input, along with $g_{x+1}$ for the next column update step, and so on sequentially increasing $x$. It is clear that the above described update from $\vec{g}$ to $\vec{g}'$ agrees with $\highspinBoson{\alpha}{q^J\alpha}$.

This update procedure can be recast in the following notation (see the right-hand side of Figure \ref{fig:BRfix}). First, for $x\in \Z$ define $S$ to be the set of basis elements of $[\VJ^1]_1\otimes\cdots \otimes [\VJ^1]_J$ and $S'$ to be the set of basis elements of $[\VJ^J]_1$. In other words, $S$ is isomorphic to $\{0,1\}^{J}$ and $S'$ to $\{0,1,\ldots, J\}$. For all $x\in \Z$, define a Markov transition operator $P_x:S \to S$  whose matrix elements indexed by $(h_{x,1},\ldots, h_{x,J}) \in S$ and $(h_{x+1,1},\ldots, h_{x+1,J}) \in S$ are given by
\begin{equation}\label{eq.Px}
P_x \big( h_{x,1}, \cdots, h_{x,J}; h_{x+1,1},\cdots, h_{x+1,J}  \big) =\big[\bernwtensor{1}{q}{\nu}{\alpha}{J}\big]_{x,1}\big(g_x, h_{x,1}, \cdots, h_{x,J}; g_x', h_{x+1,1},\cdots, h_{x+1,J}\big)
\end{equation}
where we consider $\vec{g}\in\Gspace{k}$ fixed, and have set
\begin{equation}\label{ghx}
g_x' = g_x + h_x - h_{x+1}, \qquad \textrm{with}\qquad h_x = \sum_{y=1}^J h_{x,y}.
\end{equation}
As a corollary of the above definitions we have
\begin{corollary}\label{cor:Px}
Fix $\vec{g}\in \Gspace{k}$ and assume without loss of generality that $x=0$ is the smallest $x\in \Z$ such that $g_x>0$. Consider the Markov chain with state space $S$, transition operator $P_x$, and initial state $h_{0,y}\equiv 0$ for $1\leq y\leq J$. Denote the value at `time' $x$ as $\{h_{x,y}\}_{1\leq y\leq J}$.  Then, from the trajectory of this Markov chain we can deterministically compute $\vec{g}'$ via (\ref{ghx}) and the probability of having an output $\vec{g}'$ given $\vec{g}$ is equal to $\highspinBoson{\alpha}{q^J\alpha}(\vec{g},\vec{g}')$.
\end{corollary}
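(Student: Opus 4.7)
The plan is to realize $\highspinBoson{\alpha}{q^J\alpha}$ as a single ordered product of $J=1$ $R$-matrices indexed by vertices $(x,j)\in\Z\times\{1,\ldots,J\}$, then rearrange this product from a row-by-row order into a column-by-column order, and finally recognize the column-by-column order as the Markov chain driven by $P_x$.

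First I would unroll the definitions. By Definition \ref{def:highJBosonTASEP} we have $\highspinBoson{\alpha}{q^J\alpha} = \highspinBoson{\alpha}{q\alpha}\,\highspinBoson{q\alpha}{q^2\alpha}\cdots\highspinBoson{q^{J-1}\alpha}{q^J\alpha}$, and by Definition \ref{deftransprobJ1ZRP} each factor $\highspinBoson{q^{j-1}\alpha}{q^j\alpha}$ is realized by sequentially applying, left to right in $x$, the operators $[\bernw{1}{q}{\nu}{q^{j-1}\alpha}]_{x,j}$. Thus the full update $\highspinBoson{\alpha}{q^J\alpha}$ is a single ordered product of $R$-matrices indexed by $(x,j)$, with outer index $j$ running from $1$ to $J$ and inner index $x$ sweeping left to right within each row.

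Next I would perform the reordering. The operator $[\bernw{1}{q}{\nu}{q^{j-1}\alpha}]_{x,j}$ acts nontrivially only on $[\VI^I]_x \otimes [\VJ^1]_j$ and as the identity on all other tensor factors, so two such $R$-matrices at $(x,j)$ and $(x',j')$ with $x\neq x'$ \emph{and} $j\neq j'$ act on disjoint spaces and therefore commute. The only constrained pairs are those sharing a column ($x=x'$) or a row ($j=j'$); for such pairs, both the row-by-row order and the target column-by-column order agree (smaller $j$ first within each column, smaller $x$ first within each row). A standard bubble-sort argument, iteratively swapping adjacent operators that act on disjoint tensor factors, therefore transforms the row-by-row product into a column-by-column product in which the block applied at column $x$ is
$$
[\bernw{1}{q}{\nu}{q^{J-1}\alpha}]_{x,J}\cdots [\bernw{1}{q}{\nu}{\alpha}]_{x,1} \;=\; [\bernwtensor{1}{q}{\nu}{\alpha}{J}]_{x,1},
$$
and the blocks themselves are applied from left to right in $x$.

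Finally I would read off the Markov chain. Since $\vec{g}$ is supported on $x\geq 0$, no arrows enter column $0$ from the left, so $(h_{0,1},\ldots,h_{0,J})=(0,\ldots,0)$. At each subsequent column $x$, the inputs $(g_x, h_{x,1},\ldots,h_{x,J})$ are transformed by $[\bernwtensor{1}{q}{\nu}{\alpha}{J}]_{x,1}$ into outputs $(g'_x, h_{x+1,1},\ldots,h_{x+1,J})$, and particle conservation (built into every non-vanishing $R$-matrix entry) forces $g'_x = g_x + h_x - h_{x+1}$ as in \eqref{ghx}. Marginalizing over the (deterministically recoverable) $g'_x$ identifies the joint law of $(h_{x,1},\ldots,h_{x,J})_{x\geq 0}$ with the Markov chain on $S$ whose transitions are given by $P_x$ in \eqref{eq.Px} and whose initial state is zero; reconstructing $\vec{g}'$ from the trajectory then yields a sample from $\highspinBoson{\alpha}{q^J\alpha}(\vec{g},\,\cdot\,)$. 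The main obstacle is purely the bookkeeping in the reordering step: the commutations are transparent from the tensor-factor structure, but one must verify that row-by-row and column-by-column orders really do agree on every pair of operators sharing a row or column, so that no non-commuting transposition is ever required.
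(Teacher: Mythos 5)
Your proof is correct and is precisely the argument the paper has in mind: the paper gives no formal proof of this corollary, asserting only (via Figure \ref{fig:BRfix} and the sentence ``It is clear that the above described update from $\vec{g}$ to $\vec{g}'$ agrees with $\highspinBoson{\alpha}{q^J\alpha}$'') that the row-by-row sampling may be resampled column by column. Your commutation argument --- that the $R$-matrices at $(x,j)$ and $(x',j')$ with $x\neq x'$ and $j\neq j'$ act on disjoint tensor factors $[\VI^I]_x\otimes[\VJ^1]_j$ and $[\VI^I]_{x'}\otimes[\VJ^1]_{j'}$, and that the row-by-row and column-by-column orders agree on every pair sharing a row or a column, so a sequence of adjacent transpositions of commuting factors converts one product into the other --- is exactly the right way to make that assertion rigorous, the only (harmless) loose end being that the infinite product over $x$ is controlled by the almost-sure termination of the sweep already established in Definition \ref{deftransprobJ1ZRP}.
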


Define the function $\phi: S \to S'$ which takes $(h_{x,1},\ldots, h_{x,J}) \in S$ to $h_x = \sum_{y=1}^{J}h_{x,y}\in S'$. Define the operator $\Phi: S \to S'$ which acts on functions as $(\Phi f)\big(h_{x,1}, \cdots, h_{x,J}\big) = f\Big(\phi\big(h_{x,1}, \cdots, h_{x,J}\big)\Big)$. In other words, the matrix elements of $\Phi$, indexed by $(h_{x,1},\ldots, h_{x,J}) \in S$ and $h_x\in S'$, are given by
\begin{equation}\label{eq.phi}
\Phi(h_{x,1}, \cdots, h_{x,J};h_x) = \mathbf{1}_{\phi(h_{x,1},\ldots, h_{x,J}) = h_x}.
\end{equation}

Define an operator $\Lambda : S'\to S$ whose matrix elements indexed by $h_x\in S'$ and $(h_{x,1},\ldots, h_{x,J}) \in S$ are given by
\begin{equation}\label{eq.Lambda}
\Lambda\Big(h_x; \big(h_{x,1}, \ldots, h_{x,J}\big)  \Big) = Z^{-1}\,  \mathbf{1}_{\phi(h_{x,1},\ldots, h_{x,J})= h_x} \, \cdot\, \prod_{y\colon h_{x,y}=1} q^y,
\end{equation}
where $Z$ equals the sum of the weights $\mathbf{1}_{\phi(h_{x,1},\ldots, h_{x,J}) = h_x} \, \cdot\, \prod_{y:h_{x,y}=1} q^y$ over all $(h_{x,1},\ldots, h_{x,J}) \in S$. Observe that  $\Lambda$ does not depend on $x$, and, moreover, it is a Markov transition operator, meaning that for each $h_x\in S'$, $\Lambda(h_x; \cdot)$ is a probability measure in the second slot (over the set $S$), and for each $ \big(h_{x,1}, \ldots, h_{x,J}\big)\in S$, $\Lambda\Big(\cdot;  \big(h_{x,1}, \ldots, h_{x,J}\big)\Big)$ is bounded and measurable in the first slot (over the set $S'$).

\begin{proposition}\label{prop:twoprops}
The following two identities hold:
\begin{enumerate}
\item $\Lambda \Phi = I$, the identity operator on $S'$,
\item For each $x\in \Z$ the Markov operator
\begin{equation}\label{eq.Qx}
Q_x = \Lambda P_x \Phi
\end{equation}
from $S'$ to $S'$ satisfies $\Lambda P_x = Q_x \Lambda$.
\end{enumerate}
\end{proposition}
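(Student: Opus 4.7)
The plan is to verify the two identities directly from the definitions: part (1) is an elementary computation, while part (2) is an instance of the Rogers-Pitman intertwining criterion whose verification is the heart of the fusion procedure.

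For part (1), I expand the matrix product
\begin{equation*}
(\Lambda\Phi)(h_x;h_x') = \sum_{(h_{x,y})_y \in S} \Lambda(h_x;(h_{x,y})_y)\,\Phi((h_{x,y})_y;h_x').
\end{equation*}
The indicator in $\Phi$ forces $\phi((h_{x,y})_y)=h_x'$, which is incompatible with the indicator in $\Lambda$ unless $h_x=h_x'$. When $h_x=h_x'$, the remaining sum equals $Z^{-1}\sum_{(h_{x,y})_y\in\phi^{-1}(h_x)}\prod_{y:h_{x,y}=1}q^y=1$ by the very definition of the normalizing constant $Z$. Hence $\Lambda\Phi=I$.

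For part (2), the identity $\Lambda P_x=Q_x\Lambda$ with $Q_x=\Lambda P_x\Phi$ is equivalent, by the Rogers-Pitman intertwining criterion (a.k.a.\ the Kemeny-Snell-Knapp lemma), to the following fibered factorization: for all $h_x\in S'$ and $(h_{x+1,y})_y\in S$,
\begin{equation*}
(\Lambda P_x)(h_x;(h_{x+1,y})_y) = Q_x\bigl(h_x;\phi((h_{x+1,y})_y)\bigr)\cdot\Lambda\bigl(\phi((h_{x+1,y})_y);(h_{x+1,y})_y\bigr).
\end{equation*}
Equivalently, the ratio $(\Lambda P_x)(h_x;(h_{x+1,y})_y)/\prod_{y:h_{x+1,y}=1}q^y$ must depend on the output only through the total $h_{x+1}=\sum_y h_{x+1,y}$. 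This is the content of a fusion identity: summing the column product $P_x=\prod_{y=1}^J[\bernw{1}{q}{\nu}{q^{y-1}\alpha}]_{x,y}$ against the $q$-weighted distribution $\Lambda(h_x;\cdot)$ should collapse the input/output dependence onto the totals $h_x$ and $h_{x+1}$.

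To establish this ratio invariance I would apply the Yang-Baxter equation (Appendix \ref{sec:YBE}) to interchange adjacent R-matrices in the column. The relative spectral shift between rows $y$ and $y+1$ is exactly a factor of $q$, which is precisely absorbed by the $q^y$ versus $q^{y+1}$ weights in $\Lambda$, so that the $\Lambda$-weighted column sum becomes symmetric under transpositions of horizontal rows and therefore depends only on the row totals. An alternative route is induction on $J$, with base case $J=1$ trivial ($S=S'$, $\Lambda=\Phi=I$, $P_x=Q_x$) and inductive step peeling off one R-matrix via the $J=1$ intertwining. The value of the invariant ratio is then $Q_x$ by definition, and the fact that $Q_x$ is stochastic follows automatically from $Q_x=\Lambda P_x\Phi$. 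The main obstacle is precisely this ratio invariance: while part (1) is bookkeeping, the fusion identity relies on the delicate coordination between the geometric progression of spectral parameters in $P_x$ and the $q^y$ weights in $\Lambda$, and carefully tracking the intermediate vertical states $g_x^{(y)}$ through the YBE reordering is the technically demanding step.
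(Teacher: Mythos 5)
Your overall architecture matches the paper's proof: part (1) is the same bookkeeping (the indicator in $\Lambda$ restricts the sum to $\phi^{-1}(h_x)$ and the normalization $Z$ does the rest), and for part (2) the paper likewise reduces the intertwining to the statement that $\Lambda P_x\big(h_x;(h_{x+1,y})_y\big)\big/\prod_{y\colon h_{x+1,y}=1}q^{y}$ depends on the output only through the total $h_{x+1}$ --- this is exactly the combination of (\ref{eq.claiminverse}) and (\ref{eq.equaltolambda}), i.e.\ your fibered factorization, and the paper also closes the argument the same way you do, using that adjacent transpositions generate $S(J)$.

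The gap is in the mechanism you propose for the adjacent-row exchange, which is where all the computational content of the proposition lives. The Yang--Baxter equation of Appendix \ref{sec:YBE} intertwines the two $R$-matrices via the matrix $Y$, but at the fusion point $\alpha_2=q\alpha_1$ (equivalently $u_2=qu_1$) the middle $2\times2$ block of $Y$ has vanishing determinant, so $Y$ is not invertible and one cannot simply conjugate to ``interchange adjacent $R$-matrices''; this degeneracy is the whole point of fusion. What the paper proves instead is Lemma \ref{lem.threeids}: three explicit identities, checked by direct calculation, which are precisely the surviving content of the degenerate YBE and which state that the $q^{y}$-weighted sum over the two input orderings is multiplied by $q^{h_{x+1,j+1}-h_{x+1,j}}$ when the two outputs are swapped. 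Note in particular that the third identity (\ref{eq.lem3}) mixes two distinct intermediate vertical states, so the exchange relation is not a term-by-term statement about individual products $\bernw{1}{q}{\nu}{\alpha}\cdot\bernw{1}{q}{\nu}{q\alpha}$ but requires the symmetrization over inputs (the ``double counting'' step in the paper). Your induction-on-$J$ alternative does not avoid this: its inductive step is again exactly the two-row identity. To complete your argument you would need either to verify the three identities of Lemma \ref{lem.threeids} directly (a short computation), or to make precise how the weights $q^{y}$ in $\Lambda$ project onto the invariant subspace of the singular $Y$ at the fusion point.
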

\begin{proof}
The first identity amounts to the claim that for any $h_x$, the probability measure $\Lambda(h_x;\cdot)$ is supported entirely upon $\phi^{-1}(h_x)$, the pre-image of $h_x$ under $\phi$. This, however, is immediate from the definition of $\Lambda$.

The second identity relies on more involved properties of the $\Lmat$-matrix weights. These are recorded in the following lemma (see also Figure \ref{fig:YBEresults} for a pictorial representation of the three identities of the lemma).
\begin{lemma}\label{lem.threeids}
For all $\alpha$ and all $m\in \Z_{\geq 0}$
\begin{align}
&q \bernw{1}{q}{\nu}{\alpha}(m,0;m-1,1) \bernw{1}{q}{\nu}{q \alpha}(m-1,0;m-1,0) = \bernw{1}{q}{\nu}{\alpha}(m,0;m,0) \bernw{1}{q}{\nu}{q \alpha}(m,0;m-1,1), \label{eq.lem1}\\
&q \bernw{1}{q}{\nu}{\alpha}(m,1;m,1) \bernw{1}{q}{\nu}{q \alpha}(m,1;m+1,0) = \bernw{1}{q}{\nu}{\alpha}(m,1;m+1,0) \bernw{1}{q}{\nu}{q \alpha}(m+1,1;m+1,1), \label{eq.lem2}\\
\nonumber&q \Big( \bernw{1}{q}{\nu}{\alpha}(m,1;m,1) \bernw{1}{q}{\nu}{q \alpha}(m,0;m,0) + q  \bernw{1}{q}{\nu}{\alpha}(m,0;m-1,1) \bernw{1}{q}{\nu}{q \alpha}(m-1,1;m,0)\Big)  \\
&=
 \bernw{1}{q}{\nu}{\alpha}(m,1;m+1,0) \bernw{1}{q}{\nu}{q \alpha}(m+1,0;m,1) + q  \bernw{1}{q}{\nu}{\alpha}(m,0;m,0) \bernw{1}{q}{\nu}{q \alpha}(m,1;m,1). \label{eq.lem3}
\end{align}
\end{lemma}
\begin{proof}
Each identity is readily checked by direct calculation.
\end{proof}

\begin{figure}
\begin{center}
\includegraphics[scale=.75]{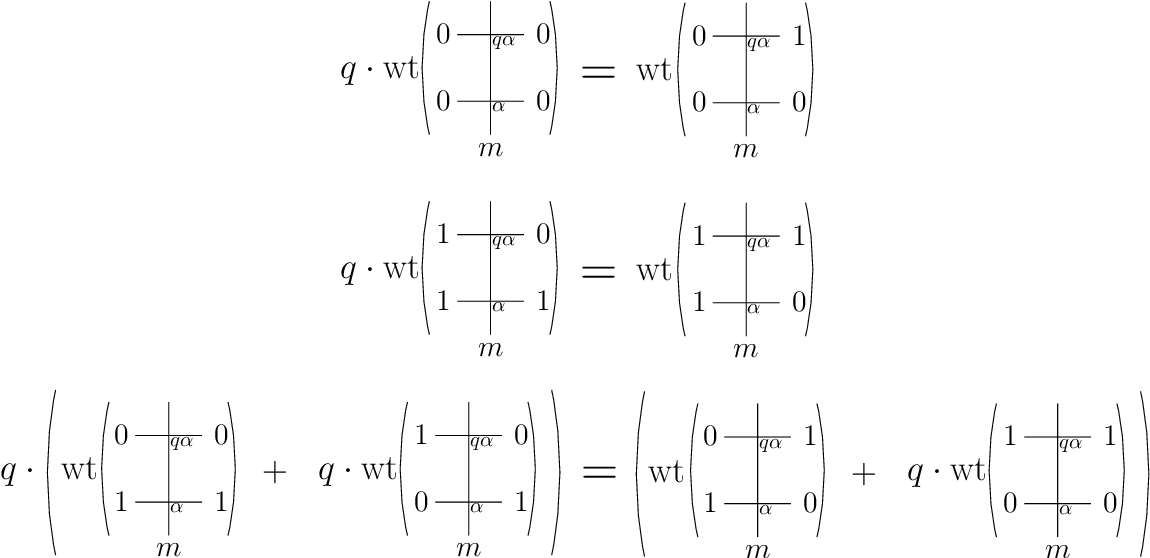}
\end{center}
\caption{Pictorial representation of the three identities in Lemma \ref{lem.threeids}. The weight of a diagram is given by the product of $\Lmat$-matrices for each vertex. The value of the $\alpha$ parameter is specified as either $\alpha$ or $q\alpha$ depending on the height of the vertex.}
\label{fig:YBEresults}
\end{figure}

We will use the lemma to show that for any $j\in \{1,\ldots, J-1\}$,
\begin{align}\label{eq.claiminverse}
&\Lambda P_x \big(h_x ; h_{x+1,1},\ldots,h_{x+1,j},h_{x+1,j+1},\ldots h_{x+1,J}\big) \\
\nonumber &= q^{h_{x+1,j+1}-h_{x+1,j}} \Lambda P_x \big(h_x ;h_{x+1,1},\ldots,h_{x+1,j+1},h_{x+1,j},\ldots h_{x+1,J}\big).
\end{align}
Notice that the terms $h_{x+1,j}$ and $h_{x+1,j+1}$ have been switched between the left-hand and right-hand sides.

Lemma \ref{lem.threeids} readily implies (\ref{eq.claiminverse}). To see this, first write
$$
\mathrm{LHS}(\ref{eq.claiminverse}) = \sum_{h_{x,1},\ldots, h_{x,J}\in \phi^{-1}(h_x)} \Lambda\big(h_x;h_{x,1},\ldots, h_{x,J}\big) P_x\big(h_{x,1},\ldots, h_{x,J};h_{x+1,1},\ldots, h_{x+1,J}\big).
$$
Let $(j,j+1)$ denote the permutation which transposes $j$ and $j+1$, and $\mathrm{Id}$ denote the identity permutation. Then it follows by double counting that we can continue the line of equalities as
$$
= \tfrac{1}{2}\!\! \sum_{h_{x,1},\ldots, h_{x,J}\in \phi^{-1}(h_x)} \sum_{\sigma\in \big\{\mathrm{Id},(j,j+1)\big\}} \Lambda\big(h_x;h_{x,\sigma(1)},\ldots, h_{x,\sigma(J)}\big) P_x\big(h_{x,\sigma(1)},\ldots, h_{x,\sigma(J)};h_{x+1,1},\ldots, h_{x+1,J}\big).
$$
We may now expand the definition of $P_x$ into the product of $\Lmat$-matrices and factor out all terms unaffected by the permutation $\sigma$ (i.e., terms not involving level $j$ or $j+1$). To facilitate this expansion, let us temporarily introduce the notation that for any permutation $\sigma$, $g^{\sigma}_{x,1}=g_x$ (which is given) and $g^{\sigma}_{x,y+1} = g^{\sigma}_{x,y} + h_{x,\sigma(y)} - h_{x+1,y}$. Note that for $\sigma = (j,j+1$), the only value of $g^{\sigma}(x,y)$ which may differ from the case $\sigma=\mathrm{Id}$ is for $y=j+1$. Then, we can continue the above line of equalities as
\begin{align*}
&= \tfrac{1}{2}\!\!\!\!\!\! \sum_{h_{x,1},\ldots, h_{x,J}\in \phi^{-1}(h_x)}  \Lambda\big(h_x;h_{x,1},\ldots, h_{x,J}\big) \prod_{\substack{y=1\\y\neq j,j+1}}^{J} \bernw{1}{q}{\nu}{\alpha}(g_{x,y},h_{x,y};g_{x,y+1},h_{x+1,y}) \\
&\quad \times\!\!\!\! \sum_{\sigma\in \big\{\mathrm{Id},(j,j+1)\big\}} q^{h_{x,\sigma(j+1)}-h_{x,j+1}} \bernw{1}{q}{\nu}{\alpha}(g_{x,j},h_{x,\sigma(j)};g^{\sigma}_{x,j+1},h_{x+1,j})
\bernw{1}{q}{\nu}{\alpha}(g^{\sigma}_{x,j+1},h_{x,\sigma(j+1)};g_{x,j+2},h_{x+1,j+1}).
\end{align*}
We have used here the fact that
$\Lambda\big(h_x;h_{x,\sigma(1)},\ldots, h_{x,\sigma(J)}\big) =  q^{h_{x,\sigma(j+1)}-h_{x,j+1}}   \Lambda\big(h_x;h_{x,1},\ldots, h_{x,J}\big)$.

Finally, observe that by applying Lemma \ref{lem.threeids} we have
\begin{align*}
& \sum_{\sigma\in \big\{\mathrm{Id},(j,j+1)\big\}} q^{h_{x,\sigma(j+1)}-h_{x,j+1}} \bernw{1}{q}{\nu}{\alpha}(g_{x,j},h_{x,\sigma(j)};g^{\sigma}_{x,j+1},h_{x+1,j})
\bernw{1}{q}{\nu}{\alpha}(g^{\sigma}_{x,j+1},h_{x,\sigma(j+1)};g_{x,j+2},h_{x+1,j+1})\\
&=
q^{h_{x+1,j+1}-h_{x+1,j}} \!\!\!\! \!\!\!\!\sum_{\sigma\in \big\{\mathrm{Id},(j,j+1)\big\}} \!\!\!\!\!\!\!\! q^{h_{x,\sigma(j+1)}-h_{x,j+1}} \bernw{1}{q}{\nu}{\alpha}(g_{x,j},h_{x,\sigma(j)};g^{\sigma}_{x,j+1},h_{x+1,j+1})
\bernw{1}{q}{\nu}{\alpha}(g^{\sigma}_{x,j+1},h_{x,\sigma(j+1)};g_{x,j+2},h_{x+1,j}).
\end{align*}
Notice that besides the factor of  $q^{h_{x+1,j+1}-h_{x+1,j}}$, the change on the right-hand side is that $h_{x+1,j}$ and $h_{x+1,j+1}$ have been switched. Plugging this equality into the above line of equalities, and gathering terms back into their original form we arrive at
$$
q^{h_{x+1,j+1}-h_{x+1,j}} \Lambda P_x \big(h_x; h_{x+1,1},\ldots,h_{x+1,j+1},h_{x+1,j},\ldots h_{x+1,J}\big),
$$
as desired to prove (\ref{eq.claiminverse}).

It remains to use (\ref{eq.claiminverse}) to prove the second identity of Proposition \ref{prop:twoprops}. Observe that
\begin{align}\label{eq.probexpand}
& \Lambda P_x \big(h_x ; h_{x+1,1},\ldots, h_{x+1,J}\big) \\
&= \sum_{h_{x+1}\in S'} \Lambda P_x \big(h_{x}; \phi^{-1}(h_{x+1})\big) \mathbf{1}_{h_{x+1} = \phi(h_{x+1,1},\ldots, h_{x+1,J})} \frac{\Lambda P_x \big(h_x ; h_{x+1,1},\ldots, h_{x+1,J}\big)}{\Lambda P_x \big(h_x; \phi^{-1}(h_{x+1})\big)}.
\end{align}

We claim that
\begin{equation}\label{eq.equaltolambda}
\mathbf{1}_{h_{x+1} = \phi(h_{x+1,1},\ldots, h_{x+1,J})} \frac{\Lambda P_x \big(h_x; h_{x+1,1},\ldots, h_{x+1,J}\big)}{\Lambda P_x \big(h_x; \phi^{-1}(h_{x+1})\big)} = \Lambda\big(h_{x+1};h_{x+1,1},\ldots, h_{x+1,J}\big).
\end{equation}
This follows from two facts. First, given $h_{x+1}$, the left-hand side of (\ref{eq.equaltolambda}) is a probability measure on the set $\phi^{-1}(h_{x+1})\subseteq S$. To state the second fact, let us introduce short-hand that $\mathrm{LHS}(\ref{eq.equaltolambda})^{(j,j+1)}$ is equal to the left-hand side of (\ref{eq.equaltolambda}) when $h_{x+1,j}$ and $h_{x+1,j+1}$ are switched. Likewise define $\mathrm{RHS}(\ref{eq.equaltolambda})^{(j,j+1)}$. It follows from (\ref{eq.claiminverse}) that for any $1\leq j\leq J-1$,
$$
\frac{\mathrm{LHS}(\ref{eq.equaltolambda})}{\mathrm{LHS}(\ref{eq.equaltolambda})^{(j,j+1)}} = \frac{\mathrm{RHS}(\ref{eq.equaltolambda})}{\mathrm{RHS}(\ref{eq.equaltolambda})^{(j,j+1)}}.
$$
In fact, both sides are either equal to $q^{-1}, 1$ or $q$. Thus, combining these two facts (along with the fact that transpositions $(j,j+1)$, $1\leq j\leq J-1$ generate the symmetric group $S(J)$) proves (\ref{eq.equaltolambda}).

To complete the proof of the second identity of Proposition \ref{prop:twoprops} combine (\ref{eq.probexpand}) and (\ref{eq.equaltolambda}) to find that
$$
\Lambda P_x \Big(h_x; \big(h_{x+1,1},\ldots, h_{x+1,J}\big)\Big) = \sum_{h_{x+1}\in S'} \Lambda P_x \big(h_{x}; \phi^{-1}(h_{x+1})\big) \Lambda\Big(h_{x+1};\big(h_{x+1,1},\ldots, h_{x+1,J}\big)\Big).
$$
But $\Lambda P_x \big(h_{x}; \phi^{-1}(h_{x+1})\big) = \Lambda P_x \Phi (h_{x};h_{x+1}) = Q_x (h_x;h_{x+1})$ implying $\Lambda P_x = Q_x \Lambda$.
\end{proof}



\begin{figure}
\begin{center}
\includegraphics[scale=1]{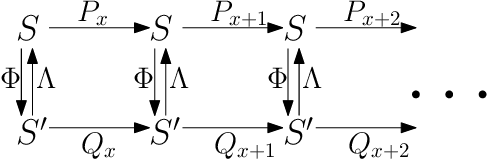}
\end{center}
\caption{The intertwining relation for Markov function theory.}
\label{fig:comdia}
\end{figure}
We make use of the following sufficient condition for when a function of a Markov chain is itself Markov. This comes from \cite{rogerspitman1981}, though since we deal with an inhomogeneous Markov chain, we provide the (essentially unchanged) proof.
\begin{proposition}\label{prop:MFT}
Consider measurable spaces $S$ and $S'$ and a measurable transform $\phi:S\to S'$. Let $\Phi$ be a Markov transition operator from $S\to S'$ defined according to $\Phi f =f \circ \phi$. Consider a collection of $(x\in \Z_{\geq 0})$-indexed Markov transition operators $P_x:S\to S$. Suppose that there exists a Markov transition operator $\Lambda:S'\to S$ such that (see Figure \ref{fig:comdia})
\begin{itemize}
\item $\Lambda \Phi = I$, the identity operator on $S'$,
\item For each $x\in \Z_{\geq 0}$ the Markov operator
$Q_x = \Lambda P_x \Phi$
from $S'$ to $S'$ satisfies $\Lambda P_x = Q_x \Lambda$.
\end{itemize}
Let $X(x)$ be Markov with $x$-indexed Markov transition operators $P_x$ (so that $P_x$ takes one from the $x$ to $x+1$ state) and initial distribution $\Lambda(y,\cdot)$ where $y\in S'$. Then $Y(x)=\phi\big(X(x)\big)$ is Markov with starting state $Y(0)=y$ and $x$-indexed Markov transition operators $Q_x$.
\end{proposition}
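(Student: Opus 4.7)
The plan is to proceed by induction on $x$, propagating the joint invariant that (i) the process $\big(Y(0), Y(1), \ldots, Y(x)\big)$ is Markov with initial state $y$ and transition operators $Q_0, Q_1, \ldots, Q_{x-1}$, and (ii) the conditional distribution of $X(x)$ given $\big(Y(0), \ldots, Y(x)\big)$ is $\Lambda(Y(x), \cdot)$. The base case $x=0$ follows immediately from the hypothesis $\Lambda\Phi = I$: writing this out with indicator test functions shows that $\Lambda(y,\cdot)$ is supported on $\phi^{-1}(y)$, so $Y(0) = \phi\big(X(0)\big) = y$ almost surely, while trivially $X(0) \sim \Lambda(Y(0), \cdot)$.

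For the inductive step, assume (i) and (ii) hold at time $x$. For bounded measurable test functions $g$ on $S$ and $h$ on $(S')^{x+2}$, the plan is to compute the joint expectation $\EE\big[g(X(x+1))\, h(Y(0), \ldots, Y(x+1))\big]$ in two layers. First condition on the full $X$-history up to time $x$: since $P_x$ is the transition operator of $X$ and $Y(x+1) = \phi(X(x+1))$, this conditional expectation equals $\big(P_x \bar h\big)\big(X(x)\big)$, where $\bar h(x') := g(x') h(Y(0), \ldots, Y(x), \phi(x'))$. Then condition on the $Y$-history up to time $x$ and apply the inductive hypothesis (ii) to replace $X(x)$ by $\Lambda(Y(x), \cdot)$, yielding $\big(\Lambda P_x \bar h\big)(Y(x))$. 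Using the intertwining relation $\Lambda P_x = Q_x \Lambda$ rewrites this as $\big(Q_x \Lambda \bar h\big)(Y(x))$, and crucially, because $\Lambda(y', \cdot)$ is supported on $\phi^{-1}(y')$, the inner integral simplifies to $\big(\Lambda \bar h\big)(y') = h(Y(0), \ldots, Y(x), y') \cdot (\Lambda g)(y')$.

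Putting this together, one obtains
\begin{equation*}
\EE\big[g(X(x+1))\, h(Y(0), \ldots, Y(x+1))\big] = \EE\Big[\sum_{y'} Q_x(Y(x), y')\, h(Y(0), \ldots, Y(x), y')\, (\Lambda g)(y')\Big].
\end{equation*}
Specializing $g \equiv 1$ gives $\Lambda g \equiv 1$, which shows that $Y(x+1)$ given the $Y$-history is distributed as $Q_x(Y(x), \cdot)$, extending (i) to time $x+1$. Plugging this back in shows that $\EE\big[g(X(x+1))\, h(Y(0), \ldots, Y(x+1))\big] = \EE\big[(\Lambda g)(Y(x+1))\, h(Y(0), \ldots, Y(x+1))\big]$, which is exactly the statement that $X(x+1)$ conditional on the $Y$-history through time $x+1$ has law $\Lambda(Y(x+1), \cdot)$, extending (ii). The main bookkeeping point, and the only place where anything beyond routine conditioning is used, is the collapse $\Lambda \bar h(y') = h(\ldots, y')\, \Lambda g(y')$, which relies precisely on the first hypothesis $\Lambda \Phi = I$; this is what allows the induction to close and no step presents a genuine obstacle.
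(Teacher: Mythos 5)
Your proof is correct and is essentially the paper's argument: the paper establishes the single operator identity $\Lambda (\Phi f'_0) P_0 (\Phi f'_1) \cdots P_{x-1} (\Phi f'_x) f = f'_0 Q_0 f'_1 \cdots Q_{x-1} f'_x \Lambda f$ by repeatedly applying the same two facts you use, namely the collapse $\Lambda (\Phi f') f = f' \Lambda f$ (equivalent to $\Lambda(y,\cdot)$ being supported on $\phi^{-1}(y)$) and the intertwining $\Lambda P_x = Q_x \Lambda$. Your induction on the conditional-law invariant is just this telescoping identity unrolled step by step, so no substantive difference.
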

\begin{proof}
For Borel functions $f:S\to \R$ and $f':S'\to \R$ the first condition implies that\footnote{Concatenation of operators and functions should be read from right to left, unless indicated by parentheses.} $\Lambda (\Phi f') f = f' \Lambda f$. Using the second condition we find that
$$
\Lambda P_x (\Phi f') f = Q_x \Lambda (\Phi f') g = Q_x f' \Lambda f.
$$
In the same manner, we find that for any $x\in \Z_{\geq 0}$ and Borel functions $f:S\to \R$ and $f'_0,\ldots, f'_x:S'\to \R$,
$$
\Lambda (\Phi f'_0) P_0 (\Phi f'_1) \cdots P_{x-1} (\Phi f'_x) f = f'_0 Q_0 f'_1 Q_1 f'_2 \cdots  Q_{x-1} f'_x \Lambda f.
$$
This immediately implies the conclusion of the theorem.
\end{proof}

We return now to the specific definitions of $S,S',P_x,\phi,\Phi,\Lambda,$ and $Q_x$ given earlier in this section.
\begin{corollary}\label{cor:PxQx}
Initialize the Markov chain with state space $S$ and transition operator $P_x$ to initial state $h_{0,y}\equiv 0$ for $1\leq y\leq J$ and denote the value at `time' $x$ as $\{h_{x,y}\}_{1\leq y\leq J}$. Then $h_x = \phi\big(h_{x,1},\ldots, h_{x,J}\big)$ is Markov with starting state $h_0=0$ and Markov transition operator $Q_x$.
\end{corollary}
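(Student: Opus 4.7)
The plan is to apply Proposition \ref{prop:MFT} directly, with initial state $y = 0 \in S'$. The verification of the hypotheses of that proposition has already been carried out in Proposition \ref{prop:twoprops}, so the only thing left to check is that the deterministic initialization $h_{0,y} \equiv 0$ matches the random initialization $\Lambda(0,\cdot)$ required by Proposition \ref{prop:MFT}.

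First I would observe that $\phi^{-1}(0) \subseteq S \simeq \{0,1\}^J$ is a singleton. Indeed, the only way to have $\sum_{y=1}^{J} h_{0,y} = 0$ with each $h_{0,y} \in \{0,1\}$ is for all of the $h_{0,y}$ to equal $0$. By definition \eqref{eq.Lambda}, $\Lambda(0; \cdot)$ is supported on $\phi^{-1}(0)$, and since this pre-image has a single element, $\Lambda(0; \cdot)$ is the point mass at $(0,\ldots,0) \in S$. Thus the law of $X(0)$ under the hypothesis of Proposition \ref{prop:MFT} with $y=0$ is precisely the deterministic initial condition $h_{0,y} \equiv 0$ assumed in the present corollary.

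Second I would invoke Proposition \ref{prop:MFT}. Its hypotheses were verified in Proposition \ref{prop:twoprops}: identity (1) there gives $\Lambda \Phi = I$ on $S'$, and identity (2) gives the intertwining $\Lambda P_x = Q_x \Lambda$, where $Q_x = \Lambda P_x \Phi$ is defined exactly as required. Applying Proposition \ref{prop:MFT} to the chain $X(x) = (h_{x,1},\ldots,h_{x,J})$ with transition operators $P_x$ and starting distribution $\Lambda(0,\cdot)$, we conclude that $Y(x) = \phi(X(x)) = h_x$ is Markov on $S'$ with starting state $Y(0) = 0$ and $x$-indexed transition operators $Q_x$, which is exactly the statement of the corollary.

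There is no real obstacle: the substantive work has been done already, in Lemma \ref{lem.threeids} and in the intertwining identity proved in Proposition \ref{prop:twoprops}, and in the general Markov functions theorem of Proposition \ref{prop:MFT}. The only point requiring a sentence of care is noticing that the deterministic zero state is exactly the measure $\Lambda(0,\cdot)$, which follows because $\phi^{-1}(0)$ happens to be a singleton; this is a happy accident of the endpoint $h_x = 0$ and would not be true for a generic $h_x \in S'$.
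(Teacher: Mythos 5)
Your proposal is correct and is exactly the argument the paper intends (the corollary is stated without an explicit proof, as an immediate consequence of Propositions \ref{prop:twoprops} and \ref{prop:MFT}). Your extra sentence noting that $\phi^{-1}(0)$ is a singleton, so the deterministic all-zeros initialization coincides with $\Lambda(0,\cdot)$, is the one point the paper leaves implicit, and you handle it correctly.
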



\begin{definition}[General $J$ $\Lmat$-matrix]
The $J$ higher spin $\Lmat$-matrix $\bernw{J}{q}{\nu}{\alpha}: \VI^I \otimes \VJ^J \to \VI^I\otimes \VJ^J$ is defined such that (here $x\in \Z$ is an arbitrary and inconsequential index)
\begin{equation}\label{eq:onetoJ}
[\bernw{J}{q}{\nu}{\alpha}]_{x,1}(g_x,h_x;g_x',h_{x+1})= \mathbf{1}_{g_x+h_x=g_x'+h_{x+1}} Q_x(h_x;h_{x+1}),
\end{equation}
where $Q_x$ is defined with respect to $g_x$ (via $P_x$) in (\ref{eq.Qx}).
\end{definition}

\begin{figure}
\begin{center}
\includegraphics[scale=.9]{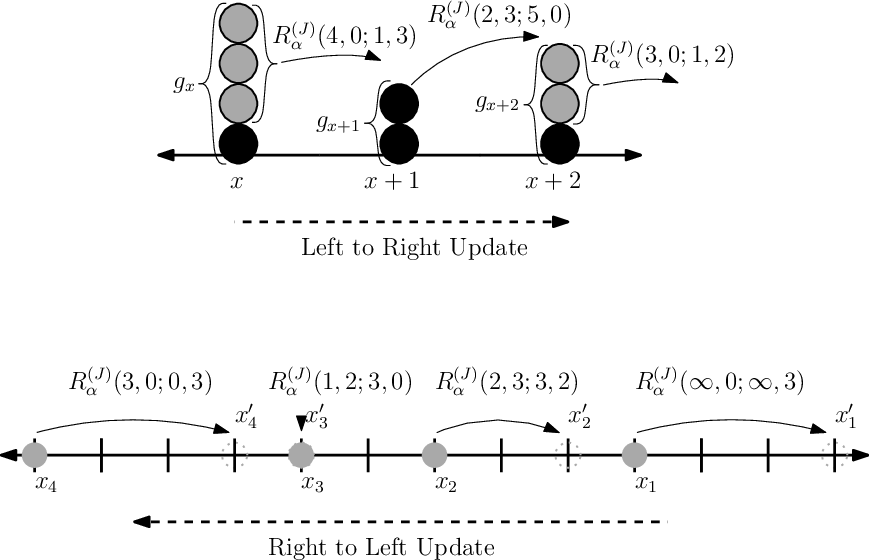}
\end{center}
\caption{Illustration of the $J$ higher spin zero range process (on top) and exclusion process (on bottom). Here $J$ is at least three (as there are clusters / jumps of size at least three. The $\Lmat$-matrix $\bernw{J}{q}{\nu}{\alpha}$ determines the distribution of the sequential updates.}
\label{fig:Jspinprocesses}
\end{figure}

We may now define the higher spin versions of the zero range and exclusion processes introduced in Definitions \ref{deftransprobJ1ZRP} and \ref{deftransprobJ1AEP} respectively. Observe that these depend now on four parameters, $q,\nu,\alpha,J$. As before, we assume that $q,\nu,\alpha$ satisfy the conditions of (\ref{eq:star7}).

\begin{definition}[General $J$ higher spin zero range process and exclusion process]\label{def:highspins}
For $J\in \Z_{\geq 1}$ define the $J$ higher spin zero range process according to the sequential construction of Definition \ref{deftransprobJ1ZRP} with the $\Lmat$-matrix $\bernw{J}{q}{\nu}{\alpha}$ defined in (\ref{eq:onetoJ}). Now, anywhere between $0$ and $J$ particles can move right in each update (i.e., the $h_x$ are basis elements of $\VJ^J$). We still write the corresponding Markov chain at $\vec{g}(t)$. Likewise, define the space-reversed zero range process $\vec{y}(t)$ as in the $J=1$ case. Similarly, define the $J$ higher spin exclusion process as in Definition \ref{deftransprobJ1AEP} with the $\Lmat$-matrix $\bernw{J}{q}{\nu}{\alpha}$. Now, particles can move right by up to and including $J$ spaces. See Figure \ref{fig:Jspinprocesses} for illustrations of these processes.
\end{definition}

\begin{corollary}Recall Definition \ref{def:highspins}.
The transition operator for the $J$ higher spin zero range process is $\highspinBoson{\alpha}{q^J\alpha}$, it space-reversal is $\revhighspinBoson{\alpha}{q^J\alpha}$ , and the exclusion process is $\highspinTASEP{\alpha}{q^J\alpha}$.
\end{corollary}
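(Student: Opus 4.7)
The plan is to unpack both sides and show they coincide step-by-step, leveraging the Markov-functions machinery already set up in Proposition~\ref{prop:MFT} and Corollary~\ref{cor:PxQx}. Let me first focus on the zero range case with finitely many particles $\vec{g}\in\Gspace{k}$. By Definition~\ref{def:highJBosonTASEP}, $\highspinBoson{\alpha}{q^J\alpha}$ is the composition of $J$ single-step $J=1$ zero range operators with parameters $\alpha,q\alpha,\ldots,q^{J-1}\alpha$. By Corollary~\ref{cor:Px}, this composition can be sampled column-by-column from left to right: in column $x$, the transition $P_x$ on $S=\{0,1\}^J$ takes the incoming arrow configuration $(h_{x,1},\ldots,h_{x,J})$ to the outgoing one $(h_{x+1,1},\ldots,h_{x+1,J})$, and the updated site value is determined deterministically by $g_x'=g_x+h_x-h_{x+1}$ where $h_x=\sum_y h_{x,y}=\phi(h_{x,1},\ldots,h_{x,J})$.

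The key observation is that $g_x'$ depends on the arrow variables only through the coarse-grained sums $h_x,h_{x+1}\in S'$. Consequently, by Corollary~\ref{cor:PxQx}, the collapsed chain $h_x$ itself evolves as a Markov chain with transition operator $Q_x$, and the joint law of $(\vec{g}',\,h_\cdot)$ can be sampled without ever computing the fine arrow configuration: one simply samples $h_{x+1}$ from $Q_x(h_x;\cdot)$ and then sets $g_x'=g_x+h_x-h_{x+1}$. But this is precisely the sequential prescription that Definition~\ref{def:highspins} applies to $\bernw{J}{q}{\nu}{\alpha}$, because by the definition (\ref{eq:onetoJ}) of the higher spin $R$-matrix we have
\[
[\bernw{J}{q}{\nu}{\alpha}](g_x,h_x;g_x',h_{x+1})=\mathbf{1}_{g_x+h_x=g_x'+h_{x+1}}\,Q_x(h_x;h_{x+1}),
\]
so sampling $(g_x',h_{x+1})$ from $[\bernw{J}{q}{\nu}{\alpha}](g_x,h_x;\cdot,\cdot)$ is identical to sampling $h_{x+1}$ from $Q_x(h_x;\cdot)$ and imposing the conservation law. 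Matching the two left-to-right sweeps yields that the transition probability from $\vec{g}$ to $\vec{g}'$ is equal under both definitions, which is exactly the claim for finite configurations.

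To extend to infinite left-finite configurations $\vec{g}\in\Ginf$, both the definition of $\highspinBoson{\alpha}{q^J\alpha}$ (via composing $J$ applications of the $J=1$ operator, each defined as an inverse limit in Definition~\ref{deftransprobJ1ZRP}) and the definition of the $J$ higher spin process (via the sequential update with $\bernw{J}{q}{\nu}{\alpha}$ applied to the restrictions $\vec{g}\big\vert_M$) are consistent inverse limits, and the finite-configuration identification passes through these limits since restrictions of stochastic left-to-right updates are consistent. The space-reversed statement, that the space-reversed $J$ higher spin zero range process has operator $\revhighspinBoson{\alpha}{q^J\alpha}$, follows immediately by conjugating the whole argument by the reflection $P$ used in Definition~\ref{deftransprobJ1ZRP}. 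For the exclusion process, Definition~\ref{deftransprobJ1AEP} builds the $J$ higher spin exclusion process by the same gap/particle transform as in the $J=1$ case, so the identification $\highspinTASEP{\alpha}{q^J\alpha}=\highspinTASEP{\alpha}{q\alpha}\cdots\highspinTASEP{q^{J-1}\alpha}{q^J\alpha}$ follows from the zero range identification applied to the gap process, together with the fact that a single step of the $j$-th factor acts on gaps exactly as the corresponding $J=1$ zero range operator.

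The only step that requires care is the one in the second paragraph: verifying that the sequential update in Definition~\ref{deftransprobJ1ZRP} applied with the kernel $\bernw{J}{q}{\nu}{\alpha}$ genuinely reproduces, in joint law over the whole trajectory $(\vec{g}',h_\cdot)$, the column-by-column sampling governed by $P_x$ that Corollary~\ref{cor:Px} extracts from $\highspinBoson{\alpha}{q^J\alpha}$. In other words, one must check that conditioning on $h_x$ and sampling $h_{x+1}$ via $Q_x$ — and only then propagating forward — yields the same conditional distribution for the full column-wise dynamics as the lifted $P_x$-dynamics on $S$ would give after applying $\phi$ column by column. This is exactly the content of the Markov-functions intertwining $\Lambda P_x=Q_x\Lambda$ proved in Proposition~\ref{prop:twoprops}(2), iterated along the scan and started from the deterministic initial state $h_{0,y}\equiv 0$ (which satisfies $\Lambda(0;\cdot)=\delta_{(0,\ldots,0)}$, so the initialization is trivially compatible). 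Once this is in place the corollary is a direct transcription.
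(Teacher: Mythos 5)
Your proof is correct and follows essentially the same route as the paper, which simply cites Corollaries \ref{cor:Px} and \ref{cor:PxQx}; you have just spelled out the details of how the intertwining $\Lambda P_x = Q_x\Lambda$, the deterministic initialization, and the definition \eqref{eq:onetoJ} combine, plus the routine extensions to infinite configurations and the gap/particle transform.
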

\begin{proof}
These results follow from Corollaries \ref{cor:Px} and \ref{cor:PxQx}.
\end{proof}

\subsection{Recursion relation for $\Lmat$-matrix weights}\label{sec:RR}
In (\ref{eq:onetoJ}) we have provided a method to calculate $\bernw{J}{q}{\nu}{\alpha}$ in terms of $\bernw{1}{q}{\nu}{\alpha}$. We can also write a compact recursive formula (in $J$) that $\bernw{J}{q}{\nu}{\alpha}$ must satisfy.

In defining $\bernw{J}{q}{\nu}{\alpha}$ we applied $\Lambda$ to $h_x\in [\VJ^J]_x$, so as to yield a probability measure on the pre-image $\phi^{-1}(h_x)$. Let $p^{(J)}(h_{x,1}|h_x)$ denote the projection of this probability measure onto $h_{x,1}$, the first outgoing arrow. In other words, $p^{(J)}(0|h_x)$ and $p^{(J)}(1|h_x)$ (respectively) represent the probability that $h_{x,1}=0$ or $1$ given the value of $h_x$. One calculates that for $j\in \{0,1\}$,
$$
p^{(J)}(j|h_x)  = \sum_{(j=h_{x,1},h_{x,2},\ldots, h_{x,J})\in \phi^{-1}(h_x)} \Lambda\big(h_x; (j=h_{x,1},h_{x,2},\ldots, h_{x,J})\big) =
\begin{cases}
\frac{q^{h_x}-q^J}{1-q^J}, &j=0;\\
\frac{1-q^{h_x}}{1-q^J}, &j=1.
\end{cases}
$$

\begin{lemma}\label{lem:rec}
The $\Lmat$-matrix $\bernw{J}{q}{\nu}{\alpha}$ satisfies the following recursion relation (let $i_2=i_1+j_1-j_2$)
\begin{equation}\label{eq:rec}
\bernw{J}{q}{\nu}{\alpha}(i_1,j_1,i_2,j_2) = \sum_{a,b\in \{0,1\}} p^{(J)}(a|j_1) \bernw{1}{q}{\nu}{\alpha}(i_1,a;i_1+a-b,b) \bernw{J-1}{q}{\nu}{q\alpha}(i_1+a-b,j_1-a;i_2,j_2-b),
\end{equation}
which, along with its value at $J=1$, uniquely characterizes its value for $J\in \Z_{\geq 1}$.
\end{lemma}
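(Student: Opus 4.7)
The plan is to extract the recursion directly from the fusion construction of $\bernw{J}{q}{\nu}{\alpha}$ given in (\ref{eq:onetoJ}) by peeling off the first of the $J$ horizontal levels. On the particle-conserving sector,
\[
\bernw{J}{q}{\nu}{\alpha}(i_1,j_1;i_2,j_2) = \sum_{\vec{h},\vec{h}'} \Lambda(j_1;\vec{h})\, \bigl[\bernwtensor{1}{q}{\nu}{\alpha}{J}\bigr]_{x,1}(i_1,\vec{h};i_2,\vec{h}')\, \mathbf{1}_{\phi(\vec{h}')=j_2},
\]
with $\vec{h}=(h_{x,1},\dots,h_{x,J})$ and $\vec{h}'=(h_{x+1,1},\dots,h_{x+1,J})$. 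I will parameterize the sum by $a := h_{x,1}$ and $b := h_{x+1,1}$, and split the tensor product at the first horizontal level as
\[
\bigl[\bernwtensor{1}{q}{\nu}{\alpha}{J}\bigr]_{x,1} = [\bernw{1}{q}{\nu}{\alpha}]_{x,1}\cdot \bigl[\bernwtensor{1}{q}{\nu}{q\alpha}{J-1}\bigr]_{x,2}.
\]
The first factor contributes $\bernw{1}{q}{\nu}{\alpha}(i_1,a;i_1+a-b,b)$, and conservation forces the intermediate vertical occupation to be $i_1+a-b$, which is then the input for the remaining $J-1$ levels.

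The technical heart of the argument is the marginal/conditional factorization
\[
\Lambda\bigl(j_1;(a,h_{x,2},\dots,h_{x,J})\bigr) = p^{(J)}(a\mid j_1)\cdot \widetilde{\Lambda}\bigl(j_1-a;(h_{x,2},\dots,h_{x,J})\bigr),
\]
where $\widetilde{\Lambda}$ denotes the mixing measure defined as in (\ref{eq.Lambda}) but for $J-1$ horizontal levels. This is a direct calculation: after reindexing $y\mapsto y-1$, the weight $\prod_{y\ge 2:\, h_{x,y}=1} q^y$ becomes $q^{j_1-a}\prod_{y'\ge 1:\, h'_{y'}=1} q^{y'}$ on the event $\phi(\vec{h})=j_1$, and the global $j_1$-dependent prefactor is absorbed into the normalization. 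The marginal of $h_{x,1}$ produced in this way is precisely $p^{(J)}(a\mid j_1)$, consistent with the explicit formula recalled just before the statement of Lemma~\ref{lem:rec}.

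Combining the two steps, the residual sum over $(h_{x,2},\dots,h_{x,J},h_{x+1,2},\dots,h_{x+1,J})$ at fixed $(a,b)$ is, by the very definition of the fusion construction applied to $J-1$ levels with parameters $q\alpha,q^2\alpha,\dots,q^{J-1}\alpha$, equal to $\bernw{J-1}{q}{\nu}{q\alpha}(i_1+a-b,j_1-a;i_2,j_2-b)$. Summing over $a,b\in\{0,1\}$ with the marginal weight $p^{(J)}(a\mid j_1)$ then yields (\ref{eq:rec}), and uniqueness is immediate because the right-hand side expresses $\bernw{J}$ entirely in terms of $\bernw{J-1}$ and the base case $\bernw{1}$. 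The step I expect to be most delicate is the marginal/conditional identity for $\Lambda$: the specific geometric weights $q^y$ are exactly what make the conditional on $(h_{x,2},\dots,h_{x,J})$ self-similar, so that the remainder is itself an instance of the analogous mixing measure and the recursion closes cleanly on the family $\{\bernw{J}\}$; with any other weighting the decomposition would not reduce to a smaller copy of the same object.
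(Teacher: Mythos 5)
Your argument is correct and is essentially the paper's own proof: the paper's one-line justification ("follows from the definitions of $\bernw{J}{q}{\nu}{\alpha}$ and $p^{(J)}(a|j_1)$ and the decomposition in Figure \ref{fig:decomposes}") is exactly the peeling-off of the first horizontal level that you carry out, and your key step — that conditioning $\Lambda(j_1;\cdot)$ on $h_{x,1}=a$ yields the marginal $p^{(J)}(a|j_1)$ times the $(J-1)$-level mixing measure after the shift $y\mapsto y-1$, with the levels $2,\dots,J$ carrying parameters $q\alpha,\dots,q^{J-1}\alpha$ so the residue is an instance of $\bernw{J-1}{q}{\nu}{q\alpha}$ — is precisely the content of that decomposition. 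No gaps.
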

\begin{proof}
This follows from the definitions  of $\bernw{J}{q}{\nu}{\alpha}$ and $p^{(J)}(a|j_1)$, and the decomposition in Figure \ref{fig:decomposes}.
\end{proof}

\begin{figure}
\begin{center}
\includegraphics[scale=.5]{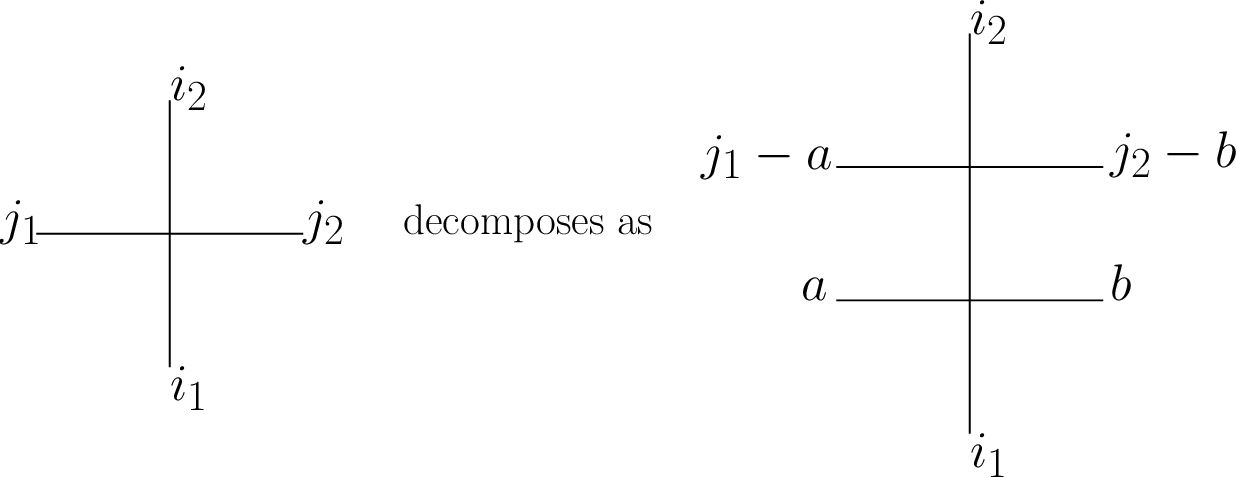}
\end{center}
\caption{The decomposition of a higher horizontal spin vertex.}
\label{fig:decomposes}
\end{figure}

%
%

\subsection{$q$-Racah form of $\Lmat$-matrix weights}\label{sec:qRacah}

%
%
%

We explicitly solve the recurrence relation in Lemma \ref{lem:rec} in a form similar to that which was (to our knowledge) discovered in \cite{Mangazeev2014}. We provide an explicit proof of our $\Lmat$-matrix formula.

\begin{definition}
The regularized terminating basic hypergeometric series is given by\footnote{Pochhammer symbols $(a;q)_{n}$, where $n$ is allowed to be negative are defined as in \texttt{Mathematica} by:
$$
(a;q)_n = \begin{cases} \prod_{k=0}^{n-1} (1-aq^k), & n>0,\\ 1,& n=0,\\ \prod_{k=0}^{-n-1}(1-aq^{n+k})^{-1}, &n<0.\end{cases}
$$}
\begin{multline*}
{}_{r+1}\bar{\phi}_r\left(\begin{matrix} q^{-n};a_1,\dots,a_r\\b_1,\dots,b_r\end{matrix}
\Bigl| q,z\right)=\sum_{k=0}^n z^k\,\frac{(q^{-n};q)_k}{(q;q)_k}	\prod_{i=1}^r (a_i;q)_k(b_iq^k;q)_{n-k}\\
=\prod_{i=1}^r (b_i;q)_n\cdot {}_{r+1}{\phi}_r\left(\begin{matrix} q^{-n},a_1,\dots,a_r\\b_1,\dots,b_r\end{matrix}
\Bigl| q,z\right).
\end{multline*}
\end{definition}

\begin{theorem}\label{thm:qracah}
Fix $J\in \Z_{\geq 1}$. Then, with $\beta :=\alpha q^J$,
\begin{align*}
	\bernw{J}{q}{\nu}{\alpha}(i_1,j_1;i_2,j_2)&= \mathbf{1}_{i_1+j_1=i_2+j_2}
	q^{\frac{2j_1-j_1^2}{4}-\frac{2j_2-j_2^2}{4}+\frac{i_2^2+i_1^2}4+\frac{i_2(j_2-1)+i_1j_1}{2}}
	\\
&\hspace{20pt}\times
	\frac{
	\nu^{j_1-i_2}\alpha^{j_2-j_1+i_2}(-\alpha \nu^{-1};q)_{j_2-i_1}}{(q;q)_{i_2} (-\alpha;q)_{i_2+j_2}
	(\beta \alpha^{-1} q^{1-j_1};q)_{j_1-j_2}}
	{}_{4}\bar{\phi}_3\left(\begin{matrix} q^{-i_2};q^{-i_1},-\beta,-q\nu \alpha^{-1}\\\nu,q^{1+j_2-i_1},\beta \alpha^{-1} q^{1-i_2-j_2}\end{matrix}
	\Bigl|\, q,q\right).
\end{align*}
In particular, $\bernw{J}{q}{\nu}{\alpha}$ is stochastic and satisfies the recursion (\ref{eq:rec}).
\end{theorem}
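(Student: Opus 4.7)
My plan is to proceed by induction on $J$, using the fact (from Lemma \ref{lem:rec}) that the $R$-matrix weights are uniquely determined by their values at $J=1$ together with the recursion (\ref{eq:rec}). So the task splits into two parts: verify the claimed formula at $J=1$, and verify that it satisfies the recursion.

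For the base case $J=1$, I would specialize the formula by setting $\beta=q\alpha$. In this case the constraint $i_1+j_1=i_2+j_2$ together with $j_1,j_2\in\{0,1\}$ forces $(j_1,j_2,i_2)$ to be one of the four admissible tuples listed in Definition \ref{defweightsJ1}, and the upper parameter $q^{-i_2}$ of ${}_4\bar\phi_3$ truncates the sum to at most two terms (in fact a single term in three of the four cases). I would then match the prefactor -- tracking the $q$-powers $q^{(2j_1-j_1^2)/4-(2j_2-j_2^2)/4+\cdots}$, the ratio of Pochhammer symbols in $\nu, \alpha, \beta\alpha^{-1}$ -- against the four expressions in (\ref{eq:bernweights}). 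This is a finite and fully mechanical check; the main care is in bookkeeping the negative Pochhammer indices that appear, e.g. $(q^{-i_1};q)_k$ when $k>i_1$ vanishes, which enforces the correct support.

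For the inductive step, I would substitute the claimed $J$-formula on the left of (\ref{eq:rec}) and the claimed $(J-1)$-formula (with $\alpha\to q\alpha$, so that $\beta=q^J\alpha$ is preserved) on the right. Using the known $J=1$ weights for $\bernw{1}{q}{\nu}{\alpha}$ and the explicit values
\[
p^{(J)}(0|j_1)=\frac{q^{j_1}-q^{J}}{1-q^{J}},\qquad p^{(J)}(1|j_1)=\frac{1-q^{j_1}}{1-q^{J}},
\]
the equation (\ref{eq:rec}) becomes, after cancelling the common prefactor, an identity of the form
\begin{equation*}
{}_{4}\bar{\phi}_3\!\left(\!\begin{matrix} q^{-i_2};q^{-i_1},-\beta,-q\nu\alpha^{-1}\\ \nu,q^{1+j_2-i_1},\beta\alpha^{-1}q^{1-i_2-j_2}\end{matrix}\Bigl|\,q,q\right)
=\sum_{a,b\in\{0,1\}} C_{a,b}\,{}_{4}\bar{\phi}_3\!\left(\!\begin{matrix} q^{-i_2+a-b};q^{-i_1+a-b},-\beta,-\nu\alpha^{-1}\\ \nu,q^{1+j_2-i_1+b},\beta\alpha^{-1}q^{2-i_2-j_2+b}\end{matrix}\Bigl|\,q,q\right),
\end{equation*}
where the four explicit coefficients $C_{a,b}$ are rational functions of $q^{i_1},q^{j_1},q^{i_2},q^{j_2},\alpha,\beta,\nu$. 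I would then prove this identity term-by-term in the summation index $k$: each shift $(a,b)\in\{0,1\}^2$ corresponds to multiplying the summand of the reference ${}_4\bar\phi_3$ by an explicit rational function of $q^k$ (coming from adjacent Pochhammer ratios such as $(1-q^{k-i_1})/(1-q^{-i_1})$, $(1+\beta q^{k})/(1+\beta)$, and $(1-\beta\alpha^{-1}q^{1-i_2-j_2+k})/(1-\beta\alpha^{-1}q^{1-i_2-j_2})$), and re-indexing $k\to k-(a-b)$ in the $(a,b)\neq(0,0)$ terms brings every summand onto a common base. The identity then reduces to a single rational-function relation in $q^k$ that is polynomial in all other parameters; by clearing denominators one is left with a polynomial identity which holds if and only if it holds on a Zariski-dense set, allowing us to verify it by comparing a few specializations or by direct expansion.

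The main obstacle will be this algebraic identity among the four shifted ${}_4\bar\phi_3$ summands. It is essentially a contiguous relation for ${}_4\bar\phi_3$ specialized to a one-parameter family ($\beta=q^J\alpha$ is the only non-generic choice, everything else is free), and while such three- and four-term contiguous relations are classical, the specific combination dictated by the probabilistic recursion is not standard, so the computation must be done by hand. I would organize it by factoring out all common Pochhammer symbols, collecting the $(a,b)$ contributions into a single rational function of $q^k$, and verifying it equals $1$ by partial fractions in $q^k$ -- checking that residues at the poles $q^{k}=q^{i_1-a+b}$, $q^{k}=-\beta^{-1}$, $q^{k}=-\nu^{-1}\alpha$, and $q^{k}=\beta^{-1}\alpha q^{i_2+j_2-1-b}$ all cancel. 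Once this identity is established, the stochasticity claim and the fact that $\bernw{J}{q}{\nu}{\alpha}$ solves (\ref{eq:rec}) follow immediately from Lemma \ref{lem:rec} and the induction, since $\sum_{i_2,j_2}\bernw{J}{q}{\nu}{\alpha}(i_1,j_1;i_2,j_2)=1$ is preserved by the Markov composition defining the left-hand side.
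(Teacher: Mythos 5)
Your top-level strategy coincides with the paper's: verify the $J=1$ case against Definition \ref{defweightsJ1}, then show the explicit formula satisfies the recursion (\ref{eq:rec}), which by Lemma \ref{lem:rec} determines the weights uniquely; the base case and the concluding stochasticity remark are fine. The gap is in the central step, where you reduce the recursion to ``a single rational-function relation in $q^k$'' to be checked by partial fractions. After bringing all five ${}_{4}\bar{\phi}_3$'s to the common base of the left-hand summand (note that $i_2$ is \emph{unchanged} across (\ref{eq:rec}) --- the shifted top parameter is $q^{-i_1-a+b}$, not $q^{-i_2+a-b}$, so the parameter shifts force no re-indexing in $k$ at all), the identity takes the form $\sum_{k=0}^{i_2} s_k\bigl(1-\sum_{a,b}C_{a,b}f_{a,b}(q^k)\bigr)=0$. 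The point is that $\sum_{a,b}C_{a,b}f_{a,b}(q^k)$ is \emph{not} identically $1$ as a rational function of $q^k$: the identity holds only after summation over $k$, through cancellations between different values of $k$. One can see this from the paper's own reduction: the elementary contiguous relation (\ref{general_hypergeometric_identity}) is indeed term-by-term (the two summand ratios add to $1$ pointwise), but the residual three-term relation is exactly the $q$-Racah difference equation (\ref{q_racah_recurrence_koekoek}), and that equation is not term-by-term --- the summand ratios of $\mathsf{R}_n(x\pm1)$ against $\mathsf{R}_n(x)$ have simple poles at $q^{k}=q^{x+1}$ and at $q^{k}=(\gammaqr\deqr q^{x})^{-1}$ whose residues do not cancel against the constant right-hand side $q^{-n}(1-q^{n})(1-\alqr\beqr q^{n+1})$. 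Since the pointwise versions of the four-term and three-term identities are equivalent (via the pointwise relation (\ref{general_hypergeometric_identity})), the rational-function identity you propose to verify is simply false, and the residue check will not close.

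To repair this you must exhibit the telescoping explicitly: write the $k$-th discrepancy in the form $T(q^{k+1})\,s_{k+1}/s_{k}-T(q^{k})$ for a suitable rational $T$ --- equivalently, prove the required three-term contiguous relation by a genuine shift of the summation index, with care at the boundary terms $k=0$ and $k=i_2$. This is doable but it is where essentially all of the work lives, and it is precisely what the paper avoids: after using (\ref{general_hypergeometric_identity}) to normalize the bottom parameters, the paper recognizes the remaining three-term relation (\ref{main_hypergeometric_identity_qRacah_preliminary}) as the known second-order $q$-difference equation for $q$-Racah polynomials under the substitution (\ref{qracah_parameters}), so that only a matching of coefficients $\tilde B,\tilde D$ with $B(x),D(x)$ remains. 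I would either adopt that reduction, or build the telescoping ansatz into your ``common base'' step from the outset rather than expecting a pointwise identity in $q^k$.
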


\begin{rem}
To relate to \cite{Borodin2014vertex}, we may rewrite
\begin{equation*}
\bernw{J}{q}{\nu}{\alpha}(i_1,j_1;i_2,j_2)=(-s)^{j_2} (-sv^{-1})^{j_1-j_2} q^{\frac{2j_1-j_1^2}{4}-\frac{2j_2-j_2^2}{4}} \frac{(q^J q^{1-j_2};q)_{\infty}}{(q^J q^{1-j_1};q)_{\infty}} \tilde{w}^{(J)}_{v}(i_2,j_2,i_1,j_1),
\end{equation*}
where the parameters $(s,v)$ are related to our parameters $(\al,\nu)$ as $\al=-sv,\quad \nu=s^{2}$ and $v=-\al/\sqrt\nu,\quad s=\sqrt\nu$. The general $J$ vertex weight $\widetilde w_v^{(J)}$ from \cite[Corollary 6.5]{Borodin2014vertex} is given by:
\begin{multline*}
\widetilde w_v^{(J)}(i_1,j_1;i_2,j_2)\\
=\mathbf{1}_{i_1+j_1=i_2+j_2}\frac{(-1)^{i_1+j_1} q^{\frac{i_1^2+i_2^2}4+\frac{i_1(j_1-1)+i_2j_2}{2}}s^{j_1-i_1}v^{i_1}(vs^{-1};q)_{j_1-i_2}}{(q;q)_{i_1} (vs;q)_{i_1+j_1}}\, {}_{4}\bar{\phi}_3\left(\begin{matrix} q^{-i_1};q^{-i_2},q^J sv,qsv^{-1}\\s^2,q^{1+j_1-i_2},q^{1+J-i_1-j_1}\end{matrix}
\Bigl|\, q,q\right).
\end{multline*}
\end{rem}

\begin{proof}
In order to prove this, it suffices to check that the $J=1$ formula in the statement of the theorem matches the formulas given in Definition \ref{defweightsJ1}, and then to check that for $J\in \Z_{\geq 1}$, the formula in the theorem  satisfies the recursion (\ref{eq:rec}). The $J=1$ case is easily checked from definitions so we proceed to the recursion. To achieve this aim, we utilize an identity (\ref{general_hypergeometric_identity}) involving ${}_{4}\bar{\phi}_3$, as well as a recasting of our $\Lmat$-matrix in terms of $q$-Racah polynomials and an associated three term recursion for those polynomials.

Let us start by rewriting $\bernw{J}{q}{\nu}{\alpha}$ and the desired recursion. Changing from $\alpha,\nu$ to $s,v$ variables via $\al=-sv,\quad \nu=s^{2}$, we can rewrite
\begin{align}\label{expressing_R_via_phi}
	\bernw{J}{q}{\nu}{\alpha}(i_1,j_1,i_2,j_2)&=\mathbf{1}_{i_1+j_1=i_2+j_2}
	(-1)^{i_2+j_2+j_1}
	q^{\frac{2j_1-j_1^2}{4}-\frac{2j_2-j_2^2}{4}+\frac{i_2^2+i_1^2}4+\frac{i_2(j_2-1)+i_1j_1}{2}}
	\\\nonumber	&\hspace{20pt}\times
	\frac{
	s^{j_1+j_2-i_2}v^{j_2-j_1+i_2}(vs^{-1};q)_{j_2-i_1}}{(q;q)_{i_2} (vs;q)_{i_2+j_2}
	(q^{J}q^{1-j_1};q)_{j_1-j_2}}
	{}_{4}\bar{\phi}_3\left(\begin{matrix} q^{-i_2};q^{-i_1},q^J sv,qsv^{-1}\\s^2,q^{1+j_2-i_1},q^{1+J-i_2-j_2}\end{matrix}
	\Bigl|\, q,q\right)
\end{align}
For the remainder of the proof we will replace $(i_1,j_1,i_2,j_2)$ by $(g,h,g',h')$.
%
By utilizing the formula for $\bernw{1}{q}{\nu}{\alpha}$ and $p^{(J)}$, the desired recursion can be rewritten as follows
(by agreement, $g'=g+h-h'$; note that this quantity does not change throughout the identity):
\begin{multline}\label{R_recursion_proof_1}
	\bernw{R}{q}{\nu}{\alpha}(g,h,g',h')
	\\=
	\frac{(-sv+s^{2} q^{g})(1-q^{h})}{(1-sv)(1-q^{J})} \bernw{J-1}{q}{\nu}{q\alpha}(g,h-1,g',h'-1)
	+
	\frac{(1-s^{2} q^{g})(1-q^{h})}{(1-sv)(1-q^{J})}\bernw{J-1}{q}{\nu}{q\alpha}(g+1,h-1,g',h')
	\\
	+\frac{sv(1-q^{g})(q^{J}-q^{h})}{(1-sv)(1-q^{J})}\bernw{J-1}{q}{\nu}{q\alpha}(g-1,h,g',h'-1)
	-
	\frac{(1-sv q^{g})(q^{J}-q^{h})}{{(1-sv)(1-q^{J})}}\bernw{J-1}{q}{\nu}{q\alpha}(g,h,g',h').
\end{multline}

Using \eqref{expressing_R_via_phi}, we can recast
this desired recursion as an identity between ${}_{4}\bar{\phi}_3$ functions:
\begin{align*}
	&{}_{4}\bar{\phi}_3\left(
	\begin{matrix} q^{-g'};q^{-g},q^J sv,qsv^{-1}\\s^2,q^{1+h'-g},q^{1+J-h-g}
	\end{matrix}
	\Bigl|\, q,q\right)
	=
	\frac{\left(q^h-1\right) \left(s q^g-v\right)}{\left(q^J-1\right) (s-v)}
	{}_{4}\bar{\phi}_3\left(
	\begin{matrix} q^{-g'};q^{-g},q^J sv,sv^{-1}\\s^2,q^{h'-g},q^{1+J-h-g}
	\end{matrix}
	\Bigl|\, q,q\right)
	\\&\hspace{90pt}-\frac{v \left(q^h-1\right) \left(s^2 q^g-1\right) \left(q^{h'}-q^J\right) q^{g+h-h'}}{\left(q^J-1\right) (s-v) \left(s v q^{g+h}-1\right)}
	{}_{4}\bar{\phi}_3\left(
	\begin{matrix} q^{-g'};q^{-g-1},q^J sv,sv^{-1}\\s^2,q^{h'-g},q^{J-h-g}
	\end{matrix}
	\Bigl|\, q,q\right)
	\\&\hspace{90pt}+\frac{q^{-g} \left(q^g-1\right) \left(s q^g-v q^{h'}\right)}{\left(q^J-1\right) (s-v)}
	{}_{4}\bar{\phi}_3\left(
	\begin{matrix} q^{-g'};q^{1-g},q^J sv,sv^{-1}\\s^2,q^{1+h'-g},q^{1+J-h-g}
	\end{matrix}
	\Bigl|\, q,q\right)
	\\&\hspace{90pt}+
	\frac{q^{h-h'} \left(q^{h'}-q^J\right) \left(s v q^g-1\right) \left(v q^{h'}-s q^g\right)}{\left(q^J-1\right) (s-v) \left(s v q^{g+h}-1\right)}
	{}_{4}\bar{\phi}_3\left(
	\begin{matrix} q^{-g'};q^{-g},q^J sv,sv^{-1}\\s^2,q^{1+h'-g},q^{J-h-g}
	\end{matrix}
	\Bigl|\, q,q\right).
\end{align*}
For better notation, let us set $q^{g}=G$, $q^{h}=H$,
and replace $q^{h'}$ by $q^{-g'}GH$. We arrive at the following
identity:
\begin{align}
	&{}_{4}\bar{\phi}_3\left(
	\begin{matrix} q^{-g'};1/G,q^J sv,qsv^{-1}\\s^2,Hq^{1-g'},q^{1+J}/(GH)
	\end{matrix}
	\Bigl|\, q,q\right)
	=
	\nonumber
	\frac{(1-H)(Gs-v)}{(1-q^{J})(s-v)}
	{}_{4}\bar{\phi}_3\left(
	\begin{matrix} q^{-g'};1/G,q^J sv,sv^{-1}\\s^2,Hq^{-g'},q^{1+J}/(GH)
	\end{matrix}
	\Bigl|\, q,q\right)
	\\&\hspace{100pt}\nonumber
	-\frac{v(1-H)(GH-q^{g'+J})(1-s^{2}G)}{(1-q^J)(1-GHsv)(s-v)}
	{}_{4}\bar{\phi}_3\left(
	\begin{matrix} q^{-g'};q^{-1}/G,q^J sv,sv^{-1}\\s^2,Hq^{-g'},q^{J}/(GH)
	\end{matrix}
	\Bigl|\, q,q\right)
	\\&\hspace{100pt}+\nonumber
	\frac{(1-G)(s-Hvq^{-g'})}{(1-q^J)(s-v)}
	{}_{4}\bar{\phi}_3\left(
	\begin{matrix} q^{-g'};q/G,q^J sv,sv^{-1}\\s^2,Hq^{1-g'},q^{1+J}/(GH)
	\end{matrix}
	\Bigl|\, q,q\right)
	\\&\hspace{100pt}+
	\frac{(GH-q^{g'+J})(s-Hvq^{-g'})(1-Gsv)}{(s-v)(1-q^J)(1-GH sv)}
	{}_{4}\bar{\phi}_3\left(
	\begin{matrix} q^{-g'};1/G,q^J sv,sv^{-1}\\s^2,Hq^{1-g'},q^{J}/(GH)
	\end{matrix}
	\Bigl|\, q,q\right).
	\label{main_hypergeometric_identity}
\end{align}
Note that for integer $g'=0,1,2,\ldots$,
both sides of the above identity are rational functions
in $G,H,q^{J},s,v$, and $q$.

We will now use the following simple identity
for basic hypergeometric series
(which can be readily verified
by looking at individual terms in ${}_{4}\bar{\phi}_3$):
\begin{multline}
	{}_{4}\bar{\phi}_3\left(\begin{matrix} q^{-n};b,c,d\\u,v,w\end{matrix}
	\Bigl|\, q,z\right)\\=
	\frac{(1-d) (c-w) }{(c-d) \left(1-w q^n\right)}
	{}_{4}\bar{\phi}_3\left(\begin{matrix} q^{-n};b,c,dq\\u,v,wq\end{matrix}
	\Bigl|\, q,z\right)
	+\frac{(1-c) (w-d)}{(c-d) \left(1-w
   q^n\right)}
   {}_{4}\bar{\phi}_3\left(\begin{matrix} q^{-n};b,cq,d\\u,v,wq\end{matrix}
	\Bigl|\, q,z\right).
	\label{general_hypergeometric_identity}
\end{multline}
Using this identity, one can rewrite summands in the right-hand side of \eqref{main_hypergeometric_identity} with bottom arguments $Hq^{-g'}$ or $q^{J}/(GH)$ in terms of basic hypergeometric functions
with bottom arguments $Hq^{1-g'}$ and $q^{1+J}/(GH)$.
Let us denote the three resulting
functions by
\begin{align*}
	\Phi_q&:={}_{4}\bar{\phi}_3\left(
	\begin{matrix} q^{-g'};q/G,q^J sv,sv^{-1}\\s^2,Hq^{1-g'},q^{1+J}/(GH)
	\end{matrix}
	\Bigl|\, q,q\right),
	\\
	\Phi_1&:={}_{4}\bar{\phi}_3\left(
	\begin{matrix} q^{-g'};1/G,q^J sv,qsv^{-1}\\s^2,Hq^{1-g'},q^{1+J}/(GH)
	\end{matrix}
	\Bigl|\, q,q\right),
	\\
	\Phi_{q^{-1}}&:=
	{}_{4}\bar{\phi}_3\left(
	\begin{matrix} q^{-g'};q^{-1}/G,q^J sv,q^2sv^{-1}\\s^2,Hq^{1-g'},q^{1+J}/(GH)
	\end{matrix}
	\Bigl|\, q,q\right).
\end{align*}
Applying \eqref{general_hypergeometric_identity}
to the first term
in the right-hand side of \eqref{main_hypergeometric_identity},
we get
\begin{align*}
	&\frac{(1-H)(Gs-v)}{(1-q^{J})(s-v)}{}_{4}\bar{\phi}_3\left(
	\begin{matrix} q^{-g'};1/G,q^J sv,sv^{-1}\\s^2,Hq^{-g'},q^{1+J}/(GH)
	\end{matrix}
	\Bigl|\, q,q\right)
	=-\frac{(1-G) \left(s-H q^{-g'} v\right)}{\left(1-q^J\right) (s-v)}
	\Phi_{q}
	+
	\frac{1-G H q^{-g'}}{1-q^J}\Phi_{1}.
\end{align*}
The second term in the right-hand side of \eqref{main_hypergeometric_identity}
is rewritten as follows (note that we need to use \eqref{general_hypergeometric_identity}
twice):
\begin{align*}
	&-\frac{v(1-H)(GH-q^{g'+J})(1-s^{2}G)}{(1-q^J)(1-GHsv)(s-v)}
	{}_{4}\bar{\phi}_3\left(
	\begin{matrix} q^{-g'};q^{-1}/G,q^J sv,sv^{-1}\\s^2,Hq^{-g'},q^{J}/(GH)
	\end{matrix}
	\Bigl|\, q,q\right)
	\\&\hspace{90pt}=
	\frac{(G-1) v (G q-1) \left(G s^2-1\right) q^{-g'} \left(s q^{g'}-H v\right) \left(G H s-v q^J\right)}{\left(q^J-1\right) (s-v) (G s-v) (G H s v-1) (G q s-v)}
	{\Phi_{q}}
	\\&\hspace{120pt}
	+
	\frac{G v (G q-1) \left(G s^2-1\right) q^{-g'} \left(q^J-H\right) \left(H v-s q^{g'}\right)}{\left(q^J-1\right) (G s-v) (G H s v-1) (G q s-v)}
	{\Phi_{1}}
	\\&\hspace{120pt}
	-\frac{v (G q-1) \left(G s^2-1\right) q^{-g'} \left(q^{g'}-G H q\right) \left(v q^J-G H q s\right)}{\left(q^J-1\right) (G H s v-1) (G q s-v) \left(G q^2 s-v\right)}
	{\Phi_{1}}
	\\&\hspace{120pt}
	-\frac{G v \left(G s^2-1\right) q^{-g'} \left(q^{J+1}-H\right) (q s-v) \left(q^{g'}-G H q\right)}{\left(q^J-1\right) (G H s v-1) (G q s-v) \left(G q^2 s-v\right)}
	{\Phi_{q^{-1}}}.
\end{align*}
The third term
already contains $\Phi_{q}$, so there is no
need to apply
\eqref{general_hypergeometric_identity}
to it.
The fourth term in the right-hand side of
\eqref{main_hypergeometric_identity} takes the form:
\begin{align*}
	&\frac{(GH-q^{g'+J})(s-Hvq^{-g'})(1-Gsv)}{(s-v)(1-q^J)(1-GH sv)}
	{}_{4}\bar{\phi}_3\left(
	\begin{matrix} q^{-g'};1/G,q^J sv,sv^{-1}\\s^2,Hq^{1-g'},q^{J}/(GH)
	\end{matrix}
	\Bigl|\, q,q\right)
	\\&\hspace{20pt}=
	\frac{(G-1) q^{-g'} (G s v-1) \left(s q^{g'}-H v\right) \left(v q^J-G H s\right)}{\left(q^J-1\right) (s-v) (G s-v) (G H s v-1)}
	{\Phi_{q}}
	+
	\frac{G q^{-g'} (G s v-1) \left(q^J-H\right) \left(s q^{g'}-H v\right)}{\left(q^J-1\right) (G s-v) (G H s v-1)}
	{\Phi_1}.
\end{align*}

We see that the right-hand
side of \eqref{main_hypergeometric_identity}
can be written as a linear combination of $\Phi_{q}$, $\Phi_{1}$, and $\Phi_{q^{-1}}$. Note that the left-hand side of
\eqref{main_hypergeometric_identity} is a multiple of $\Phi_1$.
Moreover, collecting coefficients by
$\Phi_{q}$, $\Phi_{1}$, and $\Phi_{q^{-1}}$, we see that
\eqref{main_hypergeometric_identity}
is equivalent to
\begin{align}\label{main_hypergeometric_identity_qRacah_preliminary}
	\frac{G H q^{-g'}\left(1-q^{g'}\right) \left(1-s v q^J\right)}
	{\left(1-q^J\right) (1-G H s v)}\Phi_1=\tilde B \Phi_{q^{-1}}
	-(\tilde B+\tilde D)\Phi_{1}+\tilde D\Phi_{q},
\end{align}
where we are using the notation
\begin{align*}
	\tilde B&:=-\frac{G v \left(G s^2-1\right) \left(q^{J+1}-H\right) (q s-v)
		\left(1-G H q^{1-g'} \right)}{\left(q^J-1\right) (G H s v-1) (G q s-v) \left(G q^2 s-v\right)},
	\\
	\tilde D&:=\frac{(1-G) \left(s-H q^{-g'} v\right)\left(G H s-v q^J\right)G (q-s v)}
		{\left(1-q^J\right)(G s-v) (G H s v-1)(G q s-v)}.
\end{align*}

To complete the proof, we aim to match this desired identity
\eqref{main_hypergeometric_identity_qRacah_preliminary} to the following $q$-difference relation for the
$q$-Racah orthogonal polynomials \cite[(3.2.6)]{Koekoek1996}:
\begin{align}\label{q_racah_recurrence_koekoek}
	q^{-n}(1-q^{n})(1-\alqr \beqr q^{n+1})\mathsf{R}_n(x)=
	B(x)\mathsf{R}_n(x+1)
	-
	(B(x)+D(x))\mathsf{R}_n(x)
	+D(x)\mathsf{R}_n(x-1),
\end{align}
where
\begin{align*}
	B(x)&=\frac{(1-\alqr q^{x+1})(1-\beqr\deqr q^{x+1})(1-\gammaqr q^{x+1})(1-\gammaqr \deqr q^{x+1})}
	{(1-\gammaqr \deqr q^{2x+1})(1-\gammaqr \deqr q^{2x+2})},
	\\
	D(x)&=
	\frac{q(1-q^{x})(1-\deqr q^{x})(\beqr-\gammaqr q^{x})(\alqr-\gammaqr\deqr q^{x})}
	{(1-\gammaqr \deqr q^{2x})(1-\gammaqr \deqr q^{2x+1})}
	,
\end{align*}
and
\begin{align}
	\label{qracah}
	\mathsf{R}_n(x)=
	{}_{4}{\phi}_3\left(
	\begin{matrix} q^{-n},\alqr\beqr q^{n+1},q^{-x},\gammaqr\deqr q^{x+1}
	\\\alqr q,\beqr \deqr q,\gammaqr q
	\end{matrix}
	\Bigl|\, q,q\right), \qquad
	n=0,1,2,\ldots,N,
\end{align}
with either of the following three truncations holding:
\begin{align}
	q^{-N}=\begin{cases}
		\alqr q;\\
		\beqr\deqr q;\\
		\gammaqr q.
	\end{cases}
	\label{q_racah_truncations}
\end{align}
The above quantities $\mathsf{R}_n(x)$, $x=0,1,2,\ldots,N$,
are polynomials in $q^{-x}+\gammaqr\deqr q^{x+1}$ (of degree $n$), and are
orthogonal on $\{0,1,2,\ldots,N\}$
with a certain weight.

In fact, for the identity \eqref{q_racah_recurrence_koekoek}
the truncations \eqref{q_racah_truncations} are not relevant
since \eqref{q_racah_recurrence_koekoek} is
an identity between rational functions in $\alqr,\beqr,\gammaqr,\deqr$.
We will utilize \eqref{q_racah_recurrence_koekoek} with this
understanding.

Note also that since the bottom arguments
of the hypergeometric function for $\mathsf{R}_n(x)$
do not depend on $x$ or $n$, the same identity
\eqref{q_racah_recurrence_koekoek}
also holds if one
replaces ${}_{4}{\phi}_3$
by ${}_{4}\bar{\phi}_3$
in \eqref{qracah}.


Recalling our notation
$G=q^{g}$ and $H=q^{h}$
and setting\footnote{In fact, it is also possible to assign other values to $\alqr,\beqr,\gammaqr,\deqr$ which match all the parameters.
We will not discuss all possible ways of
choosing $\alqr,\beqr,\gammaqr,\deqr$.}
\begin{align}
	n=g';\quad x=g;\quad
	\alqr=s^{2}/q
	;\quad
	\beqr=vq^{J-g'}/s
	;\quad
	\gammaqr=q^{h-g'}
	;\quad
	\deqr=sq^{g'-g-h}/v,
	\label{qracah_parameters}
\end{align}
one readily sees that
\begin{align*}
	&\Phi_1={}_{4}\bar{\phi}_3\left(
	\begin{matrix} q^{-g'};1/G,q^J sv,qsv^{-1}\\s^2,Hq^{1-g'},q^{1+J}/(GH)
	\end{matrix}
	\Bigl|\, q,q\right)=
	\mathrm{const}\cdot \mathsf{R}_n(x),
	\\&
	\Phi_q=\mathrm{const}\cdot \mathsf{R}_n(x-1),
	\qquad
	\Phi_{q^{-1}}=\mathrm{const}\cdot \mathsf{R}_n(x+1),
\end{align*}
with the overall constant
\begin{align*}
	\mathrm{const}=
	(s^2;q)_{g'}
	(Hq^{1-g'};q)_{g'}
	(q^{1+J}/(GH);q)_{g'}.
\end{align*}
The coefficients in
\eqref{main_hypergeometric_identity_qRacah_preliminary}
and \eqref{q_racah_recurrence_koekoek}
also match:
\begin{align*}
	\left(1-q^J\right) (1-G H s v)\tilde B=
	{GH}\cdot B(x);\qquad
	\left(1-q^J\right) (1-G H s v)\tilde D=
	{GH}\cdot D(x).
\end{align*}
This implies the desired recursion on the R matrix.
\end{proof}

\begin{rem}
		From the above proof we see that the vertex weights
	$\bernw{J}{q}{\nu}{\alpha}(i_1,j_1;i_2,j_2)$ can be interpreted as an analytic
	continuation of $q$-Racah polynomials.
	Namely, if $\nu=s^{2}=q^{-I}$ for some $I=1,2,\ldots$,
	then $\bernw{J}{q}{\nu}{\alpha}(i_1,j_1;i_2,j_2)$
	is a multiple of the $q$-Racah polynomial
	$\mathsf{R}_{i_2}(i_1)$ \eqref{qracah}
	on
	$i_1\in\{0,1,\ldots,I\}$.
	The parameters
	of the polynomial $\mathsf{R}_{i_2}(i_1)$
	are\footnote{Similarly
	one can take $\mathsf{R}_{i_1}(i_2)$, which involves interchanging
	parameters $(\alqr,\beqr)$ with $(\gammaqr,\deqr)$.}
	\begin{align*}
		\alqr=q^{-I-1},\qquad
		\beqr=-\al q^{I+J-i_2}
		,\qquad
		\gammaqr=q^{j_1-i_2}
		,\qquad
		\deqr=-\al^{-1}q^{-I-j_2}
	\end{align*}
	(these are the same parameters as in \eqref{qracah_parameters}
	expressed through $\al=-sv$ and $\nu=s^{2}=q^{-I}$).
	The fact that $\alqr q$ is a negative integer power of $q$
	ensures that the index $i_2$ and the variable $i_1$
	of the $q$-Racah polynomials are restricted to the finite integer segment
	$\{0,1,\ldots,I\}$.

	Note that the parameters of the $q$-Racah polynomials also depend on $(i_1,j_1;i_2,j_2)$,
	and one cannot interpret the weights as values of a single $q$-Racah polynomial
	$\mathsf{R}_{n}(x)$
	(with constant parameters $\alqr,\beqr,\gammaqr,\deqr$)
	at various integer points $x$.
	This observation agrees with the fact that we can choose
	vertex weights to be non-negative (see Proposition \ref{rem:stochweights}),
	but the orthogonal polynomial
	$\mathsf{R}_{n}(x)$ must change sign as a function of $x$.		
\end{rem}

\begin{rem}\label{sec:analcont}
Observe that our $\Lmat$-matrix formula in Theorem \ref{thm:qracah} admits an analytic continuation in the parameter $\beta :=\alpha q^J$.
For $\beta\in \C$ which is not equal $\alpha q^J$ for some $J\in \Z_{\geq 1}$, it is unclear whether $\bernwbeta{\beta}{q}{\nu}{\alpha}$ is stochastic. One can argue, as in the comments after \cite[Corollary 6.6]{Borodin2014vertex}, that for fixed $i_1,j_1$, the sum over $i_2,j_2$ of the $\Lmat$-matrix elements equals 1 (though the nature of this convergence may not be so straightforward). It is the positivity, however, which becomes suspect for general $\beta$. By inspection of terms in the summation defining ${}_{4}\bar{\phi}_3$ one finds that unless $\beta$ takes the special form (which results in many terms being zero in the sum), the terms in the summation are not all of the same sign. Empirical computer testing indicates that for generic $\beta$, $\Lmat$-matrix entries are not always positive.

It is because of this lack of stochasticity that we do not pursue this $\beta$ continuation further at this time. One can certainly (as in the comment from \cite{Borodin2014vertex}) extend the eigenfunction relations to general $\beta$. It may also be possible to extend the duality and the formulas proved in Section \ref{sec:mom} to this non-stochastic setting (in which case, expectations in Section \ref{sec:mom} must be replaced by signed expectations). The only case in which we deal with an extension to general $\beta$ is in Section \ref{sec:qHahn}.
\end{rem}

\section{Moment and $e_q$-Laplace transform formulas}\label{sec:mom}

Combining the first duality result of Corollary \ref{cor:dualhigherJ} with the integrability of $\revhighspinBoson{\alpha}{q^J\alpha}$ given in Corollary \ref{cor:higherJeig}, we may employ results of \cite{BCPS2014} to compute moment formulas and then follow the general scheme developed in \cite{BorodinCorwin2011Macdonald,BorodinCorwinSasamoto2012} to arrive at an $e_q$-Laplace transform formulas for the $J$ higher spin exclusion process (recall from Definition \ref{def:highJBosonTASEP}) with step initial data. The other dualities of Corollary \ref{cor:dualhigherJ} may prove similarly useful in computing moment formulas (perhaps for different choices of initial data), but we do not pursue these here.

\begin{theorem}\label{t.moments}
Fix $J\in \Z_{\geq 1}$, $\beta=\alpha q^J$ and consider the $J$ higher spin exclusion process $\vec{x}(\cdot)$ with step initial data $x_i(0)=-i, i\in \Z_{\geq 1}$. For all $n_1\geq \cdots \geq n_k\geq 1$ and $t\in \Z_{\geq 0}$,
$$
\EE\bigg[\prod_{i=1}^{k} q^{x_{n_i}(t)+n_i}\bigg] = \frac{(-1)^k q^{\frac{k(k-1)}{2}}}{(2\pi \I)^k} \oint_{\ga_1} \cdots \oint_{\ga_k} \prod_{1\leq A<B\leq k} \frac{z_A-z_B}{z_A-q z_B} \prod_{j=1}^{k} \Big(\frac{1-\nu z_j}{1-z_j}\Big)^{n_j} \Big(\frac{1+\beta z_j}{1+\alpha z_j}\Big)^{t} \frac{dz_j}{z_j(1-\nu z_j)}
$$
where the simple closed integration contours $\ga_1,\ldots,\ga_k$ are chosen such that they all contain $1$, the $\ga_A$ contour contains $q\ga_B$ for all $B>A$ and all contours exclude $0$ and $1/\nu$.
\end{theorem}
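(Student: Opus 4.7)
The argument I would give combines three ingredients: the Markov duality just established, the spectral/eigenfunction relation of Corollary \ref{cor:higherJeig}, and the Plancherel theory recalled in Appendix~\ref{sec:Bethe}. The overall shape is the one developed in \cite{BorodinCorwin2011Macdonald,BorodinCorwinSasamoto2012,BorodinCorwin2013discrete}: turn the observable into a dual computation, express the dual evolution through eigenfunctions, and then package everything as a nested contour integral.

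\emph{Step 1 (Reduction via duality).} Let $\vec y\in\Yspace{k}$ be the configuration whose $k$ particles sit, with multiplicity, at the positions $n_1\ge\cdots\ge n_k\ge1$. Then by definition $\HH(\vec x(t),\vec y)=\prod_{i=1}^{k}q^{x_{n_i}(t)+n_i}$. Remark~\ref{rem:welladapt} shows step initial data is well-adapted to $\HH$, so the first bullet of Corollary~\ref{cor:dualhigherJ} gives
\begin{equation*}
\EE\Big[\prod_{i=1}^{k}q^{x_{n_i}(t)+n_i}\Big]
= \big[\big(\revhighspinBoson{\alpha}{q^J\alpha}\big)^t f\big](\vec n), \qquad f(\vec n'):=\HH(\vec x(0),\vec y').
\end{equation*}
The step initial condition $x_i(0)=-i$ makes $q^{(x_i+i)y_i'}=1$ for $i\ge1$, while the convention that $\HH$ vanishes when any particle of $\vec y'$ sits at $i\le0$ yields $f(\vec n')=\mathbf 1_{n_k'\ge1}$. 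So the RHS is the $t$-step evolution, under the space-reversed higher spin zero range process, of the indicator of the positive Weyl chamber.

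\emph{Step 2 (Spectral representation).} I would next use the Plancherel decomposition $\Pli\Pld=I$ on $\Wc^{k}_{\max}$ together with the eigenrelation
\begin{equation*}
\big(\revhighspinBoson{\alpha}{q^J\alpha}\Psil_{\vec z}\big)(\vec n)=\prod_{i=1}^{k}\frac{1+q^J\alpha z_i}{1+\alpha z_i}\,\Psil_{\vec z}(\vec n)
\end{equation*}
of Corollary~\ref{cor:higherJeig}. Provided $f\in \Wc^{k}_{\max}$ (which follows from the same exponential-bound argument used in Remark~\ref{rem:welladapt}, applied to the $\vec n$-variable dependence of $f$), one obtains
\begin{equation*}
\big[\big(\revhighspinBoson{\alpha}{q^J\alpha}\big)^t f\big](\vec n)
= \Pli\Big[\prod_{i=1}^{k}\Big(\tfrac{1+\beta z_i}{1+\alpha z_i}\Big)^{t}(\Pld f)(\vec z)\Big](\vec n).
\end{equation*}
This already places the factor $\bigl(\tfrac{1+\beta z_j}{1+\alpha z_j}\bigr)^{t}$ in the correct position in the claimed formula.

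\emph{Step 3 (Transform of the step function).} The main concrete computation is to evaluate $(\Pld f)(\vec z)$ for $f=\mathbf 1_{n_k'\ge1}$. The direct transform $\Pld$ is given in Appendix~\ref{sec:Bethe} as a symmetric sum of $\Psir_{\vec z}$ against $f$ over $\vec n'\in\Ynspace{k}$. Because $f$ only enforces the single constraint $n_k'\ge1$, each summand factors over the $k$ coordinates up to the nested-ordering issue; the standard symmetrization/telescoping identity used in \cite{BCPS2014,BorodinCorwinSasamoto2012} collapses the resulting sum and produces a rational function whose only singularities are at $z_j=0$ and $z_j=1/\nu$. Combining this with the explicit form of $\Psil_{\vec z}(\vec n)$ (containing the Cauchy-type weight $\prod_{A<B}(z_A-z_B)/(z_A-qz_B)$ and $\prod_j((1-\nu z_j)/(1-z_j))^{n_j}$), one reads off precisely the integrand of the theorem, together with the prefactor $(-1)^k q^{k(k-1)/2}$ arising from the antisymmetrization. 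The nested contours enclosing $1$ (and excluding $0,1/\nu$), with $\ga_A\supset q\ga_B$ for $B>A$, are the canonical contours built into $\Pli$ in Appendix~\ref{sec:Bethe}, reflecting that no residues are picked up at the poles $z_A=qz_B$ when shifting contours.

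\emph{Main obstacle.} Steps 1 and 2 are essentially formal once duality and the Plancherel theorem are in hand. The real work is Step~3: identifying that the direct transform of the step function actually lies in $\Wc^k_{\max}$ (needed to legitimize the Plancherel manipulation), and then performing the telescoping summation that produces the clean measure $dz_j/\bigl(z_j(1-\nu z_j)\bigr)$. Both of these are analogues of computations carried out in \cite{BCPS2014}, so once the present duality is translated into the spectral setting, one can import those identities directly.
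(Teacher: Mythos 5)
Your proposal is correct and follows essentially the same route as the paper: the paper's proof likewise reduces the moment via the $\HH$-duality of Corollary \ref{cor:dualhigherJ} to a $t$-step evolution under $\revhighspinBoson{\alpha}{q^J\alpha}$, diagonalizes that evolution using Corollary \ref{cor:higherJeig} together with the Plancherel theory of Appendix \ref{sec:Bethe}, and imports the remaining computation (the transform of the step-initial-data observable and the resulting nested contour formula) from \cite[Corollary 5.19]{BCPS2014} with the eigenvalue $\prod_j\frac{1-\mu z_j}{1-\nu z_j}$ replaced by $\prod_j\frac{1+\beta z_j}{1+\alpha z_j}$. Your identification of the initial condition as the indicator $\mathbf 1_{n_k'\ge 1}$ and of Step 3 as the genuinely substantive part matches the paper's (deliberately brief) treatment.
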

\begin{proof}
This is proved in the same manner as \cite[Corollary 5.19]{BCPS2014}, replacing the eigenvalue $\prod_{j=1}^{k}\frac{1-\mu z_j}{1-\nu z_j}$ therein by the present value $\prod_{j=1}^{k}\frac{1+\beta z_j}{1+\alpha z_j}$. Rather than repeating the proof, we briefly explain the idea behind it. The first duality result of Corollary \ref{cor:dualhigherJ} implies that
$$\EE\Big[\prod_{i=1}^{k} q^{x_{n_i}(t+1)+n_i}\Big] = \revhighspinBoson{\alpha}{q^J\alpha} \EE\Big[\prod_{i=1}^{k} q^{x_{n_i}(t)+n_i}\Big],$$
where $\revhighspinBoson{\alpha}{q^J\alpha}$ acts on the $\vec{n}$ variables via the association of $\vec{n}$ with $\vec{y}$, as in Definition \ref{deftransprobJ1}. The operator $\revhighspinBoson{\alpha}{q^J\alpha}$ is diagonalized (Corollary \ref{cor:higherJeig}) by the eigenfunctions introduced in Appendix \ref{sec:Bethe} and utilizing the associated Plancherel theory, we arrive at the solution to the time evolution equation satisfied by $\EE\Big[\prod_{i=1}^{k} q^{x_{n_i}(t+1)+n_i}\Big]$, thus proving the theorem.
\end{proof}

For the exclusion process with step initial data, the formula from Theorem \ref{t.moments} provide a complete characterization of the distribution of $\vec{x}(t)$. This is because each random variable $q^{x_{n}(t) +n}$, $1\leq n\leq N$, is in $(0,1]$ for all $t$. The knowledge of all joint moment thus suffices to characterize the joint distribution. Despite this fact, it is not obvious how to extract meaningful asymptotic distribution information from these formulas. In the case of one-point distributions (i.e. the distribution of $x_n(t)$ for a single $n$) this was achieved in \cite{BorodinCorwin2011Macdonald}. We will apply the approach developed therein (in particular, the general restatement of the calculation in \cite{BorodinCorwin2011Macdonald} which can be found in \cite[Section 3]{BorodinCorwinSasamoto2012}).

Theorem \ref{t.fred} provides two Fredholm determinant formulas for what is called the $e_q$-Laplace transform of the observable $q^{x_{n}(t)+n}$, and consequently for the one-point distribution of $x_n(t)$ (see \cite[Proposition 3.1.1]{BorodinCorwin2011Macdonald} or \cite[Proposition 7.1]{BorodinCorwinSasamoto2012}).  This type of Fredholm determinant formula (in particular that of (\ref{MellinBarnes})) is quite amenable to asymptotic analysis -- see for instance \cite{BorodinCorwin2011Macdonald,BorodinCorwinFerrari2012,BorodinCorwinFerrariVeto2014,BorodinCorwinRemenik,Barraquand_qTASEP_2014,FerrariVeto2013,Veto2014qhahn,OConnellOrthmann2014,CorwinSeppalainenShen2014}.

\begin{theorem}\label{t.fred}
Fix $J\in \Z_{\geq 1}$, $\beta=\alpha q^J$ and consider the $J$ higher spin exclusion process $\vec{x}(\cdot)$ with step initial data $x_i(0)=-i, i\in \Z_{\geq 1}$. For all $n\in \Z_{\geq 1}$, $t\in \Z_{\geq 0}$, and $\zeta\in\C\setminus \R_+$,
\begin{equation}\label{MellinBarnes}
\EE \left[\frac{1}{\big(\zeta q^{x_{n}(t)+n};q\big)_{\infty}}\right] = \det\big(I + K_{\zeta}\big)
\end{equation}
where $\det\big(I + K_{\zeta}\big)$ is the Fredholm determinant of $K_\zeta: L^2(C_1)\to L^2(C_1)$ for $C_1$ a positively oriented circle containing 1 with small enough radius so as to not contain 0, $1/q$, and $1/\nu$. The operator $K_\zeta$ is defined in terms of its integral kernel
\begin{equation*}
K_{\zeta}(w,w') = \frac{1}{2\pi \i} \int_{-\i \infty + 1/2}^{\i\infty +1/2} \frac{\pi}{\sin(-\pi s)} (-\zeta)^s \frac{\fredg(w)}{\fredg(q^s w)} \frac{1}{q^s w - w'} ds
\end{equation*}
with
\begin{equation*}
\fredg(w) = \left(\frac{(\nu w;q)_{\infty}}{(w;q)_{\infty}}\right)^{n} \left( \frac{(-\beta w;q)_{\infty}}{(-\alpha w;q)_{\infty}}\right)^t \frac{1}{(\nu w;q)_{\infty}}.
\end{equation*}

The following second formula also holds:
\begin{equation}\label{Cauchy}
\EE \left[\frac{1}{\big(\zeta q^{x_{n}(t)+n};q\big)_{\infty}}\right] = \frac{\det\big(I + \zeta \tilde{K}\big)}{(\zeta;q)_{\infty}}
\end{equation}
where $\det\big(I + \zeta \tilde{K}\big)$ is the Fredholm determinant of $\zeta$ times the operator $\tilde{K}_\zeta: L^2(C_{0,1})\to L^2(C_{0,1})$ for $C_{0,1}$ a positively oriented circle containing 0 and 1 (but not $1/\nu$ or $-1/\alpha$). The operator $\tilde{K}$ is defined in terms of its integral kernel
\begin{equation*}
\tilde{K}(w,w') = \frac{\fredg(w)/\fredg(qw)}{qw'-w}
\end{equation*}
where the function $\fredg$ is as above.
\end{theorem}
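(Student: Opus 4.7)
The plan is to follow the general scheme developed in \cite{BorodinCorwin2011Macdonald,BorodinCorwinSasamoto2012} for converting nested contour moment formulas into Fredholm determinant formulas for $e_q$-Laplace transforms. Since $q^{x_n(t)+n}\in(0,1]$ almost surely (step initial data gives $x_n(t)\geq -n$), the moments $\EE[q^{k(x_n(t)+n)}]$ are uniformly bounded by $1$, so the $q$-exponential series
$$
\EE\left[\frac{1}{(\zeta q^{x_n(t)+n};q)_\infty}\right]=\sum_{k=0}^{\infty}\frac{\zeta^k}{(q;q)_k}\,\EE\bigl[q^{k(x_n(t)+n)}\bigr]
$$
converges absolutely for $\zeta\in\C\setminus\R_+$ (with the appropriate principal branch of $(\zeta;q)_\infty^{-1}$). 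The first step is to substitute the $k$-fold nested contour moment formula from Theorem \ref{t.moments} (taking all $n_i=n$) into each summand.

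For the Mellin--Barnes formula \eqref{MellinBarnes}, the next step is a standard contour deformation: one shrinks all $k$ contours down to the common small contour $C_1$ around $1$, collecting residues from the cross terms $z_A=qz_B$. By the calculation in \cite[Proposition 3.2.1]{BorodinCorwin2011Macdonald} (see also \cite[Section 3]{BorodinCorwinSasamoto2012}), the combinatorics of these residues together with the factor $\frac{(-1)^k q^{k(k-1)/2}}{(q;q)_k}$ rearrange into the expansion of a Fredholm determinant, provided one introduces the Mellin--Barnes integral
$$
\sum_{k\geq 0}\frac{(-\zeta)^k}{(q;q)_k}F_k \;\longleftrightarrow\; \frac{1}{2\pi\i}\int\frac{\pi}{\sin(-\pi s)}(-\zeta)^s \widetilde F(s)\,ds,
$$
which turns the discrete $k$-sum into a continuous contour integral and produces exactly the factor $1/(q^s w-w')$ in the kernel $K_\zeta$. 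The function $\fredg(w)$ arises by combining the product $\big(\frac{1-\nu z}{1-z}\big)^{n}\big(\frac{1+\beta z}{1+\alpha z}\big)^{t}\frac{1}{1-\nu z}$ from Theorem \ref{t.moments} with the $q$-Pochhammer identity $\prod_{j\geq 0}(1-aq^j)=(a;q)_\infty$, so that telescoping of $\fredg(w)/\fredg(q^s w)$ recovers the correct powers. This reproduces \eqref{MellinBarnes}.

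For the Cauchy-type formula \eqref{Cauchy}, the strategy is to avoid the Mellin--Barnes step entirely. Instead, one keeps nested contours but instead of deforming down to a single small contour, one deforms outward to a common contour $C_{0,1}$ enclosing both $0$ and $1$ (but not $1/\nu$ or $-1/\alpha$). The cross-term poles at $z_A=qz_B$ no longer contribute (since $qz_B\in C_{0,1}$ when $z_B\in C_{0,1}$ for small enough contour), and one may recognize the resulting symmetrized integrand directly as the $k$-th coefficient in the Fredholm expansion of $\det(I+\zeta\widetilde K)$, using the identity
$$
\frac{1}{k!}\det\!\Big[\tfrac{1}{qw_a-w_b}\Big]_{a,b=1}^{k}=\frac{1}{(2\pi\i)^k k!}\cdot(\text{symmetrized nested integrand}),
$$
combined with the Cauchy determinant evaluation $\det[1/(qw_a-w_b)]$ times $\prod(w_a-w_b)$, which reconstructs the cross factor $\prod_{A<B}\frac{z_A-z_B}{z_A-qz_B}$. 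Dividing out $(\zeta;q)_\infty$ converts the $q$-exponential generating series to the $e_q$-Laplace transform, yielding the stated identity.

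The main obstacle is the careful bookkeeping of residues and the justification that the contour deformations are valid: one must check that, on the chosen contours, the integrand decays adequately to allow the interchange of the infinite $k$-sum with the contour integrations (and, for \eqref{MellinBarnes}, with the vertical Mellin--Barnes integral). This requires estimating $\fredg(w)/\fredg(q^s w)$ uniformly in $w\in C_1$ and $\Real s=1/2$, using that $|q^s w|$ remains bounded away from $1$, $1/q$ and $1/\nu$. These estimates follow the same pattern as in \cite{BorodinCorwin2011Macdonald,BorodinCorwinSasamoto2012}, with essentially identical analytic structure (the only new ingredient being the factor $\big(\frac{1+\beta z}{1+\alpha z}\big)^{t}$, which is analytic and uniformly bounded in the relevant region since $|\alpha z|$ stays bounded away from $1$ for $z\in C_1$ with small radius).
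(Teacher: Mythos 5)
Your overall route is the same as the paper's: substitute the $n_i\equiv n$ specialization of Theorem \ref{t.moments} into the $q$-exponential series, run the small-contour/Mellin--Barnes machinery of \cite{BorodinCorwin2011Macdonald,BorodinCorwinSasamoto2012} for \eqref{MellinBarnes}, invoke the Cauchy-type identity for \eqref{Cauchy}, and check boundedness of $\fredg(w)/\fredg(q^s w)$ on the relevant contours. Two points need repair, however.

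First, the opening claim that $\sum_{k\ge0}\frac{\zeta^k}{(q;q)_k}\EE[q^{k(x_n(t)+n)}]$ ``converges absolutely for $\zeta\in\C\setminus\R_+$'' is false: since $\EE[q^{k(x_n(t)+n)}]$ need not decay and $(q;q)_k\to(q;q)_\infty>0$, the series only converges for $|\zeta|$ small (certainly $|\zeta|<1$). The identity \eqref{MellinBarnes} must therefore first be proved for $|\zeta|$ sufficiently small (where the $q$-binomial theorem identifies the series with $\EE\big[(\zeta q^{x_n(t)+n};q)_\infty^{-1}\big]$, using $q^{x_n(t)+n}\in(0,1]$), and then extended to all of $\C\setminus\R_+$ by observing that both sides are analytic there. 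This analytic continuation step is essential to reach the stated range of $\zeta$ and is absent from your argument.

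Second, your description of the Cauchy-type formula \eqref{Cauchy} as an outward contour deformation to $C_{0,1}$ ``with no residue contributions'' is not correct as stated: the integrand of Theorem \ref{t.moments} has a simple pole at $z_j=0$ (from $dz_j/z_j$) which the original nested contours exclude but $C_{0,1}$ encloses, so such a deformation does cross poles. The actual derivation (\cite[Proposition 3.10]{BorodinCorwinSasamoto2012}, following \cite[Theorem 3.2.16]{BorodinCorwin2011Macdonald}) proceeds by directly matching the coefficient of $\zeta^k$ in the product of the Fredholm expansion of $\det(I+\zeta\tilde K)$ with the series $(\zeta;q)_\infty^{-1}=\sum_k \zeta^k/(q;q)_k$ against $\mu_k/(q;q)_k$, via a residue computation in which the pole at the origin plays a role; it is not the residue-free unnesting you describe. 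Since you are in any case appealing to the cited propositions as the engine of the proof, the safer (and correct) course is to verify their hypotheses for the present $\fredg$ rather than to re-derive their content by an inaccurate contour argument.
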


\begin{proof}
This type of deduction of Fredholm determinant formulas from $q$-moment formulas has appeared before, cf. \cite{BorodinCorwin2011Macdonald,BorodinCorwinSasamoto2012,BorodinCorwinFerrari2012,BorodinCorwin2013discrete,BorodinCorwinFerrariVeto2014}. Hence, we provide only the steps of the proof, without going into much detail. We also do not recall the definition of Fredholm determinants, cf. \cite[Section 3.2.2]{BorodinCorwin2011Macdonald}.

In order to prove the first formula (which is sometimes called a Mellin-Barnes type formula) we utilize the formula for $\EE\big[q^{k(x_n(t)+n)}\big]$ from specializing all $n_i\equiv n$ in Theorem \ref{t.moments}. Call $\mu_k := \EE\big[q^{k(x_n(t)+n)}\big]$ so as to match it with the formula present in \cite[Definition 3.1]{BorodinCorwinSasamoto2012}, subject to defining $f(w):= \fredg(w)/\fredg(qw)$, with $\fredg$ from the statement of Theorem \ref{t.fred}. We may then apply \cite[Propositions 3.3 and 3.6]{BorodinCorwinSasamoto2012} with the contour $C_{A}=C_1$, and $D_{R,d}, D_{R,d;k}$ specified by setting $R=1/2$ (and $d$ arbitrary, as it does not matter for this choice of $\Lmat$). The output of these propositions is that
$$
\sum_{k\geq 0} \mu_k \frac{\zeta^k}{k_q!} =  \det\big(I + K_{\zeta}\big).
$$
In the course of applying these propositions, it is necessary to check that a few technical conditions on the contours, as well as $\zeta$ and $\fredg$ are satisfied. These are easily confirmed for $|\zeta|$ small enough, and $C_{1}$ a small enough circle around 1. The only condition depending on the function $\fredg$ is that $|\fredg(w)/\fredg(q^sw)|$ remain uniformly bounded as $w\in C_{1}$, $k\in\Z_{\geq 1}$ and $s\in D_{R,d;k}$ varies. This is readily confirmed for $\fredg$ from the statement of Theorem \ref{t.fred}.

Now, observe that for $\zeta$ with $|\zeta|$ small enough, we also have that
$$
\sum_{k\geq 0} \mu_k \frac{\zeta^k}{k_q!}= \EE \left[\frac{1}{\big(\zeta q^{x_{n}(t)+n};q\big)_{\infty}}\right].
$$
This is justified (as in \cite[Theorem 3.2.11]{BorodinCorwin2011Macdonald}) by the fact that $q^{x_{n}(t)+n}\in (0,1)$ and an application of the $q$-Binomial theorem. This establishes (\ref{MellinBarnes}) for $|\zeta|$ sufficiently small and since both sides are analytic in $\C\setminus \R_{+}$ the general $\zeta$ result of (\ref{MellinBarnes}) follows via analytic continuation.

The Cauchy type formula (\ref{Cauchy}) follows from \cite[Proposition 3.10]{BorodinCorwinSasamoto2012} along with a small amount of algebra. The proof essentially follows that of \cite[Theorem 3.2.16]{BorodinCorwin2011Macdonald}.
\end{proof}

\begin{rem}
Setting $g_i(t) := x_{i-1}(t)-x_{i}(t)$, $\vec{g}(t)$ evolves as the zero range process with step initial data corresponding to having $g_1(0)=+\infty$ and $g_i(0)=0$ for $i>1$. Let $C_s(t) = \sum_{i=s+1}^{\infty} g_i(t)$ be the number of particles of $\vec{g}(t)$ strictly to the right of site $s$ at time $t$. Then clearly $\{x_n(t)+n\geq s\} = \{C_{s}(t)\geq n\}$ and hence Theorem \ref{t.fred} provides an exact formula characterizing  the distribution of $C_s(t)$ as well.
\end{rem}

%

\section{Generalizations, specializations and degenerations}\label{sec:spec}

\subsection{Case (2) of Proposition \ref{rem:stochweights}}\label{sec:case2}

Recall that in case (2) of Proposition \ref{rem:stochweights} we assume $q\in (-1,0]$, $\alpha\in (0,1/|q|)$, and $\nu\in \big(-1/|q|,\min(1,\alpha/|q|)\big)$. As already observed in Proposition \ref{rem:stochweights}, under these conditions one readily confirms that the $\Lmat$-matrix is stochastic. Thus, the zero range and exclusion processes built from the $\Lmat$-matrix in Definitions \ref{deftransprobJ1ZRP} and \ref{deftransprobJ1AEP} remain valid. The $q$-Hahn operators $\qHahnHR{q}{\nu}{\alpha}$ and $\qHahnH{q}{\nu}{\alpha}$ remain well-defined. All of the duality and fusion results extend as we now explain.

In Section \ref{sec:jones} all of the results can be extended. Proposition \ref{prophighspindiagJ1} holds since the proof only relies upon \cite[Corollary 4.5 (i)]{Borodin2014vertex} which remains valid in such an extension of parameters. Alternatively, one can observe that the equality demonstrated by the proposition can be analytically continued into the regime of parameters we are considering. In order to show the analyticity, one uses the bound that $\big\vert \tfrac{1-z_i}{1-\nu z_i}\, \tfrac{\alpha+\nu}{1+\alpha}\big\vert<1$ which ensures the convergence of the left-hand side of equality in the proposition. The factor $\tfrac{1-z_i}{1-\nu z_i}$ comes from the eigenfunctions whereas the factor $\frac{\alpha+\nu}{1+\alpha}$ comes from the weight associated with $(0,1;0,1)$, which is the only vertex weight which can occur an unbounded number of times in the application of the operator $\revhighspinBoson{\alpha}{q\alpha}$.
Proposition \ref{propqHahndiag} holds since both sides are rational functions in the parameters and can be extended to the desired range.
Corollary \ref{cor:secrat} holds since the results of Appendix \ref{sec:Bethe} hold for this present case of parameters.
Theorem \ref{thmimplicit} holds since the only properties the proof relies upon are those notes in Remark \ref{rem:followingprop} (both of which remain valid under the present choice of parameters).
Theorem \ref{thm:Gduality} holds since the proof only relies upon \cite[Proposition 1.2]{Corwin2014qmunu} which holds as rational identity in the parameters $q,\nu,\alpha$ and clearly extends to the range of parameters presently considered.

All of the results of Section \ref{sec:fus} besides those pertaining to the exclusion process hold as long as the triple $(q,\nu,\alpha q^j)$ satisfies the conditions of case (2) of Proposition \ref{rem:stochweights} for $1\leq j\leq J$. This is because they all follow from results of Section \ref{sec:jones}.

We do not pursue modifying the results of Section \ref{sec:mom}. As it stands, the formula in Theorem \ref{t.moments} is not well adapted to taking $q$ negative (because of the nested structure of the contours). Instead, one might first shrink the nested contours to all lie around $1$ (cf. \cite[Proposition 3.2]{BCPS2014}) and then argue by analytic continuation that the formula in which the parameters have been extends to case (2) of Proposition \ref{rem:stochweights}. Such a formula would then serve as the input to establishing a result like Theorem \ref{t.fred}. We leave the justifications necessary to prove such formulas to future work.

%
%
%
%

\subsection{$\Lmat$-matrix reflection and inversion}\label{sec:reflectioninversion}

Let $I,J\in\Z_{\geq 1}$. Temporarily, to emphasize the role of $I$ and $J$, let us write
$$R(i_1,j_1;i_2,j_2|I,J,\alpha,q) = \bernw{J}{q}{\nu}{\alpha},$$
where the right-hand side implicitly depends on $q$ as well as $I$ via $\nu = q^{-I}$.

There are two actions on a vertex we consider. The first is reflection in the diagonal under which $(i_1,j_1;i_2,j_2)\mapsto (j_1,i_1;j_2,i_2)$ and the second is inversion of arrows under which $(i_1,j_1;i_2,j_2)\mapsto (I-i_1,J-j_1;I-i_2,J-j_2)$. The result of these two actions on the $\Lmat$-matrix is quite simple.
\begin{proposition}
Let $\hat{\alpha}=1/\alpha$ and $\hat{q}=1/q$ then
\begin{align*}
R(j_1,i_1;j_2,i_2|I,J,\alpha,q) &=  R(i_1,j_1;i_2,j_2|J,I,\hat{\alpha},\hat{q}),\\
R(I-i_1,J-j_1;I-i_2,J-j_2|I,J,\alpha,q) &= R(i_1,j_1;i_2,j_2|I,J,\hat{\alpha},\hat{q}).
\end{align*}
Composing the two transformations (reflection and inversion) results in yet a fourth stochastic $\Lmat$-matrix in which $(I,J)\mapsto (J,I)$ and $\alpha,q$ remain fixed.
\end{proposition}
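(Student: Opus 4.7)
The plan is to handle the two symmetries separately, reducing each to the $J=1$ case and then lifting to general $J\in\Z_{\geq1}$. Both identities at $J=1$ reduce to four elementary computations from the explicit weights in Definition \ref{defweightsJ1}; for instance, the inversion claim applied to $\bernw{1}{q}{\nu}{\alpha}(m,0;m,0)=\tfrac{1+\alpha q^{m}}{1+\alpha}$ (with $\nu=q^{-I}$) is matched against $\bernw{1}{1/q}{q^{I}}{1/\alpha}(I-m,1;I-m,1)=\tfrac{1/\alpha+q^{I}\cdot q^{-(I-m)}}{1+1/\alpha}$, and the remaining three weights and the reflection identity are verified analogously.

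To extend the inversion symmetry to $J\geq2$, I would induct on $J$ using the recursion \eqref{eq:rec}. The key step is to check that the coefficients $p^{(J)}(a\mid h)$ transform appropriately under inversion of arrows: a direct computation from the definition yields
\begin{equation*}
p^{(J)}(a\mid J-h)\big|_{q\to1/q}\;=\;p^{(J)}(1-a\mid h).
\end{equation*}
Applying inversion to both sides of \eqref{eq:rec} and then invoking the induction hypothesis on the $\bernw{J-1}{q}{\nu}{q\alpha}$ factor (noting that under $\alpha\mapsto1/\alpha$, $q\mapsto1/q$ the shift $\alpha\mapsto q\alpha$ becomes $\hat{\alpha}\mapsto\hat{q}\hat{\alpha}$, so the inductive statement has the correct shape) closes the induction.

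For the reflection symmetry at $J\geq 2$, the fusion recursion is not well adapted since it treats the horizontal and vertical directions asymmetrically. Instead I would work directly from the explicit $q$-Racah form in Theorem \ref{thm:qracah}. The remark following that theorem identifies the ${}_{4}\bar\phi_3$ factor with an evaluation of a $q$-Racah orthogonal polynomial $\mathsf{R}_{i_2}(i_1)$ whose parameters $\alqr,\beqr,\gammaqr,\deqr$ are explicit functions of $(I,J,\alpha,q)$. Under the transformation $(i_1,j_1,i_2,j_2)\mapsto(j_1,i_1,j_2,i_2)$ combined with $(I,J,\alpha,q)\mapsto(J,I,1/\alpha,1/q)$, a direct inspection of the formulas \eqref{qracah_parameters} shows that these parameters swap so as to exchange the dual pairs $(\alqr,\beqr)\leftrightarrow(\gammaqr,\deqr)$, and the resulting equality of ${}_{4}\bar\phi_3$ factors then follows from the classical Leonard duality $\mathsf{R}_n(x)=\mathsf{R}_x(n)$ of $q$-Racah polynomials. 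I expect the main obstacle to be the bookkeeping for the prefactor: matching the two sides of the identity requires repeated use of $(a;q^{-1})_k=(a^{-1};q)_k(-a)^{k}q^{-\binom{k}{2}}$ together with the identification $\nu=q^{-I}$, so that the monomial and $q$-Pochhammer factors outside the ${}_{4}\bar\phi_3$ agree.

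The fourth stochastic $R$-matrix is then an immediate consequence: reflection followed by inversion sends $(I,J,\alpha,q)\mapsto(J,I,1/\alpha,1/q)\mapsto(J,I,\alpha,q)$, so the composition is a stochastic $R$-matrix with $(I,J)$ swapped and $\alpha,q$ unchanged, as claimed.
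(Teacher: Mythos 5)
Your plan is sound, and it is substantially more detailed than the paper's own proof, which consists of the single sentence that both relations ``can be confirmed directly from our explicit formulas for the $R$-matrices given in Theorem \ref{thm:qracah}.'' Your treatment of the inversion symmetry is a genuinely different route: instead of manipulating the ${}_{4}\bar{\phi}_3$ formula you induct on $J$ through the fusion recursion \eqref{eq:rec}, and the key lemma you isolate is correct --- indeed $p^{(J)}(0\mid J-h)\big|_{q\to 1/q}=\tfrac{q^{h-J}-q^{-J}}{1-q^{-J}}=\tfrac{1-q^{h}}{1-q^{J}}=p^{(J)}(1\mid h)$, and under the relabelling $(a,b)\mapsto(1-a,1-b)$ the three factors in \eqref{eq:rec} match term by term once one notes that $\widehat{q\alpha}=\hat q\hat\alpha$, so the induction closes; this avoids all hypergeometric bookkeeping and is arguably cleaner than the paper's verification for that half of the statement. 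Two caveats. First, your opening claim that both identities at $J=1$ reduce to four computations from Definition \ref{defweightsJ1} fails for the reflection identity: there the right-hand side is $R(i_1,j_1;i_2,j_2\,|\,1,I,\hat\alpha,\hat q)$, a horizontal spin $I/2$ object not covered by that definition. This is harmless only because your $q$-Racah argument for reflection applies uniformly to all $J\geq 1$, so no separate base case is needed. Second, for the reflection symmetry you are essentially following the paper (direct comparison of the formulas of Theorem \ref{thm:qracah}), and the step you defer --- matching the monomial and Pochhammer prefactors and reconciling the base change $q\to 1/q$ with the self-duality $\mathsf{R}_n(x)\leftrightarrow\mathsf{R}_x(n)$, which in the remark after Theorem \ref{thm:qracah} is a duality at \emph{fixed} base $q$ --- is exactly where the content of that verification lies; it does go through, but until it is written out this half of your argument remains a plan, roughly at the paper's own level of detail. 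The deduction of the fourth stochastic $R$-matrix from the composition is immediate and correct.
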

\begin{proof}
These relations can be confirmed directly from our explicit formulas for the $\Lmat$-matrices given in Theorem \ref{thm:qracah}.
\end{proof}
Cases (3) and (4) of Proposition \ref{rem:stochweights} are related by inversion of arrows: The range $q\in [0,1)$ maps to $q\in (1,+\infty)$ and the range $\alpha<-q^{-I}$ maps to $-q^{-I}<\alpha<0$. Despite this relationship, we will still consider each case separate. This is because when we construct our zero range process from the $\Lmat$-matrix, the inversion of arrows takes a state $\vec{g}$ with $\sum g_i=k$ to an state with an infinite number of particles.

Note that applying reflection to cases (1) and (2) of Proposition \ref{rem:stochweights} leads to different $\Lmat$-matrices and corresponding processes than we have presently considered. We do not pursue the study of the resulting systems any further here.

\subsection{Case (3) of Proposition \ref{rem:stochweights}}\label{sec:case3}

Recall that in case (3) of Proposition \ref{rem:stochweights} we assume that $q\in (0,1)$, $\nu = q^{-I}$ for $I\in \Z_{\geq 1}$, and $\alpha<-\nu$. With this choice of $I$, $\VI^I$ is finite dimensional and given by the span of $\{0,\ldots, I\}$. Consequently we must replace $\Ginf, \Gspace{k}, \Gmspace{k}, \Yinf, \Yspace{k}, \Ynspace{k}$ by $\Ginf_I, \Gspace{k}_I, \Gmspace{k}_I, \Yinf_I, \Yspace{k}_I, \Ynspace{k}_I$ as given at the end of Definition \ref{deftransprobJ1}. The construction of the zero range processes with transition operators $\highspinBoson{\alpha}{q\alpha}$ and $\revhighspinBoson{\alpha}{q\alpha}$ remains unchanged. However, in this finite spin case, we cannot define the exclusion process with a right-most particle (whose transition operator was $\highspinTASEP{\alpha}{q\alpha}$). This is because that construction required having an infinite gap and in the finite spin setting, that is not allowed. It may be possible to define an exclusion process with a doubly infinite state space, so long as the gaps between particles is bounded by $I$. However, as we have no use for that presently, we do not pursue it. Definition \ref{defweightsJ1} remains well-defined, even though $\varphi_{q,-\alpha,\nu}(s|y)$ is not necessarily positive (in fact, it has sign $(-1)^s$) for integers $0\leq s\leq y\leq I$. Likewise, the two equations of Remark \ref{rem:followingprop} remain valid (one readily observes that $1+\alpha  q^{y-s}$ remains non-zero for our choice of $\alpha$ and for all $0\leq s\leq y\leq I$).

The eigenfunction
relation of Proposition \ref{prophighspindiagJ1} follows since the
results of  \cite[Corollary 4.5 (i)]{Borodin2014vertex} are stated in
sufficient generality. It is also possible to show that the left-hand
and right-hand sides of the relation in the proposition are analytic
functions of $\nu$ and $\alpha$ in a suitable open domain which
connects the parameters from case (1) of Proposition
\ref{rem:stochweights} to the present choices. Analytic continuation
then implies the extension of the relation to the present case. This
argument only requires showing that for some fixed $z_i$, there is an
open domain of $(\nu,\alpha)\in \C^2$ connecting case (1) and case (3)
of Proposition \ref{rem:stochweights} such that the relation
$\big\vert \tfrac{1-z_i}{1-\nu z_i}\,
\tfrac{\alpha+\nu}{1+\alpha}\big\vert<1$ is preserved throughout the
domain. This fact is readily checked. Proposition \ref{propqHahndiag}
holds as written.

We will not make use of Corollary \ref{cor:secrat}
or
Definition \ref{def:welladapt} and there appear not to be natural
examples of initial data satisfying the well-adaptness of Remark
\ref{rem:welladapt}. Since we have not formulated an exclusion process
for these parameters, we do not have an analog of Theorem
\ref{thmimplicit} presently. Theorem \ref{thm:Gduality}, however,
holds for the choice of parameters. Since we cannot make use of
well-adaptedness, we demonstrate this result by analytic continuation.
Let us focus on establishing the first of the dualities (the other follows similarly)
$$
\highspinBoson{\alpha}{q\alpha} \GG = \GG \big(\revhighspinBoson{\alpha}{q\alpha}\big)^{T}.
$$
Similarly to the above, note that there exists an open domain of $(\nu,\alpha)\in \C^2$ connecting case (1) and case (3) of Proposition \ref{rem:stochweights} along which $\big\vert \tfrac{\alpha+\nu}{1+\alpha}\big\vert<\delta$ for some $\delta<1$. Consequently, both sides of the above identity are analytic functions of $\nu$ and $\alpha$ and the result follows. The reason for analyticity is due to the fact that for $\vec{g}$ and $\vec{y}$ fixed, the only $\Lmat$-matrix weight which can be used an unbounded number of times is $\tfrac{\alpha+\nu}{1+\alpha}$, corresponding with $(0,1;0,1)$-vertices. That this weight is bounded above by $\delta<1$ in magnitude ensures the convergence of the left-hand and right-hand sides and hence the analyticity. It should be noted that as we deform parameters, the transition operators are no longer stochastic and it is only after arriving at the terminal locations for parameters that this property is restored.

All of the results of Section \ref{sec:fus} besides those pertaining to the exclusion process hold as long as the triple $(q,\nu,\alpha q^j)$ satisfy the conditions of case (3) of Proposition \ref{rem:stochweights} for $1\leq j\leq J$. This is because they all follow from results of Section \ref{sec:jones}.

\subsection{Case (4) of Proposition \ref{rem:stochweights}}\label{sec:case4}

Recall that in case (3) of Proposition \ref{rem:stochweights} we assume that $q\in (1,\infty)$, $\nu = q^{-I}$ for $I\in \Z_{\geq 1}$, and $-\nu<\alpha<0$. With this choice of $I$, $\VI^I$ is finite dimensional and given by the span of $\{0,\ldots, I\}$. Consequently we must replace $\Ginf, \Gspace{k}, \Gmspace{k}, \Yinf, \Yspace{k}, \Ynspace{k}$ by $\Ginf_I, \Gspace{k}_I, \Gmspace{k}_I, \Yinf_I, \Yspace{k}_I, \Ynspace{k}_I$ as given at the end of Definition \ref{deftransprobJ1}. The construction of the zero range processes with transition operators $\highspinBoson{\alpha}{q\alpha}$ and $\revhighspinBoson{\alpha}{q\alpha}$ remains unchanged. However, as in Section \ref{sec:case3} we  cannot define the exclusion process with a right-most particle. Definition \ref{defweightsJ1} remains well-defined, even though $\varphi_{q,-\alpha,\nu}(s|y)$ is not necessarily positive (in fact, it has sign $(-1)^s$) for integers $0\leq s\leq y\leq I$. Likewise, the two equations of Remark \ref{rem:followingprop} remain valid.

The eigenfunction
relation of Proposition \ref{prophighspindiagJ1} follows since the
results of  \cite[Corollary 4.5 (i)]{Borodin2014vertex} are stated in
sufficient generality. Proposition \ref{propqHahndiag} holds as
written. We will not make use of Corollary \ref{cor:secrat} or Definition
\ref{def:welladapt}. Since we have not formulated an exclusion process
for these parameters, we do not have an analog of Theorem
\ref{thmimplicit} presently.

Theorem \ref{thm:Gduality} follows from analytic continuation of the
analogous result in case (3) with the following modification: In the
case of the $\GG$ duality, one should restrict to $\vec{g}\in
\Gspace{k}$ and $\vec{y}\in \Yspace{k'}$ for some $k,k'\in \Z_{\geq
1}$ (otherwise terms may fail to be finite). We may also, without loss
of generality, assume that $\vec{g}\in \Gspace{k}$ and $\vec{y}\in
\Yspace{k'}$ for some $k,k'\in \Z_{\geq 1}$ when considering the
$\GGhat$ duality (see the end of the proof of Theorem
\ref{thm:Gduality} for an explanation of how to go from this to
general $\vec{g}$ and $\vec{y}$). So, given  $\vec{g}\in \Gspace{k}$
and $\vec{y}\in \Yspace{k'}$ let us see how this analytic continuation
works. Let us focus on establishing the first duality (the other one
follows similarly)
$$
\highspinBoson{\alpha}{q\alpha} \GG = \GG \big(\revhighspinBoson{\alpha}{q\alpha}\big)^{T}.
$$
This holds for case (3), in which $q\in (0,1)$ and $\alpha<-q^{-I}$. Think of $q,\alpha$ as complex variables and fix $\nu=q^{-I}$ for $I\in \Z_{\geq 1}$. We want to extend this to also hold for case (4) in which $q\in (1,\infty)$ and $\alpha \in (-q^{-I},0)$. We do this by observing that there exists an open domain of $(q,\alpha)\in \C^2$ connecting case (3) to case (4) along which $\big\vert \tfrac{\alpha+\nu}{1+\alpha}\big\vert<\delta$ for some $\delta<1$.
Consequently, both sides of the above identity are analytic functions of $q$ and $\alpha$ and the result follows. The reason for analyticity is due to the fact that for $\vec{g}$ and $\vec{y}$ fixed, the only $\Lmat$-matrix weight which can be used an unbounded number of times is $\tfrac{\alpha+\nu}{1+\alpha}$, corresponding with $(0,1;0,1)$-vertices. That this weight is bounded above by $\delta<1$ in magnitude ensures the convergence of the left-hand and right-hand sides and hence the analyticity.

All of the results of Section \ref{sec:fus} besides those pertaining to the exclusion process hold as long as the triple $(q,\nu,\alpha q^j)$ satisfy the conditions of case (3) of Proposition \ref{rem:stochweights} for $1\leq j\leq J$. This is because they all follow from results of Section \ref{sec:jones}.

\subsection{Stochastic six-vertex model}\label{sec:ssvm}

If $I=J=1$ (so $\nu=1/q$), then the zero range process $\vec{g}(t)$ we have been considering degenerates to the stochastic six-vertex model. In that case, the $\Lmat$-matrix has six non-zero vertex configurations,
and can be parameterized by two parameters $(b_1,b_2)$ which are between zero and one
\cite{GwaSpohn1992},
\cite{BCG6V}:
\begin{center}
	\begin{tabular}{c|c|c|c|c|c}
		\begin{tikzpicture}
		[scale=.75, thick]
		\node (i1) at (0,-1) {$0$};
		\node (j1) at (-1,0) {$0$};
		\node (i2) at (0,1) {$0$};
		\node (j2) at (1,0) {$0$};
		\draw[densely dotted] (j1) -- (j2);
		\draw[densely dotted] (i1) -- (i2);
	\end{tikzpicture}
	&
		\begin{tikzpicture}
		[scale=.75, thick]
		\node (i1) at (0,-1) {$1$};
		\node (j1) at (-1,0) {$0$};
		\node (i2) at (0,1) {$1$};
		\node (j2) at (1,0) {$0$};
		\draw[densely dotted] (j1) -- (j2);
		\draw[->] (i1) -- (0,0);
		\draw[->] (0,0) -- (i2);
		\draw[densely dotted] (i1) -- (i2);
	\end{tikzpicture}
	&
		\begin{tikzpicture}
		[scale=.75, thick]
		\node (i1) at (0,-1) {$1$};
		\node (j1) at (-1,0) {$0$};
		\node (i2) at (0,1) {$0$};
		\node (j2) at (1,0) {$1$};
		\draw[densely dotted] (j1) -- (j2);
		\draw[densely dotted] (i1) -- (i2);
		\draw[->] (i1) -- (0,0);
		\draw[->] (0,0) -- (j2);
	\end{tikzpicture}
	&
		\begin{tikzpicture}
		[scale=.75, thick]
		\node (i1) at (0,-1) {$0$};
		\node (j1) at (-1,0) {$1$};
		\node (i2) at (0,1) {$0$};
		\node (j2) at (1,0) {$1$};
		\draw[densely dotted] (j1) -- (j2);
		\draw[densely dotted] (i1) -- (i2);
		\draw[->] (j1) -- (0,0);
		\draw[->] (0,0) -- (j2);
	\end{tikzpicture}
	&
		\begin{tikzpicture}
		[scale=.75, thick]
		\node (i1) at (0,-1) {$0$};
		\node (j1) at (-1,0) {$1$};
		\node (i2) at (0,1) {$1$};
		\node (j2) at (1,0) {$0$};
		\draw[densely dotted] (j1) -- (j2);
		\draw[densely dotted] (i1) -- (i2);
		\draw[->] (j1) -- (0,0);
		\draw[->] (0,0) -- (i2);
	\end{tikzpicture}
	&
		\begin{tikzpicture}
		[scale=.75, thick]
		\node (i1) at (0,-1) {$1$};
		\node (j1) at (-1,0) {$1$};
		\node (i2) at (0,1) {$1$};
		\node (j2) at (1,0) {$1$};
		\draw[densely dotted] (j1) -- (j2);
		\draw[densely dotted] (i1) -- (i2);
		\draw[->] (0,0) -- (i2);
		\draw[->] (0,0) -- (j2);
		\draw[->] (j1) -- (0,0);
		\draw[->] (i1) -- (0,0);
	\end{tikzpicture}
	\\
	\hline\rule{0pt}{20pt}
	$a_1=1$
	&
	$b_1=\dfrac{1+\al q}{1+\al}$
	&
	$c_1=\dfrac{\al(1-q)}{1+\al}$
	&
	$b_2=\dfrac{\al+q^{-1}}{1+\al}$
	&
	$c_2=\dfrac{1-q^{-1}}{1+\al}$
	&
	$a_2=1$
	\end{tabular}
\end{center}

\medskip
\medskip
Here $c_1=1-b_1$, $c_2=1-b_2$.

The $I=J=1$ vertex weights are nonnegative if and only if the parameters $(\al,q)$ belong to one of the following two families:
	\begin{enumerate}
		\item $q\in (0,1)$, $\al \le -1/q$;
		\item $q\in(1,\infty)$, $\al\in(-1/q,0)$.
	\end{enumerate}
Case (1) above corresponds with case (3) of Proposition \ref{rem:stochweights} whereas the case (2) above corresponds with case (4) of that proposition.
The ratio $b_2/b_1=1/q$ is denoted by $\tau$ in \cite{BCG6V}. Under the present choices of parameters, Theorem \ref{prophighspindiagJ1} degenerates to match the eigenrelations proved in \cite[Section 3.3]{BCG6V} via coordinate Bethe ansatz (see also \cite{Lieb67}). In \cite{BCG6V}, the eigenrelations are used to compute transition probabilities for the finite particle stochastic six-vertex model. The authors then focus on the case when $\tau<1$ (in other words, case (2) above where $q\in (1,\infty)$) and when $\vec{g}(t)$ is started from the step initial data ($g_{i}(0)= \mathbf{1}_{i\geq 1}$). Building on a combination of the approaches developed in \cite{TW_ASEP1} and \cite{BorodinCorwin2011Macdonald}, they compute a contour integral formula for $\EE[\tau^{L\Ndown_n(\vec{g})}]$, $L\in \Z_{\geq 1}$,  and eventually utilize this to compute a $e_{\tau}$-Laplace transform formula for $\tau^{\Ndown_n(\vec{g})}$.

\begin{rem}
If $I=J=2$ and $q,\alpha$  are as in cases (3) and (4) of Proposition \ref{rem:stochweights}, then the $\Lmat$-matrix is stochastic. In this case, there can be up to two particles per site and up to two particles can move in each update step. If $\alpha = -1/q^2$ then the zero range process becomes a deterministic shift. In the six-vertex case ($I=J=1$) this occurs for $\alpha = -1/q$. In that case, ASEP arises from an expansion around this shift, as $\alpha = -1/q + \epsilon$. We can perform the same expansion in the $I=J=2$ case, setting $\alpha = -1/q^2 + \epsilon$. The below table is calculated from the $J=2$ case of Appendix \ref{sec:J123}. The overall matrix (three by three) has rows and columns indexed by $j_1$ and $j_2$ in $\{0,1,2\}$ (respectively) and each matrix entry (a length three column vector) has entries indexed by $i_1$ in $\{0,1,2\}$. All terms of order smaller than $\epsilon$ are left out of the matrix. A quick inspection reveals that subtracting the shift (the order one terms) leaves something which is not stochastic. Indeed, both  $q^3(1+q)/(q-1)$ and $q^5/(1-q^2)$ arises in this $\epsilon$ expansion and their respective signs will always differ.
$$
\left(
\begin{array}{ccc}
 \left(
\begin{array}{c}
 1 \\
 0 \\
 0 \\
\end{array}
\right) & \left(
\begin{array}{c}
 \frac{\epsilon }{1-\frac{1}{q^2}} \\
 \frac{q^2 \epsilon }{1-q^2}+1 \\
 0 \\
\end{array}
\right) & \left(
\begin{array}{c}
 \frac{q \epsilon }{1-q^2} \\
 \frac{q (q+1) \epsilon }{q-1} \\
 1-\frac{q^2 (q+2) \epsilon }{q^2-1} \\
\end{array}
\right) \\
 \left(
\begin{array}{c}
 1-\frac{q^4 \epsilon }{q^2-1} \\
 \frac{q^4 \epsilon }{q^2-1} \\
 0 \\
\end{array}
\right) & \left(
\begin{array}{c}
 \frac{q^2 \epsilon }{q^2-1} \\
 \frac{\left(q^4+q^2\right) \epsilon }{1-q^2}+1 \\
 \frac{q^4 \epsilon }{q^2-1} \\
\end{array}
\right) & \left(
\begin{array}{c}
 0 \\
 \frac{\epsilon }{1-\frac{1}{q^2}} \\
 \frac{q^2 \epsilon }{1-q^2}+1 \\
\end{array}
\right) \\
 \left(
\begin{array}{c}
 1-\frac{q^3 (2 q+1) \epsilon }{q^2-1} \\
 \frac{q^3 (q+1) \epsilon }{q-1} \\
 \frac{q^5 \epsilon }{1-q^2} \\
\end{array}
\right) & \left(
\begin{array}{c}
 0 \\
 1-\frac{q^4 \epsilon }{q^2-1} \\
 \frac{q^4 \epsilon }{q^2-1} \\
\end{array}
\right) & \left(
\begin{array}{c}
 0 \\
 0 \\
 1 \\
\end{array}
\right) \\
\end{array}
\right)
$$
This seems to be a negative indication as to whether one can extract higher spin versions of ASEP in this manner.
\end{rem}

\subsection{$q$-Hahn processes}\label{sec:qHahn}

In Remark \ref{sec:analcont} we observed the possibility to analytically continue our $\Lmat$-matrix weights so as to depend on parameters $\alpha$ and $\beta$. However, for general $\beta \neq \alpha q^{J}$, $J\in \Z_{\geq 1}$, these weights were not always positive. The following proposition provides an exception to this, in which our $\Lmat$-matrix weights reduce to $q$-Hahn distribution weights (Definition \ref{def:qhahn}).

\begin{proposition}
Let $\alpha = -\nu$ and $\beta = -\mu$ for $0\leq \nu\leq \mu<1$. Then
$$\bernwbeta{\beta}{q}{\nu}{\alpha}(i_1,j_1;i_2,j_2) = \mathbf{1}_{i_1+j_1=i_2+j_2} \, \cdot\,\varphi_{\mu}(j_2|i_1)$$
for $i_1,j_1,i_2,j_2\in \Z_{\geq 0}$. Consequently, the space reversed higher spin zero range process $\vec{y}$ and higher spin exclusion process  for these parameters coincides with the $q$-Hahn zero range process\footnote{Also called the $q$-Hahn Boson process, or $(q,\mu,\nu)$-Boson process.} and $q$-Hahn TASEP studied in \cite{Povolotsky2013,Corwin2014qmunu}.
\end{proposition}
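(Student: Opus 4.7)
The equality is purely algebraic: both sides are explicit expressions in $q,\nu,\mu,i_1,j_1,i_2,j_2$, and the probabilistic consequence is then immediate from Definitions \ref{def:highspins} and \ref{def:qhahn} once the matrix-element identity is established. My plan is to specialize the $q$-Racah formula of Theorem \ref{thm:qracah}, extended analytically in $\beta$ as in Remark \ref{sec:analcont}, at $\alpha=\nu$ and $\beta=-\mu$, and collapse it into $\varphi_\mu(j_2\mid i_1)$.

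The first step is direct substitution. Under $\alpha=\nu$, $\beta=-\mu$, the factor $(-\alpha\nu^{-1};q)_{j_2-i_1}$ becomes $(-1;q)_{j_2-i_1}$, the power $\alpha^{j_2-j_1+i_2}$ combines with $\nu^{j_1-i_2}$ into a single $\nu$-power, and $(-\alpha;q)_{i_2+j_2}=(-\nu;q)_{i_2+j_2}$. The hypergeometric factor reduces to
\begin{equation*}
{}_4\bar\phi_3\!\left(\begin{matrix} q^{-i_2};\,q^{-i_1},\,\mu,\,-q \\ \nu,\,q^{1+j_2-i_1},\,-\mu q^{1-i_2-j_2}/\nu \end{matrix}\,\bigg|\,q,q\right),
\end{equation*}
whose distinctive feature is the appearance of $-q$ among the upper parameters together with a $\nu$ on the bottom; this is the kind of parameter alignment that enables a Sears–Watson transformation of terminating balanced ${}_4\phi_3$ series to reshape the sum into a summable form (or, equivalently, one of the ${}_3\phi_2$-specializations of Jackson's terminating sum). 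After that transformation the series telescopes to a ratio of $q$-Pochhammer symbols.

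The third step is bookkeeping: multiply the simplified ${}_4\bar\phi_3$ by the prefactors and the overall $q$-power of Theorem \ref{thm:qracah}, and verify the result rearranges into
\begin{equation*}
\mu^{j_2}\,\frac{(\nu/\mu;q)_{j_2}(\mu;q)_{i_1-j_2}}{(\nu;q)_{i_1}}\,\frac{(q;q)_{i_1}}{(q;q)_{j_2}(q;q)_{i_1-j_2}} \;=\; \varphi_\mu(j_2\mid i_1).
\end{equation*}
The indicator $\mathbf{1}_{i_1+j_1=i_2+j_2}$ is built into the $R$-matrix and ensures consistency; the case $j_2>i_1$ is automatically zero on the right since $1/(q;q)_{i_1-j_2}$ vanishes for negative index. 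Once the weight identity holds, the sequential vertex-by-vertex update described in Definition \ref{def:highspins} factors into independent single-site jumps distributed by $\varphi_\mu(\cdot\mid i_1)$, which is exactly the single-site update defining the $q$-Hahn ZRP (and by the gap/particle transform, the $q$-Hahn TASEP) in Definition \ref{def:qhahn}.

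The main obstacle I expect is locating the precise $q$-series transformation in step two and managing the many $q$-power and Pochhammer cancellations that come with shifting from terminating ${}_4\phi_3$'s to ratios of $q$-Pochhammer symbols. A safer fallback that sidesteps any clever identity is an induction in $j_2$: both sides are rational in the parameters, the base case $j_2=0$ reduces $\varphi_\mu(0\mid i_1)=(\mu;q)_{i_1}/(\nu;q)_{i_1}$ to a direct comparison with the specialized $q$-Racah formula (the ${}_4\bar\phi_3$ simplifying essentially by a single Pochhammer ratio), and the first-order recurrence \eqref{varphi_property_1} satisfied by $\varphi_\mu$ can be matched against an adjacent-term relation for the ${}_4\bar\phi_3$ summand, thereby propagating the identity to all $j_2$.
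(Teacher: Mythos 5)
Your route is the same as the paper's: the proposition is reduced to exactly this specialization of the explicit hypergeometric form of the weights, except that the paper does not carry out the computation itself but cites \cite[Proposition 6.7]{Borodin2014vertex}, where the collapse of the ${}_{4}\bar{\phi}_3$ at $\alpha=\nu$, $\beta=-\mu$ is performed. Your plan for executing it is sound; in particular the specialized series is terminating and balanced, since
$$
q^{1-i_2}\cdot q^{-i_1}\cdot \mu\cdot(-q) \;=\; \nu\cdot q^{1+j_2-i_1}\cdot\bigl(-\mu\nu^{-1}q^{1-i_2-j_2}\bigr),
$$
so Sears-type transformations are legitimately available, and the independence of the final answer from $j_1$ (equivalently from $i_2=i_1+j_1-j_2$) is the structural reason the sum must collapse to a Pochhammer ratio. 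One caution on your fallback: an induction on $j_2$ with $i_1,j_1$ fixed also shifts $i_2$, so the contiguous relation you would need moves two parameters of the ${}_{4}\bar{\phi}_3$ simultaneously, which is not obviously easier than the Sears route. Finally, the paper also records a genuinely different, computation-free argument for the process-level identification: the two transition operators share eigenfunctions and (after the $\beta$-continuation of Remark \ref{sec:analcont}) eigenvalues, by Proposition \ref{propqHahndiag} and Corollary \ref{cor:higherJeig}, which bypasses the hypergeometric identity entirely at the cost of not establishing the vertex-weight equality itself.
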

\begin{rem}
For the above choices of $\alpha,\beta$, the proposition implies that the $\Lmat$-matrix weights do not depend on $j_1$ and hence the processes constructed from these weights become parallel update.
\end{rem}
\begin{proof}
This reduction is essentially proved in \cite[Proposition 6.7]{Borodin2014vertex} by studying how the hypergeometric functions specialized with these parameters. The equality of the associated processes constructed from these $\Lmat$-matrix weights follows immediately. Note, however, that as far as the equality of the processes is concerned, this could be shown directly by noting that the eigenfunctions and eigenvalues coincide under this parameter specialization (cf. Proposition \ref{propqHahndiag} and Corollary \ref{cor:higherJeig}). This route would require extending Corollary \ref{cor:higherJeig} to general $\beta$ in the manner described in Remark \ref{sec:analcont}.
\end{proof}

\subsection{Inhomogeneous parameters}\label{sec:inhomo}

It is possible to define a time-inhomogeneous versions of the zero range and exclusion processes considered earlier by replacing the time $t$ transition operator (from the state time $t$ to that at time $t+1$) by $\highspinBoson{\alpha_t}{\beta_t}$ (or likewise $\highspinTASEP{\alpha_t}{\beta_t}$). As long as we assume $\beta_t = q^{J_t}\alpha_t$ for $J_t\in \Z_{\geq 1}$, this is stochastic and hence generates a Markov chain. The duality of Corollary \ref{cor:dualhigherJ} clearly extends, and since these operators for different $t$ are still diagonalized in the same basis (owing to Corollary \ref{cor:higherJeig}), we are able to develop analogous results to Theorems \ref{t.moments} and \ref{t.fred}. The only change in Theorem \ref{t.moments} is the replacement
$$
\Big(\frac{1+\beta z_j}{1+\alpha z_j}\Big)^{t} \mapsto \prod_{s=0}^{t-1}\frac{1+\beta_s z_j}{1+\alpha_s z_j},
$$
and the corresponding change in Theorem \ref{t.fred} is the replacement in the function $\fredg$
$$
\left( \frac{(-\beta w;q)_{\infty}}{(-\alpha w;q)_{\infty}}\right)^t \mapsto  \prod_{s=0}^{t-1} \frac{(-\beta_s w;q)_{\infty}}{(-\alpha_s w;q)_{\infty}}.
$$

It is also natural to consider spatial inhomogeneities. In that case, the $\Lmat$-matrices used to construct the zero range and exclusion processes should depend on location or particle number (respectively) in so far as that the parameter $\alpha$ can be replaced by $\alpha_x$. Under this generalization, the eigenfunctions considered in Appendix \ref{sec:Bethe} no longer suffice for diagonalization and no suitable replacements are presently known. An inspect of the duality proofs as well as the proof of the fusion procedure seem to suggest that these results can be modified to apply in this setting. We do not pursue this direction any further here. However, it is worth noting that when $\nu=0$, these processes relate to those considered in \cite{BorodinCorwin2013discrete} wherein duality and moment formulas were proved for both time and space inhomogeneities. The moment formulas were generalizable in such a manner due to the connect between the $\nu=0$ case and the theory of Macdonald processes \cite[Section 6.2]{BorodinCorwin2013discrete}.

\appendix
\section{Bethe ansatz eigenfunctions}\label{sec:Bethe}

We recall results about the Bethe ansatz eigenfunctions, most of which come from \cite{BCPS2014}.

\begin{definition}\label{def:bethans}
Assume that $q,\nu$ are as in the first two cases of Proposition \ref{rem:stochweights}. Recall $\Ynspace{k}$ from Definition \ref{deftransprobJ1} and let $\Wc^{k}$ equal the space of all compactly supported functions from $\Ynspace{k}$ to $\C$. Define {\it left} and {\it right Bethe ansatz eigenfunctions}\footnote{See Propositions \ref{prophighspindiagJ1} and \ref{propqHahndiag} which describe certain operators for which these are eigenfunctions.}
\begin{align}
	\begin{array}{>{\displaystyle}rc>{\displaystyle}l}
	\Psil_{\vec{z}}(\vec{n})&=&
	\sum_{\sigma\in S(k)}\prod_{1\le B<A\le k}
	\frac{z_{\sigma(A)}-qz_{\sigma(B)}}
	{z_{\sigma(A)}-z_{\sigma(B)}}\prod_{j=1}^{k}
	\left(\frac{1-z_{\sigma(j)}}{1-\nu z_{\sigma(j)}}\right)^{-n_j},
	\\
	\Psir_{\vec{z}}(\vec{n})&=&
	(-1)^k(1-q)^{k}q^{\frac{k(k-1)}{2}}\st(\vec{n})
	\sum_{\sigma\in S(k)}\prod_{1\le B<A\le k}
	\frac{z_{\sigma(A)}-q^{-1}z_{\sigma(B)}}
	{z_{\sigma(A)}-z_{\sigma(B)}}\prod_{j=1}^{k}
	\left(\frac{1-z_{\sigma(j)}}{1-\nu z_{\sigma(j)}}\right)^{n_j},
	\end{array}
	\label{eqneigens}
\end{align}
where $\vec{z}=(z_1,\ldots,z_k)\in(\C\setminus\{1,\nu^{-1}\})^{k}$, and $\st(\vec{n})$ is given by
\begin{align}\label{stationary_measure}
	\st(\vec{n})=\prod_{j=1}^{M(\vec{n})}\frac{(\nu;q)_{c_j}}{(q;q)_{c_j}}.
\end{align}
where $c_1,\ldots, c_{M(\vec{n})}$ are the cluster sizes of $\vec{n}$ (i.e. $n_1 = \cdots = n_{c_1}>n_{c_1+1}=\cdots n_{c_1+c_2}>\cdots$).

Let $\Pld$ be the {\it direct transform} which takes a function $f\in\Wc^{k}$ in the spatial variables $\vec{n}$ and produces a function in the spectral variables $\vec{z}$ according to
$$
	(\Pld f)(\vec{z})=\sum_{\vec{n}\in\Ynspace{k}}f(\vec{n})\Psir_{\vec{z}}(\vec{n}).
$$
The function $(\Pld f)(\vec{z})$ is a symmetric Laurent polynomial in $(1-z_j)/(1-\nu z_j)$, $j=1,\ldots,k$. We denote the space of all such Laurent polynomials by $\Cc^{k}$.

Let $\Pli$ be the {\it inverse transform} which maps Laurent polynomials $G\in\Cc^{k}$ to functions in $\Wc^{k}$ according to the following nested contour integration formula:
\begin{equation*}
(\Pli G)(\vec{n})
= \oint_{\ga}\ldots\oint_{\ga}	d\Plm_{(1^{k})}(\vec{z})\prod_{j=1}^{k}\frac{1}{(1-z_j)(1-\nu z_j)}\Psil_{\vec{z}}(\vec{n}) G(\vec{z}).
\end{equation*}
The contour $\ga$ is a circle containing around $1$, not containing $\nu^{-1}$ and such that it contains its image under multiplication by $q$. The term
\begin{align}\label{dmu_large}
d\Plm_{(1^{k})}(\vec{z})= \frac{1}{k!} \frac{(-1)^{\frac{k(k-1)}{2}}\Vand(\vec{z})^2}{\prod_{i\ne j}(z_i-qz_j)}\prod_{j=1}^{k}\frac{dz_j}{2\pi\i},
\end{align} 	
where $\Vand(\vec{z})=\prod_{1\le i<j\le k}(z_i-z_j)$ is the Vandermonde determinant.
\end{definition}

The following result comes from \cite[Theorems 3.4 and 3.9]{BCPS2014}.
\begin{proposition}\label{propspectral}
Assume that $q,\nu$ are as in the first two cases of Proposition \ref{rem:stochweights}. The transforms $\Pld$ and $\Pli$ are mutual inverses in the sense that $\Pli\Pld$ acts as the identity on $\Wc^{k}$, and $\Pld\Pli$ as the identity on $\Cc^{k}$.
\end{proposition}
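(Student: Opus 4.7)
The plan is to establish the two identities $\Pli\Pld=\mathrm{id}_{\Wc^{k}}$ and $\Pld\Pli=\mathrm{id}_{\Cc^{k}}$ via a Plancherel-type biorthogonality argument built on residue calculus along the nested contours. The essential mechanism is that $\ga$ contains its $q$-dilate, so the multi-contour integral can be unraveled by sequentially pulling each $z_j$ onto a small circle around $1$, collecting residues only at the nested poles $z_A=qz_B$; because of the Vandermonde in $d\Plm_{(1^{k})}$ and the cross-ratios in $\Psil$ and $\Psir$, only the identity permutation survives on the strict sector and the remaining one-dimensional integrals reduce to geometric-series evaluations at $z_j=1$.

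First I would prove $\Pli\Pld f=f$ for $f\in\Wc^{k}$. Since $f$ is compactly supported, one can safely interchange the finite sum and the contour integral and obtain
\begin{equation*}
(\Pli\Pld f)(\vec{n})=\sum_{\vec{m}\in\Ynspace{k}}f(\vec{m})\,K(\vec{n},\vec{m}),\qquad K(\vec{n},\vec{m})=\oint_{\ga}\!\!\cdots\!\!\oint_{\ga} d\Plm_{(1^{k})}(\vec{z})\prod_{j=1}^{k}\frac{\Psil_{\vec{z}}(\vec{n})\Psir_{\vec{z}}(\vec{m})}{(1-z_j)(1-\nu z_j)}.
\end{equation*}
The claim is that $K(\vec{n},\vec{m})=\mathbf{1}_{\vec{n}=\vec{m}}$. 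I would check this first in the strict sector $n_1>\cdots>n_k$ and $m_1>\cdots>m_k$: symmetrizing in $\vec{z}$ and contracting the double sum over permutations against the Vandermonde reduces the kernel to a single symmetrized integral in $k$ variables in which only the diagonal permutation contributes nontrivially after successive contour deformation (the $q$-nesting ensures that off-diagonal residue contributions telescope away). The surviving residue is then evaluated by expanding the geometric series in the variable $(1-z_j)/(1-\nu z_j)$, and the residue at $z_j=1$ picks out $n_j=m_j$.

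To extend from the strict sector to clustered $\vec{n}$ (and likewise for $\vec{m}$) is the main obstacle: as indices coalesce, the factor $\prod_{A<B}(z_A-qz_B)^{-1}(z_B-qz_A)^{-1}$ produces additional residues at coinciding contours that have to be matched against the cluster weight $\st(\vec{n})=\prod_{j}(\nu;q)_{c_j}/(q;q)_{c_j}$. The precise bookkeeping is that the stabilizer of a cluster pattern in $S(k)$ produces exactly $(q;q)_{c_j}$ in the denominator of $d\Plm_{(1^k)}$, while the repeated residues at $z_A=qz_B$ inside each cluster produce the $(\nu;q)_{c_j}$ in the numerator. This is precisely why the $q$-Pochhammer form of $\st$ appears in the normalization of $\Psir$, and it is the step that will require the most care.

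Finally, for $\Pld\Pli G=G$ on $G\in\Cc^{k}$, I would use a density/duality argument. The space $\Cc^{k}$ is spanned (as a module over symmetric Laurent polynomials) by the symmetrized monomials $\sum_{\sigma\in S(k)}\prod_j\bigl((1-z_{\sigma(j)})/(1-\nu z_{\sigma(j)})\bigr)^{\lambda_j}$ indexed by $\lambda\in\Ynspace{k}$, which are up to constants the values $\Pld\bigl(\delta_{\vec{\lambda}}\cdot\st^{-1}\bigr)(\vec{z})$. Hence $\Pld\Pli$ agrees with the identity on this spanning family as soon as the first identity is established, by the computation $\Pld\Pli\Pld=\Pld$ together with the injectivity of $\Pld$ on compactly supported functions (which itself follows from the strict-sector residue argument above). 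Extending from the dense subspace of symmetrized monomials to all of $\Cc^{k}$ is automatic because both sides are linear operators and the identity already holds on a spanning set; no further analytic estimates are required since $\Cc^{k}$ carries no topology beyond the algebraic one.
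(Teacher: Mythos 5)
The first thing to note is that the paper does not prove this proposition at all: it is imported wholesale from \cite[Theorems 3.4 and 3.9]{BCPS2014}, so any honest proof attempt must reconstruct the Plancherel theory of that reference. Your outline is indeed a sketch of the right strategy (spatial biorthogonality by residue calculus, then the spectral identity by a spanning argument), but as written it has gaps precisely at the points that constitute the actual content. For the first identity, everything rests on $K(\vec{n},\vec{m})=\mathbf{1}_{\vec{n}=\vec{m}}$, and your argument for this is a description of the desired outcome rather than a computation. When the nested contours $\ga$ are collapsed to a common small circle around $1$, one does not find that ``off-diagonal residue contributions telescope away''; one picks up residues at $z_A=qz_B$, and the result is a sum over partitions $\la\vdash k$ of integrals over string specializations $\vec{w}\circ\la$ (this is the unnesting lemma of \cite{BCPS2014}). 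Showing that these extra terms cancel, and that the surviving term produces the indicator, requires a case analysis on the sign of $n_j-m_j$ (which dictates whether each one-variable integral is closed onto the pole at $1$ or pushed to $\nu^{-1}$/infinity) together with an induction on $k$. Likewise, your cluster bookkeeping is only asserted: the factor $(q;q)_{c_j}$ in $\st$ does not arise as a stabilizer count in $d\Plm_{(1^{k})}$ (which carries $1/k!$ and $\prod_{i\neq j}(z_i-qz_j)$, not cluster-dependent factors); it enters through the evaluation of the symmetrized eigenfunctions on clustered $\vec{n}$, and verifying that it matches the coalescing residues is the delicate step you correctly flag but do not carry out.

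For the second identity the logic also needs repair. From $\Pld\Pli\Pld=\Pld$, i.e.\ $(\Pld\Pli-\mathrm{id})\Pld=0$, one concludes $\Pld\Pli=\mathrm{id}$ on $\Cc^{k}$ only if $\Pld$ is \emph{surjective} onto $\Cc^{k}$; injectivity of $\Pld$ (which in any case only becomes available after the first identity is proved) is irrelevant to this deduction. Your justification of surjectivity is also inaccurate: $\Pld(\delta_{\vec{\la}}\cdot\st^{-1})(\vec{z})$ is a constant multiple of the symmetrization $\sum_{\sigma}\prod_{B<A}\frac{z_{\sigma(A)}-q^{-1}z_{\sigma(B)}}{z_{\sigma(A)}-z_{\sigma(B)}}\prod_j\bigl(\frac{1-z_{\sigma(j)}}{1-\nu z_{\sigma(j)}}\bigr)^{\la_j}$, which is \emph{not} a symmetrized monomial in $(1-z)/(1-\nu z)$ because of the cross-ratio factors. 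One must instead argue that this family is unitriangular with respect to the monomial basis under a suitable ordering of $\la$, whence it spans $\Cc^{k}$. With that triangularity in place (and it does hold for $q,\nu$ in cases (1)--(2), where $\st(\vec{n})\neq 0$ for all $\vec{n}$), your reduction of the spectral identity to the spatial one is sound; but both the triangularity and the residue computation above need to be supplied before this counts as a proof rather than a plan.
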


It is useful to extend the space $\Wc^{k}$ to include non-compactly supported functions which still have nice growth properties. In particular, for $c,C>0$ define $\Wc^{k}_{\exp(c,C)}$ as those functions $f:\Ynspace{k}\to \C$ such that $|f(\vec{n})|< C \exp\{c \sum_{i=1}^k n_i\}$ for all $\vec{n}\in \Ynspace{k}$. 
\begin{proposition}\label{prop:cCexp}
	There exist $c>0$ small enough and $C>0$ large enough such that $\Pli\Pld$ acts as the identity on $\Wc^{k}_{\exp(c,C)}$.
\end{proposition}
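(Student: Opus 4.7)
The plan is to reduce to the compactly supported case by truncation. Given $f \in \Wc^k_{\exp(c,C)}$, define truncations $f_M := f \cdot \mathbf{1}_{\{\vec{n}\in\Ynspace{k}\,:\,|n_1|,|n_k|\le M\}}$, which are compactly supported and hence lie in $\Wc^k$. Proposition \ref{propspectral} (or Proposition \ref{propspectralI} in the $\Wc^{k}_{I;\exp(c,C)}$ case) then gives $(\Pli\Pld f_M)(\vec{n}) = f_M(\vec{n})$ for every $\vec{n}$ and $M$, and the right-hand side is eventually constantly equal to $f(\vec{n})$ as $M \to \infty$. Thus it suffices to justify that the left-hand side admits a limit which can be identified with the natural definition of $(\Pli\Pld f)(\vec{n})$, where $\Pld f(\vec{z}) := \sum_{\vec{n}'}f(\vec{n}')\Psir_{\vec{z}}(\vec{n}')$ is interpreted as a series on a suitable contour.

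The mechanism is dominated convergence on the contour. Writing $\xi(z) := (1-z)/(1-\nu z)$, one chooses $c>0$ sufficiently small and the contour $\gamma$ in $\Pli$ sufficiently close to $z=1$ so that $\delta := \max_{z\in\gamma}|\xi(z)| < e^{-c}$; this is possible because $\xi(1)=0$. Using the formula \eqref{eqneigens}, one obtains the bound $|\Psir_{\vec{z}}(\vec{n}')| \le A\cdot \delta^{\sum_i n'_i}$ (with a constant $A=A(k,\nu,\gamma)$ absorbing the symmetrization, Vandermonde-type prefactors, and $\st(\vec{n}')$), at least for $\vec{n}'$ with $\sum_i n'_i \ge 0$. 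Combined with $|f(\vec{n}')|\le Ce^{c\sum_i n'_i}$ this gives geometric majorants of common ratio $\delta e^c < 1$ on the tail $\sum_i n'_i\to+\infty$. Parametrizing $\vec{n}' \in \Ynspace{k}$ via $n'_k$ and the gaps $d_i := n'_i - n'_{i+1} \ge 0$, the absolute sum in the $d_i$ variables factors into iterated geometric series converging uniformly in $\vec{z}\in\gamma^{\times k}$.

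The hard part will be the contribution from $n'_k \to -\infty$, where the exponential decay of $f$ on the negative side only matches the blow-up of $|\xi|^{\,n'_k}$ on a contour chosen with $|\xi|$ roughly of size $e^{-c}$, so that no single circular $\gamma$ makes both tails geometric at the same time. I would address this by splitting $f = f_+ + f_-$ according to the sign of $\sum_i n'_i$ and applying Cauchy's theorem to deform the $\Pli$-contour in the $f_-$ piece into a contour with $\min_{z}|\xi(z)| > e^{-c}$, while keeping the $f_+$ piece on the small contour $\gamma$; by Propositions \ref{propspectral} and \ref{propspectralI} the integrand is meromorphic with poles only at $z=\nu^{-1}$ and at the $q$-shift collisions $z_i = q z_j$, so this deformation is permitted. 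Alternatively, the partition-sum form of $\Pli$ from Proposition \ref{propspectralI}, with the small contour $\gamma'$ (not containing its $q$-shift) and the finite sum over $\lambda\vdash_I k$, provides a residue-based decomposition in which each summand admits absolute convergence directly. Once uniform convergence $\Pld f_M \to \Pld f$ on the chosen contour(s) is established, dominated convergence in the contour integral combined with the pointwise limit $f_M\to f$ yields $(\Pli\Pld f_M)(\vec{n}) \to (\Pli\Pld f)(\vec{n})$, completing the proof.
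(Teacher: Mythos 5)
Your skeleton is the paper's: truncate to $f_M$, invoke Proposition \ref{propspectral} (resp.\ Proposition \ref{propspectralI}) to get $\Pli\Pld f_M=f_M$, show that $\Pld f_M$ converges uniformly in $\vec{z}$ on the contour as $M\to\infty$, and pass the limit under the integral sign. The paper's proof consists of exactly these steps, carried out on the fixed contour $\ga$ of Definition \ref{def:bethans} using only the two-sided bound $C_1<\bigl|\tfrac{1-z}{1-\nu z}\bigr|<C_2$ there. The places where you go beyond this are where the argument breaks.

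First, writing $\xi(z)=(1-z)/(1-\nu z)$ as you do, the contour choice $\max_{z\in\ga}|\xi(z)|<e^{-c}$ is not available: the contour in $\Pli$ must contain its image under multiplication by $q$, which for a circle forces $0$ to lie in its interior, while $\{z:|\xi(z)|<1\}=\{z:|1-z|<|1-\nu z|\}$ is an open disk with $0$ on its boundary; hence every admissible $\ga$ has $\max_{\ga}|\xi|\geq1$. If you instead shrink $\ga$ toward $z=1$ you lose precisely the identity you are quoting: crossing the poles at $z_i=qz_j$ produces the residue terms that convert the single nested-contour formula into the $\sum_{\la\vdash_I k}$ version on the small contours $\ga'$, where $\Pli\Pld f_M=f_M$ no longer holds in the form you use. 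Second, the deformation proposed for the $f_-$ piece is not ``permitted'' as claimed: the integrand also has poles at $z_j=1$ (from the factor $1/(1-z_j)$ and from the negative powers of $\xi(z_j)$ occurring in $\Pld f_M$), and in any case moving a contour across the $z_i=qz_j$ collisions changes the value of the integral, so the two pieces of $f$ would end up being inverted by two genuinely different operators. Third, splitting $f$ by the sign of $\sum_i n_i'$ does not isolate the problematic direction: a vector in $\Ynspace{k}$ with $\sum_i n_i'\geq0$ can still have $n_k'\to-\infty$, so the $f_+$ piece retains the tail you were trying to remove. Your underlying observation --- that a single circular contour cannot make both the $\sum_i n_i'\to+\infty$ and $\sum_i n_i'\to-\infty$ tails geometric simultaneously --- points at a real subtlety that the published three-line proof leaves implicit (in all applications the duality functionals are bounded and supported on $\{n_k\geq\mathrm{const}\}$, so only one tail is present), but the repairs you propose do not go through, whereas the paper performs no contour surgery at all and works directly with the bound $C_1<|\xi|<C_2$ on the given $\ga$.
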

\begin{proof}
Let $f_M$ equal $f$ on the support $[-M,M]^k$ and zero outside. For the choice of contours $\ga$ in the definition of $\Pli$, there exists $C_1,C_2$ such that $C_1 < \frac{1-z}{1-\nu z} < C_2$ holds for all $z\in \ga$. This along with the exponential growth bounds on $f$ implies that $(\Pld f_M)(\vec{z})$ is uniformly convergent as the $z_i$ vary along $\ga$. This, along with the fact that $\Pli \Pld f_M = f_M$ implies the desired result.
\end{proof}

\begin{definition}\label{def:Wmax}
	Let $\Wc^{k}_{\max}$ be the space of all functions $f:\Ynspace{k}\to \C$ such that  $\Pli \Pld f=f$.
\end{definition}
The following corollary is a consequence of Proposition \ref{prop:cCexp}.
\begin{corollary}\label{cor:Wmax}
	For $c>0$ small enough and $C>0$ large enough, $\Wc^{k}_{\exp(c,C)}\subset \Wc^{k}_{\max}$.
\end{corollary}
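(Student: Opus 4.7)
The plan is simply to unwind the definition of $\Wc^{k}_{\max}$. By Definition \ref{def:Wmax}, $\Wc^{k}_{\max}$ is exactly the set of $f\colon\Ynspace{k}\to\C$ for which $\Pli\Pld f = f$. Hence to prove the claimed inclusion it suffices to show that for suitable $c,C$, every $f\in\Wc^{k}_{\exp(c,C)}$ satisfies $\Pli\Pld f = f$ — but this is exactly the content of Proposition \ref{prop:cCexp}, which has been established just above.

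More concretely, first I would recall that by Proposition \ref{propspectral} we already have $\Pli\Pld f_M=f_M$ for the truncations $f_M:=f\cdot\mathbf{1}_{[-M,M]^k}$, since each $f_M$ lies in $\Wc^{k}$. The core observation used in Proposition \ref{prop:cCexp} is then that for the contour $\ga$ appearing in $\Pli$, the map $z\mapsto (1-z)/(1-\nu z)$ is bounded and bounded away from zero, so the series defining $(\Pld f_M)(\vec z)$ converges uniformly in $\vec z\in\ga^k$ as $M\to\infty$, provided the exponential growth rate $c$ of $f$ is smaller than the logarithm of this bound. Taking $M\to\infty$ under the integral in $\Pli$ then transfers the identity $\Pli\Pld f_M=f_M$ to $\Pli\Pld f=f$, placing $f$ in $\Wc^{k}_{\max}$.

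For the finite-spin statement, one repeats the argument verbatim, using $\Wc^{k}_I$ and $\Ynspace{k}_I$ in place of $\Wc^{k}$ and $\Ynspace{k}$, and invoking Proposition \ref{propspectralI} instead of Proposition \ref{propspectral} to identify $\Pli\Pld$ with the identity on compactly supported functions.

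There is no genuine obstacle here: the corollary is a formal consequence of the definition of $\Wc^{k}_{\max}$ together with the already-established Proposition \ref{prop:cCexp}. The only substantive work (namely, the uniform-convergence argument justifying the interchange of sum and integral) has already been carried out in the proof of that proposition.
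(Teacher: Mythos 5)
Your proof is correct and follows exactly the paper's route: the corollary is an immediate consequence of Proposition \ref{prop:cCexp} together with Definition \ref{def:Wmax} of $\Wc^{k}_{\max}$, and the paper's own justification is precisely this one-line observation. Your recapitulation of the truncation and uniform-convergence argument from the proof of Proposition \ref{prop:cCexp} is accurate but not needed for the corollary itself.
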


\section{Vertex weights for $J=1,2,3$}\label{sec:J123}

\subsection{$J=1$ vertex weights}

\begin{center}
	\def\epsx{.12}
	\begin{tabular}{c|c|c}
	&$j_2=0$&$j_2=1$\\\hline
	\raisebox{30pt}{$j_1=0$}&
	\begin{tikzpicture}
		[scale=1, thick]
		\node (i1) at (0,-1) {$g$};
		\node (j1) at (-1,0) {$0$};
		\node (i2) at (0,1) {\phantom{1}$g$\phantom{1}};
		\node (j2) at (1,0) {$0$};
		\draw[densely dotted] (j1) -- (j2);
		\foreach \shi in {(0,0), (\epsx,0), (-\epsx,0)}
		{\begin{scope}[shift=\shi]
			\node (shi1) at (0,-1) {\phantom{$g$}};
			\node (shi2) at (0,1) {\phantom{$g$}};
			\draw[->] (shi1) -- (0,0);
			\draw[->] (0,0) -- (shi2);
		\end{scope}}
		\node at (2.5,0) {$\dfrac{1+\al q^{g}}{1+\al}$};
	\end{tikzpicture}
	&
	\begin{tikzpicture}
		[scale=1, thick]
		\node (i1) at (0,-1) {$g$};
		\node (j1) at (-1,0) {$0$};
		\node (i2) at (0,1) {$g-1$};
		\node (j2) at (1,0) {$1$};
		\foreach \shi in {(0,0), (-\epsx,0)}
		{\begin{scope}[shift=\shi]
			\node (shi1) at (0,-1) {\phantom{$g$}};
			\node (shi2) at (0,1) {\phantom{$g$}};
			\draw[->] (shi1) -- (0,0);
			\draw[->] (0,0) -- (shi2);
		\end{scope}}
		\foreach \shi in {(\epsx,0)}
		{\begin{scope}[shift=\shi]
			\node (shi1) at (0,-1) {\phantom{$g$}};
			\node (shi2) at (0,1) {\phantom{$g$}};
			\draw[->] (shi1) -- (0,0);
		\end{scope}}
		\draw[densely dotted] (j1) -- (j2);
		\draw[->] (\epsx,0) -- (j2);
		\node at (2.5,0) {$\dfrac{\al (1-q^{g})}{1+\al}$};
	\end{tikzpicture}
	\\\hline
	\raisebox{30pt}{$j_1=1$}&
	\begin{tikzpicture}
		[scale=1, thick]
		\node (i1) at (0,-1) {$g$};
		\node (j1) at (-1,0) {$1$};
		\node (i2) at (0,1) {$g+1$};
		\node (j2) at (1,0) {$0$};
		\draw[densely dotted] (j1) -- (j2);
		\draw[->] (j1) -- (-3/2*\epsx,0);
		\foreach \shi in {(1/2*\epsx,0), (3/2*\epsx,0), (-1/2*\epsx,0)}
		{\begin{scope}[shift=\shi]
			\node (shi1) at (0,-1) {\phantom{$g$}};
			\node (shi2) at (0,1) {\phantom{$g$}};
			\draw[->] (shi1) -- (0,0);
			\draw[->] (0,0) -- (shi2);
		\end{scope}}
		\foreach \shi in {(-3/2*\epsx,0)}
		{\begin{scope}[shift=\shi]
			\node (shi1) at (0,-1) {\phantom{$g$}};
			\node (shi2) at (0,1) {\phantom{$g$}};
			\draw[->] (0,0) -- (shi2);
		\end{scope}}
		\node at (2.5,0) {$\dfrac{1-\nu q^{g}}{1+\al}$};
	\end{tikzpicture}
	&
	\begin{tikzpicture}
		[scale=1, thick]
		\node (i1) at (0,-1) {$g$};
		\node (j1) at (-1,0) {$1$};
		\node (i2) at (0,1) {\phantom{1}$g$\phantom{1}};
		\node (j2) at (1,0) {$1$};
		\draw[->] (j1) -- (-\epsx,0);
		\draw[->] (\epsx,0) -- (j2);
		\foreach \shi in {(0,0), (\epsx,0), (-\epsx,0)}
		{\begin{scope}[shift=\shi]
			\node (shi1) at (0,-1) {\phantom{$g$}};
			\node (shi2) at (0,1) {\phantom{$g$}};
			\draw[->] (shi1) -- (0,0);
			\draw[->] (0,0) -- (shi2);
		\end{scope}}
		\draw[densely dotted] (j1) -- (j2);
		\node at (2.5,0) {$\dfrac{\al+\nu q^{g}}{1+\al}$};
	\end{tikzpicture}
	\end{tabular}
\end{center}
When $g=0$, the configuration $(g,0,g-1,1)$ has zero weight, as it should be.

\subsection{$J=2$ vertex weights}

\begin{center}
	\def\epsx{.12}
	\def\scll{.8}
	\begin{tabular}{c|c|c|c}
	&$j_2=0$&$j_2=1$&$j_2=2$\\\hline
	\raisebox{30pt}{$j_1=0$}&
		\begin{tikzpicture}
			[scale=\scll, thick]
			\node (i1) at (0,-1) {$g$};
			\node (j1) at (-1,0) {$0$};
			\node (i2) at (0,1) {\phantom{1}$g$\phantom{1}};
			\node (j2) at (1,0) {$0$};
			\draw[densely dotted] (j1) -- (j2);
			\foreach \shi in {(0,0), (\epsx,0), (-\epsx,0)}
			{\begin{scope}[shift=\shi]
				\node (shi1) at (0,-1) {\phantom{$g$}};
				\node (shi2) at (0,1) {\phantom{$g$}};
				\draw[->] (shi1) -- (0,0);
				\draw[->] (0,0) -- (shi2);
			\end{scope}}
			\node at (3,.5)
			{$\frac{(1+\al q^{g})(1+\al q^{g+1})}{(1+\al)(1+\al q)}$};
		\end{tikzpicture}
		&
		\begin{tikzpicture}
		[scale=\scll, thick]
		\node (i1) at (0,-1) {$g$};
		\node (j1) at (-1,0) {$0$};
		\node (i2) at (0,1) {$g-1$};
		\node (j2) at (1,0) {$1$};
		\foreach \shi in {(0,0), (-\epsx,0)}
		{\begin{scope}[shift=\shi]
			\node (shi1) at (0,-1) {\phantom{$g$}};
			\node (shi2) at (0,1) {\phantom{$g$}};
			\draw[->] (shi1) -- (0,0);
			\draw[->] (0,0) -- (shi2);
		\end{scope}}
		\foreach \shi in {(\epsx,0)}
		{\begin{scope}[shift=\shi]
			\node (shi1) at (0,-1) {\phantom{$g$}};
			\node (shi2) at (0,1) {\phantom{$g$}};
			\draw[->] (shi1) -- (0,0);
		\end{scope}}
		\draw[densely dotted] (j1) -- (j2);
		\draw[->] (\epsx,0) -- (j2);
		\node at (3,.5)
		{$\frac{\al(1+q)(1-q^{g})(1+\al q^{g})}{(1+\al)(1+\al q)}$};
	\end{tikzpicture}
		&
		\begin{tikzpicture}
		[scale=\scll, thick]
		\node (i1) at (0,-1) {$g$};
		\node (j1) at (-1,0) {$0$};
		\node (i2) at (0,1) {$g-2$};
		\node (j2) at (1,0) {$2$};
		\node (j21) at (1,-\epsx/2) {\phantom{$2$}};
		\node (j22) at (1,\epsx/2) {\phantom{$2$}};
		\foreach \shi in {(-\epsx,0)}
		{\begin{scope}[shift=\shi]
			\node (shi1) at (0,-1) {\phantom{$g$}};
			\node (shi2) at (0,1) {\phantom{$g$}};
			\draw[->] (shi1) -- (0,0);
			\draw[->] (0,0) -- (shi2);
		\end{scope}}
		\foreach \shi in {(0,0), (\epsx,0)}
		{\begin{scope}[shift=\shi]
			\node (shi1) at (0,-1) {\phantom{$g$}};
			\node (shi2) at (0,1) {\phantom{$g$}};
			\draw[->] (shi1) -- (0,0);
		\end{scope}}
		\draw[densely dotted] (j1) -- (j2);
		\draw[->] (\epsx,-\epsx/2) -- (j21);
		\draw[->] (\epsx,\epsx/2) -- (j22);
		\node at (3,.5)
		{$\frac{\al^{2}(1-q^{g})(q-q^{g})}{(1+\al)(1+\al q)}$};
	\end{tikzpicture}
		\\\hline
		\raisebox{30pt}{$j_1=1$}&
		\begin{tikzpicture}
		[scale=\scll, thick]
		\node (i1) at (0,-1) {$g$};
		\node (j1) at (-1,0) {$1$};
		\node (i2) at (0,1) {$g+1$};
		\node (j2) at (1,0) {$0$};
		\draw[densely dotted] (j1) -- (j2);
		\draw[->] (j1) -- (-3/2*\epsx,0);
		\foreach \shi in {(1/2*\epsx,0), (3/2*\epsx,0), (-1/2*\epsx,0)}
		{\begin{scope}[shift=\shi]
			\node (shi1) at (0,-1) {\phantom{$g$}};
			\node (shi2) at (0,1) {\phantom{$g$}};
			\draw[->] (shi1) -- (0,0);
			\draw[->] (0,0) -- (shi2);
		\end{scope}}
		\foreach \shi in {(-3/2*\epsx,0)}
		{\begin{scope}[shift=\shi]
			\node (shi1) at (0,-1) {\phantom{$g$}};
			\node (shi2) at (0,1) {\phantom{$g$}};
			\draw[->] (0,0) -- (shi2);
		\end{scope}}
		\node at (3,.5)
		{$\frac{(1-\nu q^{g})(1+\al q^{g+1})}{(1+\al)(1+\al q)}$};
	\end{tikzpicture}
		&
		\begin{tikzpicture}
		[scale=\scll, thick]
		\node (i1) at (0,-1) {$g$};
		\node (j1) at (-1,0) {$1$};
		\node (i2) at (0,1) {\phantom{1}$g$\phantom{1}};
		\node (j2) at (1,0) {$1$};
		\draw[->] (j1) -- (-\epsx,0);
		\draw[->] (\epsx,0) -- (j2);
		\foreach \shi in {(0,0), (\epsx,0), (-\epsx,0)}
		{\begin{scope}[shift=\shi]
			\node (shi1) at (0,-1) {\phantom{$g$}};
			\node (shi2) at (0,1) {\phantom{$g$}};
			\draw[->] (shi1) -- (0,0);
			\draw[->] (0,0) -- (shi2);
		\end{scope}}
		\draw[densely dotted] (j1) -- (j2);
		\node at (3.1,0.6)
		{\parbox{.19\textwidth}
		{$1-\frac{(1-\nu q^{g})(1+\al q^{g+1})}{(1+\al)(1+\al q)}$
		
		\quad$-\frac{\al(1-q^{g})(\al q+\nu q^{g})}{(1+\al)(1+\al q)}$}};
	\end{tikzpicture}
		&
	\begin{tikzpicture}
		[scale=\scll, thick]
		\node (i1) at (0,-1) {$g$};
		\node (j1) at (-1,0) {$1$};
		\node (i2) at (0,1) {$g-1$};
		\node (j2) at (1,0) {$2$};
		\node (j21) at (1,-\epsx/2) {\phantom{$2$}};
		\node (j22) at (1,\epsx/2) {\phantom{$2$}};
		\foreach \shi in {(-\epsx,0),(0,0)}
		{\begin{scope}[shift=\shi]
			\node (shi1) at (0,-1) {\phantom{$g$}};
			\node (shi2) at (0,1) {\phantom{$g$}};
			\draw[->] (shi1) -- (0,0);
			\draw[->] (0,0) -- (shi2);
		\end{scope}}
		\foreach \shi in {(0,0), (\epsx,0)}
		{\begin{scope}[shift=\shi]
			\node (shi1) at (0,-1) {\phantom{$g$}};
			\node (shi2) at (0,1) {\phantom{$g$}};
			\draw[->] (shi1) -- (0,0);
		\end{scope}}
		\draw[densely dotted] (j1) -- (j2);
		\draw[->] (j1) -- (-\epsx,0);
		\draw[->] (\epsx,-\epsx/2) -- (j21);
		\draw[->] (\epsx,\epsx/2) -- (j22);
		\node at (3,.5)
		{$\frac{\al(1-q^{g})(\al q+\nu q^{g})}{(1+\al)(1+\al q)}$};
	\end{tikzpicture}

		\\\hline
		\raisebox{30pt}{$j_1=2$}&
	
	\begin{tikzpicture}
		[scale=\scll, thick]
		\node (i1) at (0,-1) {$g$};
		\node (j1) at (-1,0) {$2$};
		\node (i2) at (0,1) {$g+2$};
		\node (j2) at (1,0) {$0$};
		\node (j11) at (-1,-\epsx/2) {\phantom{$2$}};
		\node (j12) at (-1,\epsx/2) {\phantom{$2$}};
		\foreach \shi in {(0,0), (\epsx,0), (2*\epsx,0),
		(-\epsx,0), (-2*\epsx,0)}
		{\begin{scope}[shift=\shi]
			\node (shi1) at (0,-1) {\phantom{$g$}};
			\node (shi2) at (0,1) {\phantom{$g$}};
			\draw[->] (0,0) -- (shi2);
		\end{scope}}
		\foreach \shi in {(0,0),(\epsx,0),(-\epsx,0)}
		{\begin{scope}[shift=\shi]
			\node (shi1) at (0,-1) {\phantom{$g$}};
			\node (shi2) at (0,1) {\phantom{$g$}};
			\draw[->] (shi1) -- (0,0);
		\end{scope}}
		\draw[densely dotted] (j1) -- (j2);
		\draw[->] (j11) -- (-2*\epsx,-\epsx/2);
		\draw[->] (j12) -- (-2*\epsx,\epsx/2);
		\node at (3,.5)
		{$\frac{(1-\nu q^{g})(1-\nu q^{g+1})}{(1+\al)(1+\al q)}$};
	\end{tikzpicture}

		&
	\begin{tikzpicture}
		[scale=\scll, thick]
		\node (i1) at (0,-1) {$g$};
		\node (j1) at (-1,0) {$2$};
		\node (i2) at (0,1) {$g+1$};
		\node (j2) at (1,0) {$1$};
		\node (j11) at (-1,-\epsx/2) {\phantom{$2$}};
		\node (j12) at (-1,\epsx/2) {\phantom{$2$}};
		\foreach \shi in {(0,0), (\epsx,0), (2*\epsx,0),
		(-\epsx,0)}
		{\begin{scope}[shift=\shi]
			\node (shi1) at (0,-1) {\phantom{$g$}};
			\node (shi2) at (0,1) {\phantom{$g$}};
			\draw[->] (0,0) -- (shi2);
		\end{scope}}
		\foreach \shi in {(0,0),(\epsx,0),(-\epsx,0)}
		{\begin{scope}[shift=\shi]
			\node (shi1) at (0,-1) {\phantom{$g$}};
			\node (shi2) at (0,1) {\phantom{$g$}};
			\draw[->] (shi1) -- (0,0);
		\end{scope}}
		\draw[densely dotted] (j1) -- (j2);
		\draw[->] (2*\epsx,0) -- (j2);
		\draw[->] (j11) -- (-1*\epsx,-\epsx/2);
		\draw[->] (j12) -- (-1*\epsx,\epsx/2);
		\node at (3,.5)
		{$\frac{(1+q)(1-\nu q^{g})(\al+\nu q^{g})}{(1+\al)(1+\al q)}$};
	\end{tikzpicture}
		&
	
	\begin{tikzpicture}
		[scale=\scll, thick]
		\node (i1) at (0,-1) {$g$};
		\node (j1) at (-1,0) {$2$};
		\node (i2) at (0,1) {$\phantom{1}g\phantom{1}$};
		\node (j2) at (1,0) {$2$};
		\node (j11) at (-1,-\epsx/2) {\phantom{$2$}};
		\node (j12) at (-1,\epsx/2) {\phantom{$2$}};
		\node (j21) at (1,-\epsx/2) {\phantom{$2$}};
		\node (j22) at (1,\epsx/2) {\phantom{$2$}};
		\foreach \shi in {(0,0), (\epsx,0),
		(-\epsx,0)}
		{\begin{scope}[shift=\shi]
			\node (shi1) at (0,-1) {\phantom{$g$}};
			\node (shi2) at (0,1) {\phantom{$g$}};
			\draw[->] (0,0) -- (shi2);
		\end{scope}}
		\foreach \shi in {(0,0),(\epsx,0),(-\epsx,0)}
		{\begin{scope}[shift=\shi]
			\node (shi1) at (0,-1) {\phantom{$g$}};
			\node (shi2) at (0,1) {\phantom{$g$}};
			\draw[->] (shi1) -- (0,0);
		\end{scope}}
		\draw[densely dotted] (j1) -- (j2);
		\draw[->] (\epsx,-\epsx/2) -- (j21);
		\draw[->] (\epsx,\epsx/2) -- (j22);
		\draw[->] (j11) -- (-1*\epsx,-\epsx/2);
		\draw[->] (j12) -- (-1*\epsx,\epsx/2);
		\node at (3,.5)
		{$\frac{(\al+\nu q^{g})(\al q+\nu q^{g})}{(1+\al)(1+\al q)}$};
	\end{tikzpicture}

	\end{tabular}
\end{center}

Again, note the automatic vanishing of suitable probabilities
triggered by factors $1-q^{g}$ and $q-q^{g}$.

\subsection{$J=3$ vertex weights}

For $J=3$, there are 16 vertex types, so we will no longer draw
the arrow configurations. We will also
omit the common denominator $(1+\al)(1+q\al)(1+q^{2}\al)$
which is present in all probabilities.
The table of $J=3$ vertex weights is the following:
\begin{center}
	\begin{tabular}{c|c|c|c|c}
		&$j_2=0$&$j_2=1$&$j_2=2$&$j_2=3$
		\\
		\hline
		$j_1=0$&
		\parbox{.21\textwidth}
		{
		$(1+\al q^{g})(1+\al q^{g+1})\\\times(1+\al q^{g+2})$
		}
		&\parbox{.21\textwidth}
		{
		$\al(1+q+q^{2})(1+\al q^{g})\\\times(1+\al q^{g+1})(1-q^{g})$
		}
		&
		\parbox{.21\textwidth}
		{$\al^{2}(1+q+q^{2})\\\times(1 + \al q^g)\\\times(1 - q^g)(q - q^g)$}
		&
		\parbox{.21\textwidth}
		{$\al^{3}(1 - q^g)\\\times(q - q^g)(q^{2}-q^{g})$}
		\\
		\hline
		$j_1=1$&\parbox{.21\textwidth}
		{
		$(1-\nu q^{g})(1+\al q^{g+1})\\\times(1+\al q^{g+2})$
		}
		&\parbox{.21\textwidth}
		{
		$(1+\al q^{g+1})
		\\\times\bigg(
		\al(1+q+q^{2})
		\\{}\ -\al q^{g+1}(1+q-\al q)
		\\{}\ +\nu q^g(1-\al-\al q)
		\\{}\ +\al\nu q^{2g}(1+q+q^{2})
		\bigg)$
		}
		&\parbox{.21\textwidth}
		{
		$\alpha ^2 q \left(1+q+q^2\right)
		\\+
		\alpha q^{g} (\alpha  q-q-1)
		\\{}\ \times\left(-\nu +\alpha  q^2+\alpha  q\right)
		\\
		+\alpha q^{2g}
		(\alpha +\alpha  q-1)
		\\{}\ \times
		\left(\nu -\alpha  q^2+\nu  q\right)
		\\
		+\alpha ^2 \nu q^{3g} \left(1+q+q^{2}\right)$
		}
		&\parbox{.21\textwidth}
		{
		$\al^{2}(1-q^{g})(q-q^{g})\\\times
		(\al q^{2}+\nu q^{g})$
		}
		\\
		\hline
		$j_1=2$
		&\parbox{.21\textwidth}
		{
		$(1-\nu q^{g})(1-\nu q^{g+1})
		\\\times(1+\al q^{g+2})$
		}
		&\parbox{.21\textwidth}
		{
		$(1-\nu q^{g})\\\times
		\bigg(
		\al(1+q+q^{2})
		\\{}\ +
		\al q^{g+2}(\al+q\al-1)
		\\{}\ +
		\nu q^{g}(1+q-q\al)
		\\{}\ +
		\al\nu q^{2g+1}\\{}\ \ \ \ \times(1+q+q^{2})
		\bigg)$
		}
		&\parbox{.21\textwidth}
		{
		$(\al q+\nu q^{g})
		\\\times
		\bigg(
		\al(1+q+q^{2})
		\\{}\ -
		\al q^{g+1}(1+q-q\al)
		\\{}\ +
		\nu q^{g}(1-\al-q\al)
		\\{}\ +
		\al \nu q^{2g}(1+q+q^{2})
		\bigg)$
		}
		&\parbox{.21\textwidth}
		{
		$\al(1-q^{g})(\al q+\nu q^{g})\\\times(\al q^{2}+\nu q^{g})$
		}
		\\
		\hline
		$j_1=3$
		&\parbox{.21\textwidth}
		{
		$(1-\nu q^{g})(1-\nu q^{g+1})\\\times(1-\nu q^{g+2})$
		}
		&\parbox{.21\textwidth}
		{
		$(1+q+q^{2})(\al+\nu q^{g})\\\times
		(1-\nu q^{g})(1-\nu q^{g+1})$
		}
		&\parbox{.21\textwidth}
		{
		$(1+q+q^{2})(\al+\nu q^{g})\\\times
		(\al q+\nu q^{g})(1-\nu q^{g})$
		}
		&\parbox{.21\textwidth}
		{
		$(\al+\nu q^{g})
		(\al q+\nu q^{g})\\\times
		(\al q^{2}+\nu q^{g})$
		}
	\end{tabular}
\end{center}

\section{Yang-Baxter equation}\label{sec:YBE}

Denote, for this section,
$\bernw{m,n}{q}{\nu}{\alpha_1,\alpha_2}(k_1,k_2;k_1',k_2') = \bernw{1}{q}{\nu}{\alpha_1}(m,k_1; \ell ,k_1')\bernw{1}{q}{\nu}{\alpha_2}(\ell,k_2; n ,k_2')$
where $\ell = m+k_1-k_1'=n+k_2'-k_2$. Let $\bernwtilde{m,n}{q}{\nu}{\alpha_1,\alpha_2}(k_1,k_2;k_1',k_2')=\bernw{m,n}{q}{\nu}{\alpha_1,\alpha_2}(k_1',k_2';k_1,k_2)$ and define the matrix
$$
Y=
\left(
  \begin{array}{cccc}
    1 & 0 & 0 & 0 \\
    0 & -\frac{q(u_1-u_2)}{s u_1 u_2(u_1-q u_2)} & -\frac{1-q}{s u_1 (u_1-q u_2)} & 0 \\
    0 & -\frac{1-q}{s u_2 (u_1-q u_2)} & -\frac{u_1-u_2}{s u_1 u_2(u_1-q u_2)} & 0 \\
    0 & 0 & 0 & \frac{1}{s^2 u_1^2 u_2^2} \\
  \end{array}
\right)
$$
under the association $\alpha_i = -su_i$ and $\nu = s^2$. Then the Yang-Baxter equation amounts to the fact that $\bernw{m,n}{q}{\nu}{\alpha_2,\alpha_1}$ and $\bernwtilde{m,n}{q}{\nu}{\alpha_1,\alpha_2}$ (note the interchange of indices in $\alpha_1,\alpha_2$) are similar with respect to $Y$:
$$
\bernw{m,n}{q}{\nu}{\alpha_2,\alpha_1}Y = Y \bernwtilde{m,n}{q}{\nu}{\alpha_1,\alpha_2}.
$$
This can be derived, in light of Remark \ref{rem:BorR}, from \cite[Proposition 2.5]{Borodin2014vertex}, which itself is just a restatement of the standard six-vertex Yang-Baxter equation.
%


\begin{thebibliography}{alpha}

\bibitem[ACQ11]{AmirCorwinQuastel2011}
G.~Amir, I.~Corwin, and J.~Quastel.
\newblock {Probability distribution of the free energy of the continuum
  directed random polymer in 1+ 1 dimensions}.
\newblock {\em Commun. Pure Appl. Math.}, 64(4):466--537, 2011.
\newblock arXiv:1003.0443 [math.PR].

\bibitem[Bar14]{Barraquand_qTASEP_2014}
G.~Barraquand.
\newblock {A phase transition for q-TASEP with a few slower particles}.
\newblock {\em Stochastic Processes and their Applications}, 125(7):2674--2699, 2015.
\newblock arXiv:1404.7409 [math.PR].

\bibitem[BC13]{BorodinCorwin2013discrete}
A.~Borodin and I.~Corwin.
\newblock {Discrete time q-TASEPs}.
\newblock {\em Intern. Math. Research Notices}, 2013.
\newblock arXiv:1305.2972 [math.PR], doi: 10.1093/imrn/rnt206.

\bibitem[BC14]{BorodinCorwin2011Macdonald}
A.~Borodin and I.~Corwin.
\newblock Macdonald processes.
\newblock {\em Probab. Theory Rel. Fields}, 158:225--400, 2014.
\newblock arXiv:1111.4408 [math.PR].

\bibitem[BC15]{BarrCorwinqHahn}
G.~Barraquand and I.~Corwin.
\newblock {The $q$-Hahn asymmetric exclusion process}.
\newblock {\em Annals of Applied Probability}, 26(4):2304--2356, 2016.
\newblock arXiv:1501.03445 [math.PR].

\bibitem[BCF12]{BorodinCorwinFerrari2012}
A.~Borodin, I.~Corwin, and P.~Ferrari.
\newblock {Free energy fluctuations for directed polymers in random media in 1+
  1 dimension}.
	\newblock {\em Commun. Math. Phys.}, 67(7):1129--1214, 2014.
\newblock arXiv:1204.1024 [math.PR].

\bibitem[BCFV14]{BorodinCorwinFerrariVeto2014}
A.~Borodin, I.~Corwin, P.~Ferrari, and B.~Veto.
\newblock {Height fluctuations for the stationary KPZ equation}.
\newblock {\em Mathematical Physics, Analysis and Geometry}, 18(1):1--95, 2015.
\newblock arXiv:1407.6977 [math.PR].

\bibitem[BCG14]{BCG6V}
A.~Borodin, I.~Corwin, and V.~Gorin.
\newblock Stochastic six-vertex model.
\newblock {\em Duke J. Math.}, 165(3):563--624, 2016.
\newblock arXiv:1407.6729 [math.PR].

\bibitem[BCPS14]{BCPS2014}
A.~Borodin, I.~Corwin, L.~Petrov, and T.~Sasamoto.
\newblock {Spectral theory for interacting particle systems solvable by
  coordinate Bethe ansatz}.
	\newblock {\em Commun. Math. Phys.}, 339(3):1167-1245, 2015.
	\newblock arXiv:1407.8534 [math-ph]. Erratum (2018) available at \url{https://storage.lpetrov.cc/research_files/Petrov-publ/erratum_1407.pdf}
	(to appear in Commun. Math. Phys.).

\bibitem[BCR12]{BorodinCorwinRemenik}
A.~Borodin, I.~Corwin, and D.~Remenik.
\newblock {Log-Gamma polymer free energy fluctuations via a Fredholm
  determinant identity}.
	\newblock {\em Commun. Math. Phys.}, 324(1):215--232, 2013.
	\newblock arXiv:1206.4573 [math.PR].

\bibitem[BCS12]{BorodinCorwinSasamoto2012}
A.~Borodin, I.~Corwin, and T.~Sasamoto.
\newblock {From duality to determinants for q-TASEP and ASEP}.
\newblock {\em Ann. Probab.}, 42(6):2314--2382, 2014
\newblock arXiv:1207.5035 [math.PR].

\bibitem[BG97]{bertini1997}
L.~Bertini and G.~Giacomin.
\newblock {Stochastic Burgers and KPZ equations from particle systems}.
\newblock {\em Commun. Math. Phys.}, 183(3):571--607, 1997.

\bibitem[Bor14]{Borodin2014vertex}
A.~Borodin.
\newblock On a family of symmetric rational functions.
\newblock {\em Adv. Math.}, 306:973--1018, 2017.
\newblock arXiv:1410.0976 [math.CO].

\bibitem[BP13]{BorodinPetrov2013NN}
A.~Borodin and L.~Petrov.
\newblock {Nearest neighbor Markov dynamics on Macdonald processes}.
\newblock {\em Adv. Math.}, 300:71--155, 2016.
\newblock arXiv:1305.5501 [math.PR]

\bibitem[CGRS14]{CarinciGiardinaRedigSasamoto2014}
G.~Carinci, C.~Giardina, F.~Redig, and T.~Sasamoto.
\newblock {A generalized Asymmetric Exclusion Process with
  $U_q(\mathfrak{sl}_2)$ stochastic duality}.
	\newblock {\em Probab. Theory Relat. Fields}, 166(3):887--933, 2016.
\newblock arXiv:1407.3367 [math.PR].

\bibitem[Cor14]{Corwin2014qmunu}
I.~Corwin.
\newblock {The $q$-Hahn Boson process and $q$-Hahn TASEP}.
\newblock {\em Intern. Math. Research Notices}, rnu094, 2014.
\newblock arXiv:1401.3321 [math.PR].

\bibitem[COSZ14]{COSZ2011}
I.~Corwin, N.~O'Connell, T.~Sepp{\"a}l{\"a}inen, and N.~Zygouras.
\newblock {Tropical Combinatorics and Whittaker functions}.
\newblock {\em Duke J. Math.}, 163(3):513--563, 2014.
\newblock arXiv:1110.3489 [math.PR].

\bibitem[CP15]{CorwinPetrov2013}
I.~Corwin and L.~Petrov.
\newblock {The q-pushASEP: A new integrable model for traffic in 1+1
  dimension}.
	\newblock {\em J. Stat. Phys.}, 160(4):1005--1026, 2015.
\newblock arXiv:1308.3124 [math.PR].

\bibitem[CP19]{CP_erratum}
I.~Corwin and L.~Petrov.
\newblock {Erratum to ``Stochastic higher spin vertex models on the line''}.
\newblock 2019.
\newblock Available at \url{https://storage.lpetrov.cc/research_files/Petrov-publ/erratum_1502.pdf}.

\bibitem[CSS14]{CorwinSeppalainenShen2014}
I.~Corwin, T.~Sepp\"{a}l\"{a}inen, and H.~Shen.
\newblock {The strict-weak lattice polymer}.
\newblock {\em J. Stat. Phys.}, 160(4):1027--1053, 2015.
\newblock arXiv:1409.1794 [math.PR].

\bibitem[Fad96]{Fadeev1996}
L.D. Faddeev.
\newblock {How Algebraic Bethe Ansatz works for integrable model}.
\newblock {\em Les-Houches lecture notes}, 1996.
\newblock arXiv:1407.3367 [math.PR].

\bibitem[FV13]{FerrariVeto2013}
P.~Ferrari and B.~Veto.
\newblock {Tracy-Widom asymptotics for q-TASEP}.
\newblock {\em Ann. Inst. Hen. Poin.}, 51(4):1465--1485, 2015.
\newblock arXiv:1310.2515 [math.PR].

\bibitem[GS92]{GwaSpohn1992}
L-H. Gwa and H.~Spohn.
\newblock {Bethe solution for the dynamical-scaling exponent of the noisy
  Burgers equation}.
\newblock {\em Phys. Rev. A}, 46:844--854, 1992.

\bibitem[IS11]{ImamuraSasamoto_2011}
T.~Imamura and T.~Sasamoto.
\newblock {Current moments of 1D ASEP by duality}.
\newblock {\em J. Stat. Phys.}, 142:919--930, 2011.

\bibitem[KR87]{KirillovReshetikhin1987Fusion}
A.N. Kirillov and N.Y. Reshetikhin.
\newblock {Exact solution of the integrable XXZ Heisenberg model with arbitrary
  spin. I. The ground state and the excitation spectrum}.
\newblock {\em J. Phys. A}, 20(6):1565--1585, 1987.

\bibitem[KS96]{Koekoek1996}
R.~Koekoek and R.F. Swarttouw.
\newblock {The Askey-scheme of hypergeometric orthogonal polynomials and its
  q-analogue}.
\newblock Technical report, Delft University of Technology and Free University
  of Amsterdam, 1996.

\bibitem[Lie67]{Lieb67}
E.H. Lieb.
\newblock {The Residual Entropy of Square Ice}.
\newblock {\em Phys. Rev.}, 162:162--172, 1967.

\bibitem[Lin2019]{Lin2019}
	Y. Lin.
	\newblock {Markov Duality for Stochastic Six Vertex Model}
	\newblock 2019.
	\newblock arXiv:1901.00764 [math.PR].

\bibitem[Man14]{Mangazeev2014}
Vladimir~V. Mangazeev.
\newblock {On the Yang-Baxter equation for the six-vertex model}.
\newblock {\em Nucl. Phys. B}, 882:70--96, 2014.
\newblock arXiv:1401.6494.


\bibitem[O'C12]{Oconnell2009_Toda}
N.~O'Connell.
\newblock {Directed polymers and the quantum Toda lattice}.
\newblock {\em Ann. Probab.}, 40(2):437--458, 2012.
\newblock arXiv:0910.0069 [math.PR].

\bibitem[OO14]{OConnellOrthmann2014}
N.~O'Connell and J.~Ortmann.
\newblock {Tracy-Widom asymptotics for a random polymer model with
  gamma-distributed weights}.
	\newblock {\em Electron. J. Probab.}, 20(25):1--18, 2015.
\newblock arXiv:1408.5326 [math.PR].

\bibitem[OY01]{OConnellYor2001}
N.~O'Connell and M.~Yor.
\newblock {Brownian analogues of Burke's theorem}.
\newblock {\em Stoch. Proc. Appli.}, 96(2):285--304, 2001.

\bibitem[Pov13]{Povolotsky2013}
A.~Povolotsky.
\newblock {On integrability of zero-range chipping models with factorized
  steady state}.
\newblock {\em J. Phys. A: Math. Theor.}, 46:465205, 2013.
\newblock arXiv:1308.3250 [math-ph].

\bibitem[Res08]{Reshetikhin2008}
N.~Reshetikhin.
\newblock {Lectures on the integrability of the 6-vertex model}.
\newblock {\em Les-Houches lecture notes}, 2008.
\newblock arXiv:1407.3367 [math.PR].

\bibitem[RP81]{rogerspitman1981}
L.~C.~G. Rogers and J.~W. Pitman.
\newblock {Markov Functions}.
\newblock {\em Ann. Probab.}, 9(4):573--582, 1981.

\bibitem[Sch97]{Schutz_1997}
G.M. Sch\"{u}tz.
\newblock {Duality relations for asymmetric exclusion processes}.
\newblock {\em J. Stat. Phys.}, 86:1265–1287, 1997.

\bibitem[Sep12]{seppalainen2012}
T.~Sepp\"{a}l\"{a}inen.
\newblock {Scaling for a one-dimensional directed polymer with boundary
  conditions}.
\newblock {\em Ann. Probab.}, 40(1):19--73, 2012.
\newblock arXiv:0911.2446 [math.PR].

\bibitem[SS10]{SasamotoSpohn2010}
T.~Sasamoto and H.~Spohn.
\newblock {Exact height distributions for the KPZ equation with narrow wedge
  initial condition}.
\newblock {\em Nucl. Phys. B}, 834(3):523--542, 2010.
\newblock arXiv:1002.1879 [cond-mat.stat-mech].

\bibitem[SS14]{SasamotoSpohn_2014}
T.~Sasamoto and H.~Spohn.
\newblock {Point-interacting Brownian motions in the KPZ universality class}.
\newblock {\em Electron. J. Probab.}, 20(87):1--28, 2015.
\newblock arXiv:1411.3142 [math.PH].

\bibitem[SW98]{SasamotoWadati1998}
T.~Sasamoto and M.~Wadati.
\newblock {Exact results for one-dimensional totally asymmetric diffusion
  models}.
\newblock {\em J. Phys. A}, 31:6057--6071, 1998.

\bibitem[TLD14]{LEDoussalandStudent}
T.~Thiery and P.~Le~Doussal.
\newblock {Log-gamma directed polymer with fixed endpoints via the replica
  Bethe Ansatz}.
\newblock {\em J. Stat. Mech.}, 2014(10):P10018, 2014.

\bibitem[TW08]{TW_ASEP1}
C.~Tracy and H.~Widom.
\newblock {Integral formulas for the asymmetric simple exclusion process}.
\newblock {\em Commun. Math. Phys.}, 279:815--844, 2008.
\newblock arXiv:0704.2633 [math.PR]. Erratum: Commun. Math. Phys.,
  304:875--878, 2011.

\bibitem[TW09]{TW_ASEP2}
C.~Tracy and H.~Widom.
\newblock {Asymptotics in ASEP with step initial condition}.
\newblock {\em Commun. Math. Phys.}, 290:129--154, 2009.
\newblock arXiv:0807.1713 [math.PR].

\bibitem[Vet14]{Veto2014qhahn}
B.~Veto.
\newblock {Tracy--Widom limit of q-Hahn TASEP}.
\newblock {\em Electron. J. Probab.}, 20(102):1--22, 2015.
\newblock arXiv:1407.2787 [math.PR].

\end{thebibliography}

\end{document}